\newtheorem{thm}{Theorem}
\newtheorem{defn}[thm]{Definition}
\newtheorem{rem}[thm]{Remark}
\newtheorem{cor}[thm]{Corollary}
\newtheorem{prop}[thm]{Proposition}
\newtheorem{lem}[thm]{Lemma}
\renewcommand{\geq}{\geqslant}
\begin{document}

\title{\bf{\LARGE{{Explicit expression for the generating function\\counting Gessel's walks}}}}

\author{Irina Kurkova\footnotemark[1]
        \and
        Kilian Raschel\footnotemark[1] }

\date{October 5, 2010}

\maketitle

\footnotetext[1]{Laboratoire de Probabilit\'es et
        Mod\`eles Al\'eatoires, Universit\'e Pierre et Marie Curie,
        4 Place Jussieu, 75252 Paris Cedex 05, France.
        E-mails: \texttt{irina.kourkova@upmc.fr},
                 \texttt{kilian.raschel@upmc.fr}}

\vspace{-6mm}

\begin{abstract}
Gessel's walks are the planar walks that move within the positive quadrant $\mathbb{Z}_{+}^{2}$ by unit steps in any of the following directions: West, North-East, East and South-West. In this paper, we find an explicit expression for the trivariate generating function counting the Gessel's walks with $k\geq 0$ steps, which start at $(0,0)$ and end at a given point $(i,j) \in \mathbb{Z}^2_+$.
\end{abstract}

\vspace{3mm}

\noindent {\it Keywords: lattice walks, generating function,
Riemann boundary value problem, conformal gluing function,
Weierstrass elliptic function, uniformization, Riemann surface}

\vspace{1.5mm}

\noindent {\it AMS $2000$ Subject Classification: primary 05A15; secondary 30F10, 30D05}

\section{Introduction}
\label{Introduction}

The enumeration of lattice walks is a classical problem in
combinatorics and this article is about the special case of Gessel's walks.
These are the planar walks that move within the positive quadrant $\mathbb{Z}_{+}^{2}$ by unit steps in any of the following directions: West, North-East, East and South-West. To be more
precise, let the walk be of length $k\geq 0$ and end at $(i,j) \in \mathbb{Z}_{+}^{2}$:
if $i,j>0$ then the next step can either be at $(i-1,j)$, $(i+1,j+1)$,
$(i+1,j)$ or $(i-1,j-1)$; if $i>0$ and $j=0$
it can be at $(i-1,0)$, $(i+1,1)$ or $(i+1,0)$; if $i=0$ and $j\geq0$ it can be
at $(1,j+1)$ or $(1,j)$. This is illustrated on
Figure~\ref{Steps_Gessel} below.

\begin{figure}[!ht]
\begin{center}
\begin{picture}(65.00,65.00)
\includegraphics{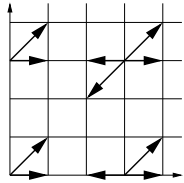}
\end{picture}
\end{center}
\vspace{-3mm}
\caption{Steps in Gessel's walks}
\label{Steps_Gessel}
\end{figure}
For $(i,j) \in \mathbb{Z}_{+}^{2}$ and $k\geq 0$, let
     \begin{equation*}
          q(i,j,k)=\#\big\{\text{Gessel's walks of length } k \text{ starting at}\ (0,0)\ \text{and
          ending at}\ (i,j)\big\}.
     \end{equation*}
%Gessel's walks have puzzled the mathematics community since 2001,
%when Ira Gessel~conjectured that for any $k\geq 0$,
%$q(0,0,2k)=16^{k} \big[(5/6)_{k} (1/2)_{k}\big]\big/\big[(2)_{k}
%(5/3)_{k}\big]$, where $(a)_{k}=a(a+1)\cdots (a+k-1)$. Only in
%$2009$ did Kauers, Koutschan and Zeilberger \cite{KKZ} give a proof
%of this conjecture.
% POUR EVITER TOUTE POLEMIQUE POURQUOI NE PAS ECRIRIRE DE MANIERE PLATE SANS SAVEUR ?
These walks are named after Ira Gessel, who
conjectured in 2001 that for any $k\geq 0$, $q(0,0,2k)=16^{k} \big[(5/6)_{k}
(1/2)_{k}\big]\big/\big[(2)_{k} (5/3)_{k}\big]$, where
$(a)_{k}=a(a+1)\cdots (a+k-1)$. This was proven in 2009 by Kauers, Koutschan and
Zeilberger \cite{KKZ}.

Let $Q(x,y,z)$ be the trivariate generating function counting Gessel's walks:
\begin{equation*}
     \label{definition_generating_function}
          Q(x,y,z)=\smashoperator{\sum_{i,j,k\geq 0}}q(i,j,k)x^{i}y^{j}z^{k}.
     \end{equation*}
     This series is entirely characterized
     by the generating functions $Q(x,0,z)$, $Q(0,y,z)$
and $Q(0,0,z)$---which count the walks that end on the borders of the
quadrant---via the following functional equation:
     \begin{equation}
     \label{functional_equation}
x y z \left(\frac{1}{x}+\frac{1}{x y}+x + x y -\frac{1}{z}\right)
Q(x,y,z)=zQ(x,0,z)+z(y+1)Q(0,y,z)-zQ(0,0,z)-x y.
     \end{equation}
This equation, a classical result \cite{BK,BMM}, is valid \text{a priori}
in the domain $\{(x,y,z): \ |x|\leq 1,|y|\leq 1, |z|<1/4\}$.

  In this article, with the help of complex analysis methods we obtain closed expressions for
   $Q(x,0,z)$, $Q(0,y,z)$ and $Q(0,0,z)$ from
  exploiting the functional equation
(\ref{functional_equation}).

By using computer calculations, Bostan and Kauers \cite{BK} showed that
$Q(x,y,z)$~is algebraic and found the minimal polynomials of
$Q(x,0,z)$, $Q(0,y,z)$ and~$Q(0,0,z)$---by ``minimal polynomial'' of
an algebraic function $F(u,z)$, we mean the unique monic polynomial
with coefficients in $\mathbb{C}[u,z]$ dividing any polynomial which
vanishes at $F(u,z)$. Van Hoeij \cite{BK} then managed to compute $Q(x,0,z)$,
$Q(0,y,z)$ and $Q(0,0,z)$ as their roots.
% and via (\ref{functional_equation}) thus
%also $Q(x,y,z)$.

Furthermore, Gessel's model is one of the $2^8$ models of walks in
the quarter plane $\mathbb{Z}_{+}^{2}$ starting from the origin and
allowing steps in the interior according to a given subset of
$\{-1,0,1\}^{2} \setminus \{(0,0)\}$. Bousquet-M\'elou and Mishna
\cite{BMM} provided a systematic analysis of the enumeration of
these walks.  After eliminating trivial models and those already
solved \cite{MBM2} by reduction to walks in a half-plane (a simpler problem),
$79$ inherently different models remained. Following an idea
of \cite{FIM}, they associated each of them with  a group of birational
transformations of $\mathbb{C}^{2}$---for details about this group see
Subsection~\ref{Subsection_Uniformization} of this article. The
group is finite in $23$ cases and infinite in the $56$ others.
Through functional equations analogous to
(\ref{functional_equation}), they found the generating functions
%counting walks that end on the borders
for $22$ of the models with an
underlying finite group. The following property has been
crucial to their analysis: in the (half-)orbit sum of the trivariate
generating function, all terms, except for the one corresponding to
the identity element of the group, have a positive part in $x$ or in
$y$ equal to zero. This last property is not valid for Gessel's
walks and consequently, Gessel's model is the only one with a finite
group that is not solved in \cite{BMM}.

 Our method of applying (\ref{functional_equation}) is notably different and can be generalized
to all $79$ walks described above \cite{groupinfinite}. It heavily relies on the analytic approach
developed by Fayolle, Iasnogorodski and Malyshev \cite{FIM} and
proceeds by reduction to boundary value problems (BVPs) of Riemann-Carleman
type.
    In the rest of the introduction we sketch this approach
 and explain how we adapt it to the enumeration of Gessel's walks.

    The authors of \cite{FIM} find explicit expressions for the generating functions of
the stationary distributions $(\pi_{i,j})_{i,j\geq 0}$ of some
ergodic random walks in the positive quadrant $\mathbb{Z}_{+}^{2}$,
supposed to have four domains of spatial homogeneity: the interior
$\{(i,j): i,j>0\}$, the real axis $\{(i,0): i>0\}$, the imaginary
axis $\{(0,j): j>0\}$ and the origin $\{(0,0)\}$. In the interior,
the only possible non-zero jump probabilities correspond to the
eight nearest neighbors.

First, they reduce the problem to solving the following functional
equation on $\{(x,y): \ |x|\leq 1, |y|\leq 1\}$:
  \begin{equation}
  \label{efim}
       K(x,y)\Pi(x,y)=k(x,y)\pi(x)+\widetilde{k}(x,y)\widetilde{\pi}(y)+
       k_{0}(x,y)\pi_{0,0},
  \end{equation}
where the polynomials $K(x,y)$, $k(x,y)$, $\widetilde{k}(x,y)$ and
$k_{0}(x,y)$ are known, while
%$K(x,y)$ being a second degree polynomial in $x$ and $y$,
the functions $\Pi(x,y)=\sum_{i,j\geq 1}\pi_{i,j}x^{i-1}y^{j-1}$,
$\pi(x)=\sum_{i\geq 1}\pi_{i,0}x^{i-1}$ as well as $\widetilde{\pi}(y)=\sum_{j\geq
1}\pi_{0,j}y^{j-1}$ are unknown but holomorphic in their unit disc;
the constant $\pi_{0,0}$ is unknown as well.

Second, they  continue the functions $\pi(x)$ and
$\widetilde{\pi}(y)$ meromorphically (with poles that can be
identified) to the whole complex plane cut along some segments, see
Chapter~3 of \cite {FIM}.

Third, they prove that both unknown meromorphic functions $\pi(x)$
and $\widetilde{\pi}(y)$ satisfy boundary value conditions of
Riemann-Carleman type, see (5.1.5) on page 95 of \cite{FIM}. Using
information on the poles of $\pi(x)$ and $\widetilde{\pi}(y)$,
  they reduce the problems to finding some new holomorphic
   functions that are solutions to BVPs of the same type, see pages 119--124, and
   particularly (5.4.10).
If the index $\chi$ of these BVPs is non-negative (which actually is
the generic situation considered in \cite{FIM}), their solutions are
not unique but depend on $\chi+1$ arbitrary constants (for  the
notion of the index see (5.2.7) on page 98 or (5.2.42) on page 108).
Consequently, in Part 5.4, the authors of \cite{FIM} reduce the BVPs
above to finding holomorphic functions satisfying some new BVPs of
Riemann-Carleman type with an index $\chi=-1$. Finally, these last
problems are uniquely solved by converting them into BVPs of
Riemann-Hilbert type, see Theorem~5.2.8 on page 108 of \cite{FIM}.
The functions $\pi(x)$ and $\widetilde{\pi}(y)$  can be
reconstructed from these solutions; the constant $\pi_{0,0}$ is
computed from the fact that $\sum_{i,j\geq 0}\pi_{i,j}=1$.

%% J'AI REPRIS LA PARAGRAPG PRECEDENT, PAR CE QUE  C'ETAIT FAUX

%poles that can be identified) up to the whole complex plane cut
%along some segments. This ingenious continuation procedure is the
%crucial step of \cite{FIM}.

%\medskip

Compared to (\ref{efim}), our equation (\ref{functional_equation})
seems somewhat more difficult to analyse, since~it involves an additional
parameter $z$. On the other hand, the unknowns
$zQ(x,0,z)$,~$z(y+1)Q(0,y,z)$ and $z Q(0,0,z)$
of (\ref{functional_equation}) have constant coefficients,
unlike $\pi(x)$, $\widetilde{\pi}(y)$ and $\pi_{0,0}$~in (\ref{efim}).
% $k(x,y)$, multiplied by
%$\widetilde{k}(x,y)$ and $k_{0}(x,y)$.
%As we
%will see later,
This fact implies (see Subsection \ref{CONTINUATION}) that $zQ(x,0,z)$ and $z(y+1)Q(0,y,z)$~can~be
continued  to whole cut complex planes as
holomorphic and not only meromorphic functions.
 It
also entails that $zQ(x,0,z)$ and $z(y+1)Q(0,y,z)$ satisfy BVPs of
Riemann-Carleman type with an index $\chi=0$, whose solutions are
unique, up to additive constants.
  Then, unlike \cite{FIM},
 we don't transform these problems anymore
  but we solve them directly.
 Their solutions uniquely determine
  the functions $zQ(x,0,z)-zQ(0,0,z)$ and
$z(y+1) Q(0,y,z)-z Q(0,0,z)$. The quantity $Q(0,0,z)$  is then
found easily, e.g.\ from (\ref{functional_equation}) by making the substitution
$(x,y,z)=(0,-1,z)$---which is such that the left-hand
side of (\ref{functional_equation}) vanishes---see also
Remark~\ref{sdf}. Finally, $Q(x,y,z)$ is determined via
(\ref{functional_equation}).

%Compared to (\ref{efim}), our equation (\ref{functional_equation})
%seems a bit more difficult to analyse, as it involves an additional
%parameter $z$. On the other hand, the coefficients $k(x,y)$,
%$\widetilde{k}(x,y)$ and $k_{0}(x,y)$ in front of the unknowns
%$zQ(x,0,z)$, $z(y+1)Q(0,y,z)$ and $z Q(0,0,z)$ are absent. This will
%allow us to continue $zQ(x,0,z)$ and $z(y+1)Q(0,y,z)$ as holomorphic
%and not only meromorphic functions. Furthermore, due to this fact,
%these functions satisfy boundary value problems of Riemann-Carleman
%type that belong to a special class of Carleman-Dirichlet problems.
%The index of these problems being zero, their solutions
%are unique,
% up to an additive constant.
%  Contrary to \cite{FIM},
% we shall not transform these problems anymore
%  but solve them directly.
% Their solutions determine
% uniquely the functions $zQ(x,0,z)-zQ(0,0,z)$ and
%$z(y+1) Q(0,y,z)-z Q(0,0,z)$.
% The function $Q(0,0,z)$ is then found easily
%by substituting in (\ref{functional_equation}) any triplet $(\hat
%x,\hat y, \hat z)$ such that $L(\hat x, \hat y, \hat z)=0$,
% e.g.\ $(0,-1,z)$, see Remark~\ref{sdf} above. Finally
%$Q(x,y,z)$ is determined via (\ref{functional_equation}).

\section{Reduction  to  boundary  value  problems of Riemann-Carle- man type}
\label{reduc}

\noindent{\bf Assumption.}
 In the sequel, we will suppose  that $z$ is fixed in
$]0,1/4[$.

\medskip

Before we state our main results, we must have a closer look at the
kernel
$
 L(x,y,z)=
x y z \big[1/x+1/(x y)+x + x y -1/z\big]
$
 that appears in (\ref{functional_equation}) and introduce
some notations. The polynomial $L(x,y,z)$ can be written as
     \begin{equation*}
          L(x,y,z)=\widetilde{a}(y,z)x^{2}+\widetilde{b}(y,z)x+
          \widetilde{c}(y,z)=a(x,z)y^{2}+b(x,z)y+c(x,z),
     \end{equation*}
where
%     \begin{align*}
%          \widetilde{a}(y,z)&=z y (y+1),& \widetilde{b}(y,z)&=-y, &\widetilde{c}(y,z)&=z(y+1),\\
%          a(x,z)&=z x^{2},& b(x,z)&=z x^{2}-x+z,&c(x,z)&=z.
%     \end{align*}
$\widetilde{a}(y,z)=z y (y+1)$,
$\widetilde{b}(y,z)=-y$, $\widetilde{c}(y,z)=z(y+1)$ and $a(x,z)=z x^{2}$, $b(x,z)=$ $z x^{2}-
x+z$, $c(x,z)=z$.
Define also
%$\widetilde{d}(y,z)=\widetilde{b}(y,z)^{2}-4\widetilde{a}(y,z)
%\widetilde{c}(y,z)$ and $d(x,z)=b(x,z)^{2}-4a(x,z)c(x,z)$.
     \begin{equation*}
       \widetilde{d}(y,z)=\widetilde{b}(y,z)^{2}-4\widetilde{a}(y,z)
          \widetilde{c}(y,z),\ \ \ \ \ d(x,z)=b(x,z)^{2}-4a(x,z)c(x,z).
     \end{equation*}

We have $L(x,y,z)=0$ if and only if $[\widetilde{b}(y,z)+
2\widetilde{a}(y,z)x]^{2}=\widetilde{d}(y,z)$ or equivalently
$[b(x,z)+2a(x,z)y]^{2}=d(x,z)$. In particular, the algebraic
functions $X(y,z)$ and $Y(x,z)$ defined by $L(X(y,z),y,z)=0$ and
$L(x,Y(x,z),z)=0$ respectively have two branches, namely
     \begin{align*}
          X_{0}(y,z)&=[-\widetilde{b}(y,z)+ \widetilde{d}(y,z)^{1/2}]
          /[2\widetilde{a}(y,z)],& X_{1}(y,z)&=[-\widetilde{b}(y,z)-
          \widetilde{d}(y,z)^{1/2}]/[2\widetilde{a}(y,z)],\\
          Y_{0}(x,z)&=[-b(x,z)+d(x,z)^{1/2}]/[2a(x,z)],
          & Y_{1}(x,z)&=[-b(x,z)-d(x,z)^{1/2}]/[2a(x,z)].
     \end{align*}

%\textit{From now on, we are going to suppose that $z$ is a f fixed,
%$z\in ]0,1/4[$}.
For any $z\in]0,1/4[$, the polynomial $\widetilde{d}$ has one root equal to zero, say
$y_{1}(z)=0$, as well as two real positive roots, that we denote by
$y_{2}(z)=[1-8z^{2}-(1-16z^{2})^{1/2}]/[8z^{2}]$ and
$y_{3}(z)=[1-8z^{2}+ (1-16z^{2})^{1/2}]/[8z^{2}]$.
 We have
$0<y_{2}(z)<1<y_{3}(z)$. We also note $y_{4}(z)=\infty$. The points
$y_{k}(z)$, $k\in\{1,\ldots ,4\}$ are the four branch points of the
algebraic function $X(y,z)$.

% Intervertir les paragraphes

Likewise, for all $z\in ]0,1/4[$, $d$ has four real positive roots,
that we denote by $x_{1}(z)=[1+2z-(1+4z)^{1/2}]/[2z]$,
$x_{2}(z)=[1-2z-(1-4z)^{1/2}]/[2z]$,
$x_{3}(z)=[1-2z+(1-4z)^{1/2}]/[2z]$ and
$x_{4}(z)=[1+2z+(1+4z)^{1/2}]/[2z]$. We have
$0<x_{1}(z)<x_{2}(z)<1<x_{3}(z)<x_{4}(z)$. The points $x_{k}(z)$,
$k\in\{1,\ldots ,4\}$ are the four branch points of the algebraic
function $Y(x,z)$.

We now present some properties of the two branches of both
$X(y,z)$ and $Y(x,z)$.

     \begin{lem}
     \label{Properties_X_Y_0}
          The following properties hold.
\begin{enumerate}
\item $X_{k}(y,z)$, $k\in\{0,1\}$ are meromorphic on
$\mathbb{C}\setminus([y_{1}(z),y_{2}(z)]\cup
[y_{3}(z),y_{4}(z)])$.  On the latter domain,
          $X_{0}$ has a simple zero at $-1$, no other zero and no pole; $X_{1}$ has a
          simple pole at $-1$, no other pole and no zero. Moreover, both $X_{0}$ and $X_{1}$
          become infinite at $y_{1}(z)=0$ and zero at $y_{4}(z)=\infty$.

\item For all $y\in\mathbb{C}$, we have $|X_{0}(y,z)|\leq
|X_{1}(y,z)|$.

\item $Y_{k}(x,z)$, $k\in\{0,1\}$ are meromorphic on
$\mathbb{C}\setminus([x_{1}(z),x_{2}(z)]\cup
[x_{3}(z),x_{4}(z)])$.
          On the latter domain, $Y_{0}$ has a double zero at $\infty$, no other zero and no pole; $Y_{1}$ has
          a double pole at $0$, no other pole and no zero.

\item For all $x\in\mathbb{C}$, we have $|Y_{0}(x,z)|\leq
|Y_{1}(x,z)|$.
\end{enumerate}
     \end{lem}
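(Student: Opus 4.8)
The plan is to treat items (i)--(ii) and (iii)--(iv) symmetrically, working directly from the explicit radical formulas for the branches. I will write $X_{0,1}(y,z) = [-\widetilde b(y,z) \pm \widetilde d(y,z)^{1/2}]/[2\widetilde a(y,z)]$, with $\widetilde a(y,z) = zy(y+1)$, $\widetilde b(y,z) = -y$, $\widetilde c(y,z) = z(y+1)$, and $\widetilde d = \widetilde b^2 - 4\widetilde a\widetilde c$. First I would establish the meromorphy statement: the two branches of an algebraic function of degree $2$ are single-valued and meromorphic precisely on the complement of cuts joining the branch points in pairs, and the branch points are exactly the roots $y_1,\dots,y_4$ of $\widetilde d$ identified above (with $y_4 = \infty$), so the only thing to check is that the chosen cuts $[y_1,y_2]\cup[y_3,y_4]$ are the ones across which the determination of $\widetilde d^{1/2}$ actually jumps; this is settled by noting that $\widetilde d(y,z) > 0$ for real $y \notin [y_1,y_2]\cup[y_3,y_4]$ (a consequence of $0 = y_1 < y_2 < 1 < y_3 < y_4 = \infty$ and the sign pattern of the quartic $\widetilde d$), so one may fix the branch of the square root on the real line outside the cuts and continue. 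On this cut plane $2\widetilde a(y,z)$ vanishes only at $y = 0 = y_1$ (inside a cut, hence harmless) and at $y = -1$; evaluating $\widetilde b(-1,z) = 1$ and $\widetilde d(-1,z) = \widetilde b(-1,z)^2 = 1$ (since $\widetilde a(-1,z) = 0$), one sees that the numerator $-\widetilde b + \widetilde d^{1/2}$ of $X_0$ vanishes to first order at $y=-1$ while the numerator $-\widetilde b - \widetilde d^{1/2}$ of $X_1$ equals $-2 \neq 0$ there; combined with the simple zero of $2\widetilde a$ at $-1$, this gives the simple zero of $X_0$ and simple pole of $X_1$ at $-1$.

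Next I would pin down the behaviour at the remaining special points. For the zeros: $X_k(y,z) = 0$ forces $L(0,y,z) = 0$, i.e.\ $\widetilde c(y,z) = z(y+1) = 0$, so $y = -1$ is the only possible finite zero, and the product $X_0 X_1 = \widetilde c/\widetilde a = 1/y$ (by Vieta on $\widetilde a x^2 + \widetilde b x + \widetilde c$) shows neither branch vanishes except where forced; in particular as $y \to \infty$ we get $X_0 X_1 \to 0$, and since by item (ii) $|X_0| \le |X_1|$ we must have $X_0 \to 0$, and a direct expansion (or the relation $X_0 + X_1 = -\widetilde b/\widetilde a = 1/[z(y+1)] \to 0$) shows $X_1 \to 0$ as well, so both are zero at $y_4 = \infty$. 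For the poles at $y_1 = 0$: here $\widetilde a(0,z) = 0$ while $\widetilde b(0,z) = 0$ and $\widetilde c(0,z) = z \neq 0$, so from $X_0 X_1 = 1/y$ at least one branch blows up; from $X_0 + X_1 = -\widetilde b/\widetilde a$, which near $y = 0$ behaves like $y/(zy) = 1/z$ divided by $(y+1)$... more carefully $-\widetilde b/\widetilde a = 1/[z(y+1)] \to 1/z$ finite, so one branch tends to $\infty$ and the other to $-\infty$ compensating — hence both $X_0$ and $X_1$ become infinite at $y_1 = 0$. (One should be slightly careful that $y=0$ is an endpoint of a cut, so ``becomes infinite'' is meant in the sense of the limit along the cut plane.)

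For item (ii), the inequality $|X_0(y,z)| \le |X_1(y,z)|$ for all $y \in \mathbb{C}$, I would argue via the modulus of the product and sum, or more robustly as follows: the quantity $\widetilde d^{1/2}$ is, by the chosen determination, the one that is positive for real $y$ outside the cuts; there $-\widetilde b = y$ and one checks $\widetilde a = zy(y+1) > 0$ for $y > 0$ large, giving $|{-\widetilde b - \widetilde d^{1/2}}| \ge |{-\widetilde b + \widetilde d^{1/2}}|$ directly, and by the maximum principle applied to the ratio $X_0/X_1$ — which is holomorphic and bounded on the cut plane away from the zero of $X_1$ — the inequality propagates to all of $\mathbb{C}$; alternatively, and this is the cleaner route, one invokes that $X_0 X_1 = 1/y$ has modulus $1/|y|$ while... no, the truly clean argument is the standard one from \cite{FIM}: $X_0$ is the root that stays bounded (in fact $\le 1$ in modulus) on $|y| = 1$ by a Rouch\'e / kernel argument using that $(0,0,z)$-type walks have a probabilistic interpretation, and then $|X_0| \le |X_1|$ everywhere follows because $X_0 X_1$ and $X_0 + X_1$ are rational with controlled growth. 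I expect the main obstacle to be precisely this item (ii): establishing the global modulus inequality cleanly rather than by brute-force case analysis on the sign of $\widetilde d$ — the resolution is to localize to the boundary $|y| = 1$ (or to the real cuts), prove the inequality there by an elementary estimate on the explicit radicals, and then extend by the maximum modulus principle applied to the holomorphic function $X_0/X_1$ on the cut plane. Items (iii) and (iv) for $Y_0, Y_1$ are then handled by the verbatim same scheme: meromorphy on $\mathbb{C}\setminus([x_1,x_2]\cup[x_3,x_4])$ from the four real roots of $d$; the double pole of $Y_1$ at $x = 0$ and double zero of $Y_0$ at $x = \infty$ read off from $a(x,z) = zx^2$ having a double zero at $0$ and from the Vieta relations $Y_0 Y_1 = c/a = 1/x^2$ and $Y_0 + Y_1 = -b/a$; and $|Y_0| \le |Y_1|$ by the same boundary-plus-maximum-principle argument.
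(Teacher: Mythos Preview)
The paper does not actually prove this lemma: it simply notes that the arguments of Part~5.3 of \cite{FIM}, written there for $z=1/4$, extend without change to $z\in]0,1/4[$. Your proposal therefore goes well beyond what the paper does, attempting a direct verification from the radical formulas together with the Vieta relations $X_{0}X_{1}=\widetilde{c}/\widetilde{a}=1/y$ and $X_{0}+X_{1}=-\widetilde{b}/\widetilde{a}=1/[z(y+1)]$ (and the analogous ones for $Y_{0},Y_{1}$).

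Your treatment of (i) and (iii) is essentially correct and is the natural elementary route. Two cosmetic points: $\widetilde{d}$ is cubic, not quartic (this is precisely why $y_{4}=\infty$ is a branch point); and in (i) you invoke (ii) for the behaviour at $y_{4}=\infty$ before having proved (ii), which is circular but easily avoided, since $X_{0}+X_{1}\to 0$ and $X_{0}X_{1}\to 0$ already force both branches to vanish there without any appeal to the modulus ordering. For (ii) and (iv) you hesitate between several arguments; the maximum-modulus route you settle on at the end is the right one and works cleanly once stated properly: $X_{0}/X_{1}$ is holomorphic on the doubly-cut plane (item (i) gives that $X_{1}$ has no zero there, and its pole at $-1$ is cancelled by the zero of $X_{0}$), on each cut the two limiting branches are complex conjugates so $|X_{0}/X_{1}|=1$ on the boundary, and at the branch-point endpoints the ratio also has modulus tending to $1$; the maximum principle then gives $|X_{0}/X_{1}|\leq 1$ everywhere. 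The Rouch\'e/probabilistic aside is not needed. What your direct approach buys is self-containment; what the paper's citation buys is brevity and alignment with the general framework of \cite{FIM}, where these facts are established once for all kernels of this type.
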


%The following result is illustrated on Figure~\ref{Locations_of_the_curves}.

Neither $X_{k}(y,z)$, $k\in\{0,1\}$ is defined for $y$ in the
branch cuts $[y_{1}(z),y_{2}(z)]$ and $[y_{3}(z),y_{4}(z)]$.
However, the limits $X_{k}^{\pm} (y,z)$ defined by
$X_{k}^{+}(y,z)=\lim X_{k}(\hat{y},z)$ as $\hat{y}\to y$ from the
\textit{upper} side of the cuts and $X_{k}^{-}(y,z)=\lim
X_{k}(\hat{y},z)$ as $\hat{y} \to y$ from the \textit{lower} side of
the cuts are well defined. These two quantities are each other's
complex conjugate, since for $y$ in the branch cuts,
$\widetilde{d}(y,z)<0$. A similar remark holds for $Y_{k}(x,z)$,
$k\in\{0,1\}$ and $x$ in $[x_{1}(z),x_{2}(z)]$ or
$[x_{3}(z),x_{4}(z)]$. Precisely, for $y\in[y_{1}(z),y_{2}(z)]$ and
$x\in[x_{1}(z),x_{2}(z)]$, we have
     \begin{equation}
     \label{values_cuts}
          X_{0}^{\pm}(y,z)=\frac{-\widetilde{b}(y,z)\mp \imath
          \big[-\widetilde{d}(y,z)\big]^{1/2}}{2\widetilde{a}(y,z)},
          \ \ \ \ \ Y_{0}^{\pm}(x,z)=\frac{-b(x,z)\mp \imath
          \big[-d(x,z)\big]^{1/2}}{2a(x,z)},
     \end{equation}
$X_{1}^{\pm}(y,z)=X_{0}^{\mp}(y,z)$ and
$Y_{1}^{\pm}(x,z)=Y_{0}^{\mp}(x,z)$. Furthermore, the identities
(\ref{values_cuts}) are true for $y\in[y_{3}(z),y_{4}(z)]$ and
$x\in[x_{3}(z),x_{4}(z)]$ respectively if one exchanges
$X_{0}^{\pm}(y,z)$ in $X_{0}^{\mp}(y,z)$ and $Y_{0}^{\pm}(x,z)$ in
$Y_{0}^{\mp}(x,z)$.

In order to state our main results, we also need the
following lemma.
     \begin{lem}
     \label{Properties_curves_0}
        The curves $X([y_{1}(z),y_{2}(z)],z)$ and
        $Y([x_{1}(z),x_{2}(z)],z)$ satisfy the following properties.
     \begin{enumerate}
     \item      They are symmetrical w.r.t.\ the real axis and
                not included in the unit disc.

     \item      $X([y_{1}(z),y_{2}(z)],z)$ contains $\infty$ and
                $Y([x_{1}(z),x_{2}(z)],z)$ is closed.

     \item \label{incc}
                They split the plane $\mathbb{C}$ in two connected
                components.
                We call $\mathscr{G}X([y_{1}(z),y_{2}(z)],z)$
                and $\mathscr{G}Y([x_{1}(z),x_{2}(z)],z)$
                the ones of the
                 branch points
                $x_{1}(z)$ and $y_{1}(z)$ respectively. They are such that
                $[x_{1}(z),x_{2}(z)]\subset \mathscr{G}X([y_{1}(z),y_{2}(z)],z)
                \subset\mathbb{C}\setminus [x_{3}(z),x_{4}(z)]$
                as well as $[y_{1}(z),y_{2}(z)]\subset\mathscr{G}Y([x_{1}(z),x_{2}(z)],z)
                \subset \mathbb{C}\setminus [y_{3}(z),y_{4}(z)]$.
\end{enumerate}
     \end{lem}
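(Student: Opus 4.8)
The plan is to analyse the two branch cuts via the explicit parametrisation of the branches $X_{0}^{\pm}$ and $Y_{0}^{\pm}$ given in (\ref{values_cuts}), treating the $X$-curve and the $Y$-curve in parallel. For part (i), symmetry with respect to the real axis is immediate: as $y$ ranges over the segment $[y_{1}(z),y_{2}(z)]\subset\mathbb{R}$, the points $X_{0}^{+}(y,z)$ and $X_{0}^{-}(y,z)$ are complex conjugates (this is already noted in the text, since $\widetilde d(y,z)<0$ there), so the curve $X([y_{1}(z),y_{2}(z)],z)$ — which is by definition the union of these two arcs — is invariant under conjugation; the same argument applies verbatim to $Y([x_{1}(z),x_{2}(z)],z)$. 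To see that neither curve lies in the closed unit disc, I would evaluate at a convenient endpoint. At $y=y_{1}(z)=0$ we have $\widetilde a(0,z)=0$, so by Lemma \ref{Properties_X_Y_0}(i) both branches of $X$ become infinite there; hence $X([y_{1}(z),y_{2}(z)],z)$ passes through $\infty$, which both proves part (ii) for this curve and shows it is not inside the unit disc. For $Y$, the segment $[x_{1}(z),x_{2}(z)]$ avoids $0$ (indeed $0<x_{1}(z)$), so $a(x,z)=zx^{2}$ stays bounded away from $0$ and the curve $Y([x_{1}(z),x_{2}(z)],z)$ is a bounded, hence closed, curve — this is part (ii) for the $Y$-curve. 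To check it is not inside the unit disc, I would look at the real trace: at the endpoint $x=x_{2}(z)$, where $d(x_{2}(z),z)=0$, we get the real value $Y_{0}(x_{2}(z),z)=-b(x_{2}(z),z)/[2a(x_{2}(z),z)]$; a short computation with $b(x,z)=zx^{2}-x+z$ should show this modulus exceeds $1$ (alternatively one can use that $X_{1}\big(Y_{0}(x,z),z\big)=x$ for $x$ in the cut together with the bound $|X_{0}|\le|X_{1}|$ from Lemma \ref{Properties_X_Y_0}(ii) to force $|Y_{0}|$ large at a suitable point).

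For part (iii) — the topological statement — the key observation is that each curve is a simple closed curve on the Riemann sphere $\mathbb{C}\cup\{\infty\}$: it is the image of the circle (the doubled segment, glued at its two endpoints where the discriminant vanishes and the two branches meet) under a map that is injective on the cut. Injectivity follows because $X_{0}^{\pm}$ is a genuine branch of an algebraic function of degree $2$, so two distinct points of the cut can have the same image only if they are exchanged by the Galois involution, i.e.\ only if they correspond to $X_{0}$ and $X_{1}$ at the same $y$; but $X_{1}^{\pm}(y,z)=X_{0}^{\mp}(y,z)$, so this does not produce a genuine self-intersection. Hence by the Jordan curve theorem each of the two curves separates the sphere (equivalently $\mathbb{C}$, once one checks $\infty$ lies on the $X$-curve but in the unbounded component for the $Y$-curve) into exactly two connected components. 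Then $\mathscr{G}X(\cdots)$ and $\mathscr{G}Y(\cdots)$ are well-defined as the components containing $x_{1}(z)$ and $y_{1}(z)$ respectively, and it remains to prove the two chains of inclusions.

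For the inclusion statements I would argue that the full branch cut $[x_{1}(z),x_{2}(z)]$ cannot meet the curve $X([y_{1}(z),y_{2}(z)],z)$: a point in this intersection would be of the form $x=X_{0}^{\pm}(y,z)$ for some $y$ in the $y$-cut and simultaneously a point where $d(x,z)\le 0$; using $d(X_{0}^{\pm}(y,z),z)=(b+2ay)^{2}$ evaluated at $y=Y(x,z)$, one shows this is impossible off the endpoints by tracking signs of the discriminants $d$ and $\widetilde d$ on the respective segments (this is the same sign-bookkeeping that underlies the ordering $0<x_{1}<x_{2}<1<x_{3}<x_{4}$ and $0<y_{2}<1<y_{3}$). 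Since $[x_{1}(z),x_{2}(z)]$ is connected and disjoint from the separating curve, it lies entirely in one component, and since it contains $x_{1}(z)$ that component is $\mathscr{G}X(\cdots)$, giving the left inclusion. For the right inclusion $\mathscr{G}X(\cdots)\subset\mathbb{C}\setminus[x_{3}(z),x_{4}(z)]$, I would show instead that the other branch cut $[x_{3}(z),x_{4}(z)]$ lies in the complementary component: again it is connected and disjoint from the curve by the same sign analysis, and one distinguishes which side it falls on by evaluating at a single test point — e.g.\ comparing $x_{3}(z)$ with the real intersection point of the curve with the positive real axis, or noting that $X([y_{1}(z),y_{2}(z)],z)$ contains $\infty$ so $x_{4}(z)$ and $\infty$ cannot be in the same component as $x_{1}(z)$ unless the curve fails to separate them, which it does not. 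The argument for $Y([x_{1}(z),x_{2}(z)],z)$ is symmetric, exchanging the roles of $x$ and $y$ and of the segments $[x_{k}(z)]$ and $[y_{k}(z)]$.

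The main obstacle, I expect, is part (iii): verifying with full rigour that the cut curves are simple (no self-intersections) and then pinning down \emph{which} of the two components contains each branch point and each of the other branch cuts. The separation itself is soft once simplicity is known, but the inclusion chains require the careful sign analysis of the discriminants $d(x,z)$ and $\widetilde d(y,z)$ along the real segments, together with well-chosen numerical/limiting evaluations at the endpoints $y_{1}(z)=0$, $x=0$, $x_{2}(z)$, $x_{3}(z)$ and at $\infty$, to rule out the wrong configuration. Parts (i) and (ii) are comparatively routine consequences of (\ref{values_cuts}) and of Lemma \ref{Properties_X_Y_0}.
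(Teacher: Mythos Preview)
The paper does not prove this lemma at all: immediately after the statement the authors write that the proofs of Lemmas~\ref{Properties_X_Y_0} and~\ref{Properties_curves_0} ``are given in Part 5.3 of \cite{FIM} for $z=1/4$; they can be plainly extended up to $z\in]0,1/4[$.'' So there is no in-paper argument to compare yours against---your sketch is a self-contained replacement for that citation. Your treatment of (i)--(ii) is fine; a cleaner route than your endpoint test is to use the product of the roots: on $[y_{1},y_{2}]$ the two branches are conjugate, so $|X_{0}^{\pm}(y,z)|^{2}=X_{0}^{+}X_{0}^{-}=\widetilde c/\widetilde a=1/y>1$ for $0<y\le y_{2}<1$, and similarly $|Y_{0}^{\pm}(x,z)|^{2}=c/a=1/x^{2}>1$ on $[x_{1},x_{2}]\subset(0,1)$. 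This shows both curves lie entirely outside the open unit disc with no computation at $x_{2}(z)$.

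For (iii) your simplicity argument has a soft spot. Saying that a coincidence $X_{0}^{+}(y,z)=X_{0}^{+}(y',z)=x$ forces $y,y'$ to be ``exchanged by the Galois involution'' only tells you they are the two roots of $L(x,\cdot,z)=0$; you still have to exclude that both roots are distinct and lie in $[y_{1},y_{2}]$. This can be closed---e.g.\ $yy'=c(x,z)/a(x,z)=1/x^{2}$ would then be real, forcing $x$ real or purely imaginary, and the latter is impossible since $\Re X_{0}^{\pm}(y,z)=-\widetilde b/(2\widetilde a)=1/(2z(y+1))\neq 0$, while the former forces $y$ to be an endpoint---but as written it is not yet a proof. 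In \cite{FIM} the curves are identified as explicit real quartics in $(\Re t,\Im t)$, which makes both simplicity and the inclusion chains routine; the uniformisation in Section~\ref{Study_CGF} (Proposition~\ref{transformation_cycles_uniformization}) gives another clean route, since the doubled segment becomes an honest cycle on the torus. Your sign-tracking plan for the inclusions is workable, and you are right that this is where the real labour sits.
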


\begin{figure}[!ht]
\begin{center}
\begin{picture}(000.00,740.00)
\hspace{-117mm}\includegraphics{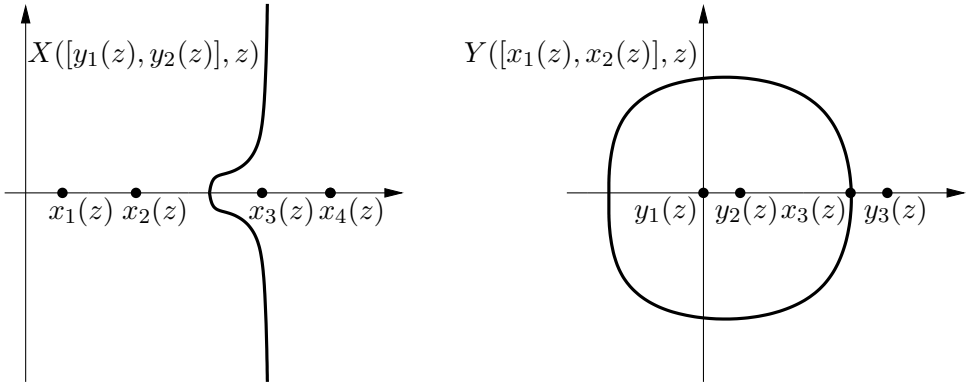}
\end{picture}
\end{center}
\vspace{-215mm}
\caption{The curves $X([y_{1}(z),y_{2}(z)],z)$ and $Y([x_{1}(z),x_{2}(z)],z)$}
\label{Locations_of_the_curves}
\end{figure}

The proofs of Lemmas~\ref{Properties_X_Y_0} and~\ref{Properties_curves_0} are
given in Part 5.3 of \cite{FIM} for $z=1/4$; they can be plainly extended up to
$z\in]0,1/4[$.

\medskip

We shall soon see that $zQ(x,0,z)$ and $z(y+1)Q(0,y,z)$
satisfy BVPs of Riemann-Carleman type. It
will turn out that the underlying boundary conditions for
$zQ(x,0,z)$ and $z(y+1)Q(0,y,z)$ hold \textit{formally} on the
curves $X([y_{1}(z),y_{2}(z)],z)$ and $Y([x_{1}(z),x_{2}(z)],z)$
respectively. However, these curves are \textit{not} included in the
unit disc, see Lemma~\ref{Properties_curves_0} above, therefore the
functions $zQ(x,0,z)$ and $z(y+1)Q(0,y,z)$ are \text{a priori} not
defined on them. For this reason we need, first of all, to continue the
generating functions up to these curves.
  This is why we state the following theorem; its proof is postponed to
   Section~\ref{CONTINUATION}.

\begin{thm}
\label{Thm_continuation} The functions
     $zQ(x,0,z)$ and $z(y+1)Q(0,y,z)$ can be continued as holomorphic functions
     from their unit disc up to $\mathbb{C}\setminus [x_{3}(z),x_{4}(z)]$ and
     $\mathbb{C}\setminus [y_{3}(z),y_{4}(z)]$ respectively. Furthermore, for
     any $y \in \mathbb{C}\setminus([y_{1}(z),y_{2}(z)]\cup [y_{3}(z),y_{4}(z)])$,
          \begin{equation}
          \label{ooo}
               zQ(X_{0}(y,z),0,z)+z(y+1)Q(0,y,z)-zQ(0,0,z)-X_{0}(y,z)y=0
          \end{equation}
     and for any $x\in \mathbb{C}\setminus([x_{1}(z),x_{2}(z)]\cup [x_{3}(z),x_{4}(z)])$,
          \begin{equation}
          \label{ooo1}
               zQ(x,0,z)+z(Y_{0}(x,z)+1)Q(0,Y_{0}(x,z),z) -zQ(0,0,z)-xY_{0}(x,z)=0.
          \end{equation}
\end{thm}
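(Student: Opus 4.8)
The plan is to build the analytic continuation step by step, always using the functional equation \eqref{functional_equation} evaluated along the zero set of the kernel. The starting point is the observation that, since $|X_{0}(y,z)|\leq|X_{1}(y,z)|$ and $X_{0}X_{1}=\widetilde{c}/\widetilde{a}=1/(zy)$, one has $|X_{0}(y,z)|\leq 1$ for $y$ on the unit circle, and similarly $|Y_{0}(x,z)|\leq 1$ for $x$ on the unit circle (using Lemma~\ref{Properties_X_Y_0}(ii),(iv) and Lemma~\ref{Properties_curves_0}). Hence for $|y|\leq 1$ the point $(X_{0}(y,z),y)$ lies in the domain of validity of \eqref{functional_equation} with left-hand side zero, which gives \eqref{ooo} for $|y|\leq 1$; likewise \eqref{ooo1} holds for $|x|\leq 1$. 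The first real step is then to use \eqref{ooo} itself as a \emph{definition} of $z(y+1)Q(0,y,z)$ for $y$ ranging over $\mathbb{C}\setminus([y_{1},y_{2}]\cup[y_{3},y_{4}])$: the right object to track is $zQ(X_{0}(y,z),0,z)$, which by Lemma~\ref{Properties_X_Y_0}(i) is a well-defined holomorphic function of $y$ on that cut plane as long as $X_{0}(y,z)$ stays inside the current domain of holomorphy of $x\mapsto zQ(x,0,z)$. So I would set up a bootstrap: $zQ(x,0,z)$ known on $\{|x|\leq 1\}$ lets us define $z(y+1)Q(0,y,z)$ on $X_{0}^{-1}(\{|x|\leq 1\})$ via \eqref{ooo}; symmetrically \eqref{ooo1} extends $zQ(x,0,z)$ over $Y_{0}^{-1}$ of the current $y$-domain; and one alternates.

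The key geometric input making this bootstrap terminate at the claimed domains is Lemma~\ref{Properties_curves_0}(iii): the curve $\mathscr{L}:=Y([x_{1},x_{2}],z)$ encloses $[y_{1},y_{2}]$ and avoids $[y_{3},y_{4}]$, and $\mathscr{M}:=X([y_{1},y_{2}],z)$ encloses $[x_{1},x_{2}]$ and avoids $[x_{3},x_{4}]$. Concretely, $x\mapsto Y_{0}(x,z)$ maps $\mathbb{C}\setminus[x_{3},x_{4}]$ (with the cut $[x_{1},x_{2}]$ realized as the two banks glued along $X^{\pm}$ conventions) onto $\overline{\mathscr{G}Y([x_{1},x_{2}],z)}$, i.e.\ onto a region of the $y$-plane contained in $\mathbb{C}\setminus[y_{3},y_{4}]$; and conversely. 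Thus a single application of \eqref{ooo1} already continues $zQ(x,0,z)$ holomorphically from $\{|x|\le1\}$ to all of $\mathbb{C}\setminus[x_{3},x_{4}]$, because for such $x$ the argument $Y_{0}(x,z)$ lies in $\mathbb{C}\setminus[y_{3},y_{4}]$ where, by the corresponding statement for $Q(0,y,z)$, the term $z(Y_{0}(x,z)+1)Q(0,Y_{0}(x,z),z)$ makes sense. To avoid circularity one does this carefully: first enlarge the $x$-domain a little past the unit disc using that $Y_{0}$ maps a neighbourhood of $\{|x|\le1\}$ into a neighbourhood of $\{|y|\le1\}$, get a matching enlargement on the $y$-side from \eqref{ooo}, and iterate; the nested-domains structure from Lemma~\ref{Properties_curves_0} guarantees the increasing unions exhaust exactly $\mathbb{C}\setminus[x_{3},x_{4}]$ and $\mathbb{C}\setminus[y_{3},y_{4}]$, with no poles appearing because $zQ(x,0,z)$ and $z(y+1)Q(0,y,z)$ have constant Taylor coefficients in $z$ (the remark from the introduction), so the only candidate poles — coming from $X_{1}$'s pole at $-1$ or $Y_{1}$'s double pole at $0$ — never enter, as we only ever compose with the small branches $X_{0},Y_{0}$, which are holomorphic there by Lemma~\ref{Properties_X_Y_0}(i),(iii).

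Once holomorphy on the full cut planes is established, identities \eqref{ooo} and \eqref{ooo1} extend from the unit discs to all of $\mathbb{C}\setminus([y_{1},y_{2}]\cup[y_{3},y_{4}])$ and $\mathbb{C}\setminus([x_{1},x_{2}]\cup[x_{3},x_{4}])$ respectively by the identity theorem, since both sides are now holomorphic on connected sets containing the discs. I expect the main obstacle to be the bookkeeping of the bootstrap: proving that the increasing sequence of domains on which the continuations are defined really \emph{covers} $\mathbb{C}\setminus[x_{3},x_{4}]$ (resp.\ $\mathbb{C}\setminus[y_{3},y_{4}]$) and does not stall on some intermediate open set. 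This is where the precise inclusions $[x_{1},x_{2}]\subset\mathscr{G}X([y_{1},y_{2}],z)\subset\mathbb{C}\setminus[x_{3},x_{4}]$ and the dual inclusion for $y$ are used in an essential way — one must check that $Y_{0}$ maps each intermediate $x$-domain onto something strictly larger on the $y$-side (and vice versa), so that a compactness/open-closed argument forces the unions to be the whole cut planes. A secondary, more routine point is verifying the boundary behaviour $X_{0},Y_{0}$ on the banks of the cuts $[x_{1},x_{2}]$, $[y_{1},y_{2}]$ matches the gluing dictated by \eqref{values_cuts}, so that the "cut" in the $x$-plane corresponds to going around $\mathscr{M}$ in the $y$-plane; this is needed to see that no branch cut along $[x_{1},x_{2}]$ survives for $zQ(x,0,z)$, consistent with the final domain $\mathbb{C}\setminus[x_{3},x_{4}]$ having only one cut.
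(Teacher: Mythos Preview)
Your core idea---continue $zQ(x,0,z)$ and $z(y+1)Q(0,y,z)$ by repeatedly using the functional equation along the branches $X_{0},Y_{0}$---is exactly the right one, and it is also the engine behind the paper's proof. But the paper implements this idea very differently, and that difference is precisely where your proposal has a real gap.

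The paper does not bootstrap in the $x$- and $y$-planes. Instead it lifts everything to the elliptic curve $\{L(x,y,z)=0\}$ via the uniformization $(x(\omega),y(\omega))$ of Section~\ref{Study_CGF}. On the torus $\mathbb{C}/\Omega$ the sets $x^{-1}(\{|x|\le1\})$ and $y^{-1}(\{|y|\le1\})$ are explicit vertical strips (Proposition~\ref{transformation_circles_uniformization}), and one application of the functional equation together with one application of the involution $\psi(\omega)=-\omega$ suffices to cover the whole fundamental parallelogram minus the cut $x^{-1}([x_{3},x_{4}])=[0,\omega_{1}[$. That this works in \emph{finitely many} explicit moves comes from the identity $\omega_{3}=3\omega_{2}/4$ (Proposition~\ref{www}), i.e.\ from the group of the walk having order eight. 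Holomorphy of the continued functions is then checked pointwise on the torus using the explicit location of the zeros and poles of $x(\omega)$ and $y(\omega)$ (Lemma~\ref{some_particular_values}).

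In your plane argument, the step you flag yourself---that the increasing union of domains actually exhausts $\mathbb{C}\setminus[x_{3},x_{4}]$---is a genuine gap, not just bookkeeping. The ``compactness/open-closed'' sketch does not go through as stated: you would need to show that each round of the bootstrap strictly enlarges the domain \emph{and} that the limit cannot stop short of the full cut plane. Neither follows from Lemma~\ref{Properties_curves_0} alone. In fact, the information that makes termination work is the finite-group relation hidden in $\omega_{3}/\omega_{2}\in\mathbb{Q}$, and without passing to the torus (or reproducing an equivalent statement about iterates of $Y_{0}\circ X_{0}$ in the planes) you have no mechanism to force convergence. A second, smaller gap is the removal of the putative cut along $[x_{1},x_{2}]$: you gesture at the gluing coming from \eqref{values_cuts}, but making this rigorous again amounts to the $\psi$-invariance $q_{x}(\omega)=q_{x}(-\omega)$, which is transparent on the torus and awkward to phrase directly in the $x$-plane.

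In short: your plan and the paper's share the same starting identity, but the paper's uniformization turns an otherwise delicate limiting argument into three explicit steps on a parallelogram; your direct bootstrap is missing the ingredient (the finite-group structure, equivalently $\omega_{3}=3\omega_{2}/4$) that would let it terminate.
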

%\begin{rem}
%{
%For $y\in\{|y|\leq 1\}\setminus [y_{1}(z),y_{2}(z)]$ such that $|X_{0}(y,z)|\leq 1$, (\ref{ooo})
%immediately follows from (\ref{functional_equation}). Likewise, for $x\in\{|x|\leq 1\}\setminus
%[x_{1}(z),x_{2}(z)]$ such that $|Y_{0}(x,z)|\leq 1$, (\ref{ooo1}) is
%a straightforward consequence of (\ref{functional_equation}). The
%fact that Equations (\ref{ooo}) and (\ref{ooo1}) are more generally
%valid for $y\in\mathbb{C}\setminus\big([y_{1}(z),y_{2}(z)]\cup
%[y_{3}(z),y_{4}(z)]\big)$ and $x\in\mathbb{C}\setminus\big([y_{1}(z),y_{2}(z)]
%\cup[x_{3}(z),x_{4}(z)]\big)$ respectively will be shown in Section~\ref{CONTINUATION}.}
%\end{rem}

%\begin{rem}
%{
%
%In the proof of Theorem~\ref{Thm_continuation}, we will see that the
%function $Q(0,y,z)$ can also be holomorphically continued from the
%unit disc up to $\mathbb{C}\setminus [y_{3}(z),y_{4}(z)]$.}
%\end{rem}

It is immediate from Theorem~\ref{Thm_continuation} that for any
$z\in ]0,1/4[$, the function $Q(0,y,z)$ can be continued as a
holomorphic function from the unit disc up to $\mathbb{C}\setminus
[y_{3}(z),y_{4}(z)]$ as well: the point $y=-1$ cannot be a pole of
$Q(0,y,z)$ since the series $\sum_{j\geq 0,k\geq 0}q(0,j,k)
%y^j
z^{k}$
 %at $y=\pm 1$
 converges.

Now, we derive the above-mentioned boundary conditions satisfied by the functions
$zQ(x,0,z)$ and $z(y+1)Q(0,y,z)$ on the curves $X([y_{1}(z),y_{2}(z)],z)$ and
$Y([x_{1}(z),x_{2}(z)],z)$ respectively.

\begin{lem}
\label{RCproblem}
$zQ(x,0,z)$ and $z(y+1)Q(0,y,z)$ belong to the class of the functions~holomorphic in
$\mathscr{G}X([y_{1}(z),y_{2}(z)],z)$ and
$\mathscr{G}Y([x_{1}(z),x_{2}(z)],z)$ respectively, continuous up to
the boundary of the latter sets and satisfying the boundary conditions
\begin{equation}
     \label{BVP}
          \left.\begin{array}{ccccc}
          z\big[Q\big(t,0,z\big)\hspace{-0.5mm}-\hspace{-0.5mm}
          Q\big(\overline{t},0,z\big)\big]
          \hspace{-2.5mm}&=&\hspace{-2.5mm}
          tY_{0}\big(t,z\big)\hspace{-0.5mm}-\hspace{-0.5mm}\overline{t}Y_{0}\big(
          \overline{t},z\big), & &\hspace{-2.5mm}
          \forall t\in X([y_{1}(z),y_{2}(z)],z),\\
          z\big[\big(t+1\big)Q\big(0,t,z\big)\hspace{-0.5mm}-\hspace{-0.5mm}\big(\overline{t}+1\big)
          Q\big(0,\overline{t},z\big)\big]\hspace{-2.5mm}&=&\hspace{-2.5mm}X_{0}\big(t,z\big)t
          \hspace{-0.5mm}-\hspace{-0.5mm}X_{0}
          \big(\overline{t},z\big)\overline{t}, & &\hspace{-2.5mm}\forall t\in Y([x_{1}(z),x_{2}(z)],z).
          \end{array}\right.
     \end{equation}
\end{lem}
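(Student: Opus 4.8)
The plan is to deduce all three assertions---holomorphy of $zQ(x,0,z)$ in $\mathscr{G}X([y_{1}(z),y_{2}(z)],z)$ (and of $z(y+1)Q(0,y,z)$ in $\mathscr{G}Y([x_{1}(z),x_{2}(z)],z)$), continuity up to the respective boundary curves, and the boundary conditions (\ref{BVP})---from Theorem~\ref{Thm_continuation} together with Lemmas~\ref{Properties_X_Y_0} and~\ref{Properties_curves_0}. The first two points are immediate: by Lemma~\ref{Properties_curves_0} one has $\mathscr{G}X([y_{1}(z),y_{2}(z)],z)\subset\mathbb{C}\setminus[x_{3}(z),x_{4}(z)]$, and since $[x_{3}(z),x_{4}(z)]$ is connected and disjoint from $\mathscr{G}X([y_{1}(z),y_{2}(z)],z)$ it lies entirely in the \emph{other} component cut out by the curve $X([y_{1}(z),y_{2}(z)],z)$; hence $\mathbb{C}\setminus[x_{3}(z),x_{4}(z)]$ contains $\mathscr{G}X([y_{1}(z),y_{2}(z)],z)$ together with a full two-sided neighbourhood of that curve. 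By Theorem~\ref{Thm_continuation}, $zQ(x,0,z)$ is holomorphic on $\mathbb{C}\setminus[x_{3}(z),x_{4}(z)]$, hence holomorphic in $\mathscr{G}X([y_{1}(z),y_{2}(z)],z)$ and holomorphic (a fortiori continuous) across its boundary curve; in particular this curve avoids $[x_{1}(z),x_{2}(z)]\cup[x_{3}(z),x_{4}(z)]$, so $Y_{0}(t,z)$ is defined on it. The argument for $z(y+1)Q(0,y,z)$ on $\mathscr{G}Y([x_{1}(z),x_{2}(z)],z)$ is the same, using $\mathscr{G}Y([x_{1}(z),x_{2}(z)],z)\subset\mathbb{C}\setminus[y_{3}(z),y_{4}(z)]$.

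For the first line of (\ref{BVP}), I would parametrise the curve by the branches of $X$: any $t\in X([y_{1}(z),y_{2}(z)],z)$ equals $X_{0}^{+}(y_{0},z)$ for some $y_{0}\in[y_{1}(z),y_{2}(z)]$ (the case $t=X_{0}^{-}(y_{0},z)$ follows by conjugation, under which both sides of (\ref{BVP}) change sign), and then $\overline{t}=X_{0}^{-}(y_{0},z)=X_{1}^{+}(y_{0},z)$ because $\widetilde{d}(\cdot,z)<0$ on $(y_{1}(z),y_{2}(z))$ makes the two boundary values conjugate. Letting $y\to y_{0}$ in identity (\ref{ooo}) from the upper and then from the lower side of the cut $[y_{1}(z),y_{2}(z)]$: the term $zQ(X_{0}(y,z),0,z)$ tends to $zQ(t,0,z)$, respectively $zQ(\overline{t},0,z)$, by the continuity established in the previous paragraph, while $z(y+1)Q(0,y,z)$ and $zQ(0,0,z)$ have the same limit from both sides (they are holomorphic near $[y_{1}(z),y_{2}(z)]\subset\mathbb{C}\setminus[y_{3}(z),y_{4}(z)]$); subtracting the two limiting equalities cancels the $Q(0,y,z)$ and $Q(0,0,z)$ terms and gives $z[Q(t,0,z)-Q(\overline{t},0,z)]=y_{0}(t-\overline{t})$. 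To identify $y_{0}$, note that $L(t,y_{0},z)=0$, so $y_{0}$ is one of the two roots $Y_{0}(t,z),Y_{1}(t,z)$ of $a(t,z)Y^{2}+b(t,z)Y+c(t,z)$, of product $c(t,z)/a(t,z)=1/t^{2}$; moreover $X_{0}^{+}(y_{0},z)X_{1}^{+}(y_{0},z)=\widetilde{c}(y_{0},z)/\widetilde{a}(y_{0},z)=1/y_{0}$ forces $t\overline{t}=1/y_{0}$, i.e.\ $y_{0}=1/|t|^{2}$ and the second root equals $\overline{t}/t$, of modulus $1$. Since $y_{0}\leq y_{2}(z)<1$ gives $|t|>1$, we get $|y_{0}|<1$, so $y_{0}$ is the root of smaller modulus, i.e.\ $y_{0}=Y_{0}(t,z)$ by Lemma~\ref{Properties_X_Y_0}\,(iv); the same with $t$ replaced by $\overline{t}$ gives $y_{0}=Y_{0}(\overline{t},z)$, and the equality becomes the first line of (\ref{BVP}).

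The second line of (\ref{BVP}) is proved symmetrically from identity (\ref{ooo1}): for $t\in Y([x_{1}(z),x_{2}(z)],z)$ write $t=Y_{0}^{+}(x_{0},z)$, $\overline{t}=Y_{0}^{-}(x_{0},z)=Y_{1}^{+}(x_{0},z)$ with $x_{0}\in[x_{1}(z),x_{2}(z)]$, let $x\to x_{0}$ in (\ref{ooo1}) from both sides of $[x_{1}(z),x_{2}(z)]$---licit because $zQ(x,0,z)$ is holomorphic near $[x_{1}(z),x_{2}(z)]\subset\mathbb{C}\setminus[x_{3}(z),x_{4}(z)]$, and because, by the remark following Theorem~\ref{Thm_continuation}, $Q(0,y,z)$ extends holomorphically to $\mathbb{C}\setminus[y_{3}(z),y_{4}(z)]$ and so is holomorphic near the curve $Y([x_{1}(z),x_{2}(z)],z)$---and subtract to get $z[(t+1)Q(0,t,z)-(\overline{t}+1)Q(0,\overline{t},z)]=x_{0}(t-\overline{t})$. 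Then $L(x_{0},t,z)=0$, the $x$-roots of $\widetilde{a}(t,z)X^{2}+\widetilde{b}(t,z)X+\widetilde{c}(t,z)$ are $X_{0}(t,z),X_{1}(t,z)$ with product $\widetilde{c}(t,z)/\widetilde{a}(t,z)=1/t$, and $Y_{0}^{+}(x_{0},z)Y_{1}^{+}(x_{0},z)=c(x_{0},z)/a(x_{0},z)=1/x_{0}^{2}$ gives $|t|=1/x_{0}>1$ (because $x_{0}\leq x_{2}(z)<1$); hence the two $x$-roots of $L(\cdot,t,z)$ are $x_{0}$ and $1/(tx_{0})$, of moduli $x_{0}<1$ and $1$ respectively, so $x_{0}=X_{0}(t,z)=X_{0}(\overline{t},z)$ by Lemma~\ref{Properties_X_Y_0}\,(ii), which is the second line of (\ref{BVP}).

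The main obstacle I anticipate is the rigorous justification that these boundary limits may be taken on the curves $X([y_{1}(z),y_{2}(z)],z)$ and $Y([x_{1}(z),x_{2}(z)],z)$---that is, that $zQ(x,0,z)$, $z(y+1)Q(0,y,z)$ and $Q(0,y,z)$ are genuinely continuous up to those curves and that the one-sided limits of the algebraic functions $X_{0}$ and $Y_{0}$ coincide with the boundary values $X_{0}^{\pm}$, $Y_{0}^{\pm}$ used above (including the limiting behaviour at the point at infinity belonging to $X([y_{1}(z),y_{2}(z)],z)$). This is exactly where the precise inclusions of Lemma~\ref{Properties_curves_0} are indispensable---they place $[x_{3}(z),x_{4}(z)]$ and $[y_{3}(z),y_{4}(z)]$ in the far component, so that the domains of holomorphy supplied by Theorem~\ref{Thm_continuation} really do surround the relevant curves---and, to a lesser degree, the modulus bookkeeping is needed to single out the branch $Y_{0}$ (resp.\ $X_{0}$) rather than $Y_{1}$ (resp.\ $X_{1}$) when identifying $y_{0}$ (resp.\ $x_{0}$).
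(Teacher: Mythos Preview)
Your proof is correct and follows essentially the same route as the paper: invoke Lemma~\ref{Properties_curves_0} and Theorem~\ref{Thm_continuation} for holomorphy and boundary continuity, then pass to the limit in (\ref{ooo}) (resp.\ (\ref{ooo1})) from both sides of the cut and subtract. The only noteworthy difference is that where the paper simply asserts $Y_{0}(X_{k}^{\pm}(y,z),z)=y$ ``by elementary considerations starting from Lemma~\ref{Properties_X_Y_0}'', you supply an explicit Vieta/modulus argument to pick out the branch $Y_{0}$ (resp.\ $X_{0}$); this is a clean and self-contained justification of that step.
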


\begin{proof}
Due to Lemma~\ref{Properties_curves_0} and
Theorem~\ref{Thm_continuation}, it remains to prove
Equation~(\ref{BVP}) above. Let $y \in [y_{1}(z),y_{2}(z)]$ and let
$\hat{y}^{+}$ and $\hat{y}^{-}$ be close to $y$, such that
$\hat{y}^{+}$ is in the \textit{upper} half-plane and $\hat{y}^{-}$
in the \textit{lower} half-plane. Then we have (\ref{ooo}) for both
$\hat{y}^{+}$ and $\hat{y}^{-}$. If now $\hat{y}^{+}\to y$ and
$\hat{y}^{-}\to y$, then we obtain $X_{0}(\hat{y}^{+},z) \to
X_{0}^{+}(y,z)$ and $X_{0}(\hat{y}^{-},z) \to
X_{0}^{-}(y,z)=X_{1}^{+}(y,z)$. So we have proved that for any $y
\in [y_1(z),y_2(z)]$,
  \begin{eqnarray}
  \label{ooo2}
       \phantom{X_{1}}zQ(X_{0}^{+}(y,z),0,z)+z(y+1)Q(0,y,z)-zQ(0,0,z)
       \hspace{-3mm}&-&\hspace{-3mm}X_{0}^{+}(y,z)y=0,\phantom{.}\phantom{XX}\\
  \label{ooo3}
       \phantom{X_{0}}zQ(X_{1}^{+}(y,z),0,z)+z(y+1)Q(0,y,z)-zQ(0,0,z)
       \hspace{-3mm}&-&\hspace{-3mm}X_{1}^{+}(y,z)y=0.\phantom{,}\phantom{XX}
  \end{eqnarray}
Subtracting (\ref{ooo3}) from (\ref{ooo2}) gives that for $y \in [y_{1}(z), y_{2}(z)]$,
 \begin{equation*}
     \label{before_BVP}
          z\big[Q(X_{0}^{+}(y,z),0,z)-Q(X_{1}^{+}(y,z),0,z)\big]=
          X_{0}^{+}(y,z)y-X_{1}^{+}(y,z)y.
     \end{equation*}
Then, using the fact that for $k\in\{0,1\}$, $y\in[y_{1}(z),y_{2}(z)]$
and $z\in]0,1/4[$, $Y_{0}(X_{k}^{\pm}(y,z),z)=y$---which can be proved by
elementary considerations starting from Lemma~\ref{Properties_X_Y_0}---we get the first part of (\ref{BVP}). Likewise, we could prove
the second part of (\ref{BVP}).
\end{proof}

%Note that as a consequence of (\ref{ooo2}) and (\ref{ooo3}), Equation (\ref{ooo})
%is in some sense also satisfied for $y\in [y_{1}(z),y_{2}(z)]$; the same is true
%for Equation (\ref{ooo1}) and $x\in [x_{1}(z),x_{2}(z)]$.

\section{Results}
\label{Section_results}

Problems as in Lemma~\ref{RCproblem} are usually called BVPs of
Riemann-Carleman type, see e.g.\ Part~5.2.5 of \cite{FIM}. A
standard way to solve them consists in converting them into BVPs of
Riemann-Hilbert type (\text{i.e.}\ with boundary conditions on
segments) by using~{\it conformal gluing functions} (CGFs), as in
Equation (17.4') on page~$130$
%in Part 17.5
of \cite{GAK}.

\begin{defn}
\label{def_CGF}
     Let $\mathscr{C}\subset\mathbb{C}\cup \{\infty\}$ be an open and simply connected set,
     symmetrical w.r.t.\ the real axis and different from $\emptyset$, $\mathbb{C}$ and
     $\mathbb{C}\cup \{\infty\}$. A function $w$ is said to be a CGF for set $\mathscr{C}$
     if:
\begin{enumerate}

\item $w$ is meromorphic in $\mathscr{C}$;

\item $w$ establishes a conformal
     mapping of $\mathscr{C}$ onto the complex plane cut along some arc;

\item for all
     $t$ in the boundary of $\mathscr{C}$, $w\big(t\big)=w\big(\overline{t}\big)$.
\end{enumerate}
\end{defn}

Let $w(t,z)$ and $\widetilde{w}(t,z)$ be CGFs for
$\mathscr{G}X([y_{1}(z),y_{2}(z)],z)$ and
$\mathscr{G}Y([x_{1}(z),x_{2}(z)],z)$. The existence (but
\textit{no} explicit expression) of $w$ and $\widetilde{w}$ follows from
general results on conformal gluing, see e.g.\ Part 17.5 in \cite{GAK}.

Transforming the BVPs of Riemann-Carleman type into
BVPs of Riemann-Hilbert type thanks to $w$ and $\widetilde{w}$,
solving the latter and working out the solutions, we will prove the following.

\begin{thm}
\label{explicit_integral}
For $z\in]0,1/4[$ and $x\in\mathbb{C}\setminus [x_{3}(z),x_{4}(z)]$,
     \begin{eqnarray*}
          z\big[Q(x,0,z)-Q(0,0,z)\big]=\hspace{96mm}
          \\ xY_{0}(x,z)+\frac{1}{\pi}\int_{x_{1}(z)}^{x_{2}(z)}\frac{t\big[-d(t,z)\big]^{1/2}}
          {2a(t,z)}\left[\frac{\partial_{t}w(t,z)}{w(t,z)-w(x,z)}
          -\frac{\partial_{t}w(t,z)}{w(t,z)-w(0,z)}\right]\textnormal{d}t,
     \end{eqnarray*}
where $w$ is a CGF for the set $\mathscr{G}X([y_{1}(z),y_{2}(z)],z)$.

\medskip

For $z\in]0,1/4[$ and $y\in\mathbb{C}\setminus [y_{3}(z),y_{4}(z)]$,
     \begin{eqnarray*}
          z\big[(y+1)Q(0,y,z)-Q(0,0,z)\big]=\hspace{85mm}
          \\ X_{0}(y,z)y+\frac{1}{\pi}\int_{y_{1}(z)}^{y_{2}(z)}
          \frac{t\big[-\widetilde{d}(t,z)\big]^{1/2}}{2\widetilde{a}(t,z)}
          \left[\frac{\partial_{t}\widetilde{w}(t,z)}{\widetilde{w}(t,z)-
          \widetilde{w}(y,z)}-\frac{\partial_{t}\widetilde{w}(t,z)}
          {\widetilde{w}(t,z)-\widetilde{w}(0,z)}\right]
          \textnormal{d}t,
     \end{eqnarray*}
where $\widetilde{w}$ is a CGF for the set $\mathscr{G}Y([x_{1}(z),x_{2}(z)],z)$.

\medskip

For $z\in]0,1/4[$,
     \begin{equation*}
          Q(0,0,z)=-\frac{1}{\pi}\int_{y_{1}(z)}^{y_{2}(z)}\frac{t\big[
          -\widetilde{d}(t,z)\big]^{1/2}}{2\widetilde{a}(t,z)}\left[
          \frac{\partial_{t}\widetilde{w}(t,z)}{\widetilde{w}(t,z)-\widetilde{w}(-1,z)}
          -\frac{\partial_{t}\widetilde{w}(t,z)}{\widetilde{w}(t,z)-\widetilde{w}(0,z)}
          \right]\textnormal{d}t,
     \end{equation*}
where $\widetilde{w}$ is a CGF for the set $\mathscr{G}Y([x_{1}(z),x_{2}(z)],z)$.

\medskip

The function $Q(x,y,z)$ has then the explicit expression obtained by using the ones of
$Q(x,0,z)$, $Q(0,y,z)$ and $Q(0,0,z)$ in (\ref{functional_equation}).
\end{thm}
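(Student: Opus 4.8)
The plan is to solve the two boundary value problems of Lemma~\ref{RCproblem} by transporting them, through the conformal gluing functions $w$ and $\widetilde{w}$, to Riemann--Hilbert problems on the segments $[x_{1}(z),x_{2}(z)]$ and $[y_{1}(z),y_{2}(z)]$, and then to solve the latter by a Cauchy-type integral. Fix $z\in]0,1/4[$ and set $h(x)=zQ(x,0,z)-xY_{0}(x,z)$. By Theorem~\ref{Thm_continuation}, $zQ(x,0,z)$ is holomorphic on $\mathbb{C}\setminus[x_{3}(z),x_{4}(z)]$, and by Lemma~\ref{Properties_X_Y_0}(iii), $xY_{0}(x,z)$ is holomorphic on $\mathbb{C}\setminus([x_{1}(z),x_{2}(z)]\cup[x_{3}(z),x_{4}(z)])$; hence $h$ is holomorphic on $\mathscr{G}X([y_{1}(z),y_{2}(z)],z)\setminus[x_{1}(z),x_{2}(z)]$ and continuous up to the curve $X([y_{1}(z),y_{2}(z)],z)$. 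The first line of (\ref{BVP}) says exactly that $h(t)=h(\overline{t})$ on this curve, while (\ref{values_cuts}) together with the holomorphy of $zQ(x,0,z)$ across $[x_{1}(z),x_{2}(z)]$ gives the jump $h^{+}(t)-h^{-}(t)=\imath\,t[-d(t,z)]^{1/2}/a(t,z)$ for $t\in[x_{1}(z),x_{2}(z)]$.

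Next I transport $h$ by the CGF. Since $w(\cdot,z)$ maps $\mathscr{G}X([y_{1}(z),y_{2}(z)],z)$ conformally onto $\mathbb{C}$ cut along an arc $\Gamma$ and $w(t,z)=w(\overline{t},z)$ on the boundary curve by Definition~\ref{def_CGF}(iii), the boundary values of $w(\cdot,z)$ send $t$ and $\overline{t}$ to the same point of $\Gamma$ from its two sides; combining this with $h(t)=h(\overline{t})$, the function $H=h\circ w(\cdot,z)^{-1}$ has equal limits from the two sides of $\Gamma$, hence extends holomorphically across $\Gamma$. The resulting identity being independent of the chosen CGF, I may take $w(\cdot,z)$ with its (unique, simple) pole off $[x_{1}(z),x_{2}(z)]$, so that $H$ is holomorphic on $(\mathbb{C}\cup\{\infty\})\setminus L$, where $L=w([x_{1}(z),x_{2}(z)],z)$ is a compact arc disjoint from $\Gamma$, across which $H$ inherits the jump of $h$, with integrable endpoint singularities. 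By the Sokhotski--Plemelj formulas, $H(u)-H(\infty)=\frac{1}{2\pi\imath}\int_{L}\frac{H^{+}(v)-H^{-}(v)}{v-u}\,\textnormal{d}v$. Evaluating at $u=w(x,z)$ and at $u=w(0,z)$, subtracting, substituting $v=w(t,z)$ with $t$ running over $[x_{1}(z),x_{2}(z)]$, and using $h^{+}(t)-h^{-}(t)=\imath\,t[-d(t,z)]^{1/2}/a(t,z)$, I get $h(x)-h(0)=\frac{1}{\pi}\int_{x_{1}(z)}^{x_{2}(z)}\frac{t[-d(t,z)]^{1/2}}{2a(t,z)}\big[\frac{\partial_{t}w(t,z)}{w(t,z)-w(x,z)}-\frac{\partial_{t}w(t,z)}{w(t,z)-w(0,z)}\big]\textnormal{d}t$. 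Since $h(0)=zQ(0,0,z)$ and $h(x)+xY_{0}(x,z)=zQ(x,0,z)$, this is the first formula of Theorem~\ref{explicit_integral}, first for $x\in\mathscr{G}X([y_{1}(z),y_{2}(z)],z)$ and then, both sides being holomorphic there, for all $x\in\mathbb{C}\setminus[x_{3}(z),x_{4}(z)]$ by analytic continuation.

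The second formula is proved in the same way with $\widetilde{h}(y)=z(y+1)Q(0,y,z)-yX_{0}(y,z)$, the curve $Y([x_{1}(z),x_{2}(z)],z)$, the region $\mathscr{G}Y([x_{1}(z),x_{2}(z)],z)$, the CGF $\widetilde{w}$, and the jump of $X_{0}$ across $[y_{1}(z),y_{2}(z)]$ computed from (\ref{values_cuts}), the holomorphy of $z(y+1)Q(0,y,z)$ across $[y_{1}(z),y_{2}(z)]$ being the one noted right after Theorem~\ref{Thm_continuation}; here $\widetilde{h}(0)=zQ(0,0,z)$. Putting $y=-1$ in this second formula and using $X_{0}(-1,z)=0$ from Lemma~\ref{Properties_X_Y_0}(i) gives the announced expression for $Q(0,0,z)$. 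Finally, dividing the functional equation (\ref{functional_equation}) by the kernel $L(x,y,z)$ and inserting the three formulas just established yields the expression for $Q(x,y,z)$.

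The main obstacle is the analytic bookkeeping of the second paragraph: proving that $H=h\circ w(\cdot,z)^{-1}$ really does extend holomorphically across the whole of $\Gamma$ --- including its tips and, after the normalization of the pole of $w$, the point at infinity --- so that the Sokhotski--Plemelj representation applies with no additional entire term; and keeping track of every branch of the square roots and of the orientation of the contours so that the constant $H(\infty)$ drops out and the factors of $\imath$ recombine into the real kernel $t[-d(t,z)]^{1/2}/[2a(t,z)]$ of the statement. A secondary point, needed to pass from $\mathscr{G}X([y_{1}(z),y_{2}(z)],z)$ and $\mathscr{G}Y([x_{1}(z),x_{2}(z)],z)$ to the full cut planes, is a meromorphic continuation of $w$ and $\widetilde{w}$ beyond these sets, which combined with the holomorphy of the left-hand sides already granted by Theorem~\ref{Thm_continuation} makes the analytic continuation legitimate.
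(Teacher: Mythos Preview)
Your argument is correct and follows the same overall strategy as the paper: convert the Riemann--Carleman problem of Lemma~\ref{RCproblem} into a Riemann--Hilbert problem via the CGF, solve by a Cauchy-type integral, and then continue to the full cut plane. The organization differs slightly. The paper first writes, up to an additive function of $z$,
\[
zQ(x,0,z)=\frac{1}{2\pi\imath}\int_{X([y_{1},y_{2}])} tY_{0}(t,z)\,\frac{\partial_{t}w(t,z)}{w(t,z)-w(x,z)}\,\textnormal{d}t
\]
and then deforms this contour to a two-sided contour around $[x_{1},x_{2}]$, picking up the residue $xY_{0}(x,z)$ en route; the jump of $Y_{0}$ across $[x_{1},x_{2}]$ via (\ref{values_cuts}) then produces the real-segment integral. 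You instead subtract $xY_{0}(x,z)$ at the outset, obtaining a function $h$ with \emph{homogeneous} gluing data $h(t)=h(\overline{t})$ and a jump only across $[x_{1},x_{2}]$, so that after transport by $w$ the Plemelj representation on $w([x_{1},x_{2}])$ applies directly. This is a legitimate and slightly cleaner shortcut; your freedom to move the pole of $w$ off $[x_{1},x_{2}]$ is justified because the integrand $\partial_{t}w/(w(t)-w(x))-\partial_{t}w/(w(t)-w(0))$ is invariant under M\"obius postcomposition of $w$.

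For the extension from $\mathscr{G}X([y_{1},y_{2}])$ to $\mathbb{C}\setminus[x_{3},x_{4}]$, the paper supplies exactly the ingredient you flag at the end: using the explicit elliptic-function construction of $w$ (Section~\ref{Study_CGF}), it shows that $w$ continues meromorphically to $\mathbb{C}\setminus[x_{3},x_{4}]$ and proves $w^{-1}(w([x_{1},x_{2}]))=[x_{1},x_{2}]$ (Proposition~\ref{cc}), so the right-hand side acquires no spurious singularities and the identity propagates by analytic continuation.
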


 % The function $Q_{i,j}(z)=\sum_{k \geq 0} q(i,j,k)z^k$ can be now
 % obtained from $Q(x,y,z)$ by Cauchy formulas for any real $z \in
 % ]0,1/4[$. Since the radius of convergence of the complex series
%$\sum_{k \geq 0} q(i,j,k)z^k$ is not smaller than $1/4$,
 % then $q_{i,j,k}(z)$ can be identified e.g.\ in terms  of
 %  derivatives as $\lim_{z \to 0+} Q_{i,j}^{(k)}(z)/k!$.

All functions in the integrands above
%in Theorem~\ref{explicit_integral}
are explicit, except for the CGFs $w$
and $\widetilde{w}$. In \cite{FIM}, suitable CGFs are computed
\textit{implicitly} by means of the inverse of some known function,
see Equations (\ref{expression_CGFx}) and (\ref{expression_CGFy}) in
Section~\ref{Study_CGF} for the details. Starting from this
representation, we are able to make \textit{explicit} these
functions for the case of Gessel's walks.
In order to state the result, we need to define
     \begin{equation}
     \label{GGG}
          \begin{array}{ccl}
          \displaystyle G_{2}(z)&=&(4/27)\big(1+224z^{2}+256z^{4}\big),\\
          \displaystyle G_{3}(z)&=&(8/729)\big(1+16z^{2}\big)\big(1-24z+
          16z^{2}\big)\big(1+24z+16z^{2}\big).
          \end{array}
     \end{equation}
We define also $K(z)$ as the unique real positive solution to
     \begin{equation}
     \label{KKK}
          K^{4}-G_{2}(z)K^{2}/2-G_{3}(z)K-G_{2}(z)^{2}/48=0.
     \end{equation}
With the notations $r_{k}(z)=[G_{2}(z)-\exp(2k\imath
\pi/3)(G_{2}(z)^{3}-27G_{3}(z)^{2})^{1/3}]/3$ for $k\in \{0,1,2\}$, we have
$K(z)=[-r_{0}(z)^{1/2}+r_{1}(z)^{1/2}+r_{2}(z)^{1/2}]/2$. We finally
define
     \begin{equation}
     \label{def_F_F_tilde}
          \left.\begin{array}{ccccc}
          F(t,z)&=&\displaystyle\frac{1-24z+16z^{2}}{3}&-&
          \displaystyle\frac{4(1-4z)^{2}}{z}
          \frac{t^{2}}{(t-x_{2}(z))(t-1)^{2}(t-x_{3}(z))},\\
          \widetilde{F}(t,z)&=&\displaystyle\frac{1-24z+16z^{2}}{3}
          &+&\displaystyle\frac{4(1-4z)^{2}}{z}
          \frac{t(t+1)^{2}}{\hspace{4.5mm}[(t-x_{2}(z))(t-x_{3}(z))]^{2}\hspace{4.5mm}}.
     \end{array}\right.
     \end{equation}

\begin{thm}
\label{w_w_tilde_algebraic}
A suitable CGF for the set $\mathscr{G}X([y_{1}(z),y_{2}(z)],z)$
 is the unique function having a
pole at $x_{2}(z)$ and solution to
       \begin{equation}
       \label{eq_w_w_tilde_algebraic}
       \hspace{-1mm}\left.\begin{array}{ccc}
            w^{3}-w^{2}\big[F(t,z)\hspace{-3mm}&+&\hspace{-25mm}2K(z)\big]
            +w\big[2K(z)F(t,z)+K(z)^{2}/3+G_{2}(z)/2\big]\\
            \hspace{-2mm}&-&\hspace{-2mm}\big[K(z)^{2}F(t,z)+19G_{2}(z)K(z)/18+
            G_{3}(z)-46K(z)^{3}/27\big]=0.
       \end{array}\right.
       \end{equation}
Likewise, a suitable CGF for the set
$\mathscr{G}Y([x_{1}(z),x_{2}(z)],z)$ is the unique function having
a pole at $x_{3}(z)$ and solution to the equation obtained from (\ref{eq_w_w_tilde_algebraic})
by replacing $F$ by $\widetilde{F}$, see~(\ref{def_F_F_tilde}).
\end{thm}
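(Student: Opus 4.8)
The plan is to start from the implicit description of a conformal gluing function $w$ for $\mathscr{G}X([y_{1}(z),y_{2}(z)],z)$ that is available from \cite{FIM} (referenced as Equations~(\ref{expression_CGFx}) and~(\ref{expression_CGFy}) in Section~\ref{Study_CGF}): there $w$ is built from the inverse of a certain function expressed through Weierstrass $\wp$-functions attached to the Riemann surface of $Y(x,z)$. Because Gessel's group is finite of order $8$, the associated Riemann surface is of genus $0$, so the elliptic objects degenerate and the CGF becomes an \emph{algebraic} function of $t$. First I would uniformize: parametrize the curve $X([y_{1}(z),y_{2}(z)],z)$ rationally, write the known CGF in the uniformizing variable, and recognize it as a rational (genus-zero) function; this produces $w$ as an algebraic function of $t$ of some degree, which I expect to be $3$ given the shape of~(\ref{eq_w_w_tilde_algebraic}).

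Next I would identify the minimal polynomial of $w$ over $\mathbb{C}(t,z)$. The strategy is to compute, from the uniformization, the elementary symmetric functions of the (three) branches of $w$ as rational functions of $t$ and $z$, and then observe that after the substitution $t\mapsto F(t,z)$ — where $F$ is the explicit rational function in~(\ref{def_F_F_tilde}) — these symmetric functions collapse to the polynomial coefficients displayed in~(\ref{eq_w_w_tilde_algebraic}), i.e.\ the coefficient of $w^{2}$ is $F+2K$, the coefficient of $w$ is $2KF+K^{2}/3+G_{2}/2$, and the constant term is $K^{2}F+19G_{2}K/18+G_{3}-46K^{3}/27$. The auxiliary quantities $G_{2}(z),G_{3}(z)$ should emerge as (essentially) the modular invariants $g_{2},g_{3}$ of the degenerate elliptic curve, and $K(z)$, the distinguished real root of~(\ref{KKK}), should be the value of the relevant $\wp$-function at the half-period corresponding to the gluing; the quartic~(\ref{KKK}) is then just the defining relation $4K^{3}-g_{2}K-g_{3}=\text{(square)}$ rewritten, which is why $K$ can be given by the radical formula $K=[-r_{0}^{1/2}+r_{1}^{1/2}+r_{2}^{1/2}]/2$ with the $r_{k}$ as stated.

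After the algebraic equation is established, I would check that the branch of~(\ref{eq_w_w_tilde_algebraic}) singled out by ``having a pole at $x_{2}(z)$'' genuinely satisfies Definition~\ref{def_CGF}: meromorphy on $\mathscr{G}X([y_{1}(z),y_{2}(z)],z)$ is clear once we know $w$ is algebraic and its only pole in that region is at $x_{2}(z)$; the gluing identity $w(t,z)=w(\overline{t},z)$ for $t\in X([y_{1}(z),y_{2}(z)],z)$ is inherited from the corresponding property of the \cite{FIM} CGF (and can be re-derived from the fact that on the cut the two branches $X_{0}^{\pm}$ are complex conjugate, cf.\ (\ref{values_cuts})); and conformality — that $w$ maps $\mathscr{G}X([y_{1}(z),y_{2}(z)],z)$ bijectively onto a cut plane — follows because the \cite{FIM} CGF already has this property and our $w$ differs from it only by the genus-zero simplification, which does not change the mapping. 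The second assertion, for $\widetilde{w}$ on $\mathscr{G}Y([x_{1}(z),x_{2}(z)],z)$, is entirely parallel: the roles of $X$ and $Y$ are symmetric in the kernel $L$, the pole location moves from $x_{2}(z)$ to $x_{3}(z)$, and one only replaces $F$ by $\widetilde{F}$; so I would simply indicate that the same computation, mutatis mutandis, yields~(\ref{def_F_F_tilde}) for $\widetilde{F}$ and leaves $G_{2},G_{3},K$ unchanged (they depend only on $z$, not on which variable we gluer).

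The main obstacle will be the passage from the transcendental (Weierstrass) description of the CGF to the clean algebraic normal form~(\ref{eq_w_w_tilde_algebraic}): one must correctly perform the genus-zero degeneration of the elliptic uniformization, track how the half-periods and the invariants $g_{2},g_{3}$ specialize to $G_{2}(z),G_{3}(z)$ and $K(z)$, and then carry out the (lengthy but mechanical) elimination to get the symmetric functions of the branches in terms of $F$. A secondary delicate point is the correct choice of branch and of the sign conventions in the radicals defining $K(z)$ and in $r_{k}(z)$, and verifying that the chosen branch is the one with a pole precisely at $x_{2}(z)$ (resp.\ $x_{3}(z)$) and not at another branch point; this requires a local analysis of~(\ref{eq_w_w_tilde_algebraic}) near the $x_{i}(z)$, comparing discriminants, which I would do by inspecting $F(t,z)$ near its poles $x_{2}(z),1,x_{3}(z)$.
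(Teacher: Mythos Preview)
Your proposal rests on a mistaken premise: you assert that because Gessel's group has finite order $8$ the Riemann surface $\mathscr{L}=\{L(x,y,z)=0\}$ has genus $0$ and the elliptic machinery ``degenerates''. In fact, as Proposition~\ref{genus_Riemann_surface} states, for every $z\in\,]0,1/4[$ the surface has genus \emph{one}, and the Weierstrass functions $\wp$ and $\wp_{1,3}$ appearing in~(\ref{expression_CGFx}) are genuine, non-degenerate elliptic functions. The finiteness of the group manifests not as a drop in genus but as a \emph{rational commensurability of periods}: $\omega_{3}=3\omega_{2}/4$ (Proposition~\ref{www}). This is the engine of the proof, and it is entirely absent from your plan; consequently your proposed ``genus-$0$ uniformization and rational parametrization'' simply does not exist here, and the subsequent steps (computing symmetric functions of the branches from that parametrization, etc.) have nothing to stand on.

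The paper's actual mechanism is an isogeny-type argument. Set $\omega_{4}=\omega_{2}/4=\omega_{3}/3$. Because $\omega_{4}=\omega_{2}/4$, the function $\wp_{1,4}$ (periods $\omega_{1},\omega_{4}$) is a rational function of $\wp$ via Lemma~\ref{sjjj} and the addition law, and in particular $\wp_{1,4}\big(x^{-1}(t)-[\omega_{1}+\omega_{2}]/2\big)=F(t)$ is the explicit rational function of~(\ref{def_F_F_tilde}) (Lemma~\ref{ll}). Because $\omega_{4}=\omega_{3}/3$, the same $\wp_{1,4}$ is also expressible through $\wp_{1,3}$ (Lemma~\ref{lll}); eliminating $\wp_{1,3}'$ via~(\ref{sss}) turns this into a \emph{cubic} in $\wp_{1,3}(\omega)$ with coefficients linear in $\wp_{1,4}(\omega)$. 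Evaluating at $\omega=x^{-1}(t)-[\omega_{1}+\omega_{2}]/2$ yields~(\ref{eq_w_w_tilde_algebraic}) for $w$. Along the way, $G_{2},G_{3}$ arise as (rescaled) invariants of $\wp_{1,4}$, not of a degenerate curve; $K=\wp_{1,3}(\omega_{3}/3)$ is a \emph{third}-division value, not a half-period value; and the quartic~(\ref{KKK}) is the third-division polynomial of $\wp_{1,3}$ (Lemma~\ref{K_algebraic}), not a rewriting of $4K^{3}-g_{2}K-g_{3}=\text{(square)}$. Finally, the branch selection is handled by observing that where $F$ has a pole the cubic~(\ref{eq_w_w_tilde_algebraic}) has a double finite root $K$ and one infinite root, so ``pole at $x_{2}(z)$'' singles out $w$; and $\widetilde{w}$ is obtained not by a parallel computation but by the composition $\widetilde{w}=w\circ X_{0}$ together with the identity $F\circ X_{0}=\widetilde{F}$.
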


%\begin{cor}
%\label{explicit_numbers}
%For any $(i,j)\in\mathbb{Z}_{+}^{2}$ and $k\geq 0$, the following number is explicit:
%     \begin{equation*}
%          \#\big\{\text{Gessel's walks starting at}\ (0,0)\ \text{and
%          ending at}\ (i,j)\ \text{in time}\ k\big\}.
%     \end{equation*}
%\end{cor}

  The function $Q(x,y,z)$
   being found in the domain $\{|x|<1, |y|<1, z\in ]0,1/4[\}$
 thanks to Theorems~\ref{explicit_integral} and~\ref{w_w_tilde_algebraic},
  its coefficients $Q_{i,j}(z)=\sum_{k \geq 0} q(i,j,k)z^k$ can be
 obtained most easily by the use of Cauchy formulas, for any real $z \in
  ]0,1/4[$. Since the radius of convergence of the complex series
$\sum_{k \geq 0} q(i,j,k)z^k$ is not smaller than $1/4$, the
  numbers $q(i,j,k)$ of Gessel's walks  can then  be identified
  e.g.\ in terms  of the limits
    of the successive derivatives as $z \to 0+$ (i.e.\ as $z>0$ goes to $0$):
    \begin{equation*}
         q(i,j,k)=\lim_{z \to 0+}
         \frac{1}{k!}
         \frac{\text{d}^{k}Q_{i,j}(z)}{\text{d} z^{k}}.
    \end{equation*}

    %\medskip

Let us now outline two facts about the expressions of $Q(x,0,z)$,
$Q(0,y,z)$ and~$Q(0,0,z)$ stated in Theorems~\ref{explicit_integral}
and~\ref{w_w_tilde_algebraic}.

\begin{rem}
\label{sdf}{\rm The fact that  (\ref{functional_equation}) is valid
at least on $\{|x|\leq 1, |y|\leq 1, |z|<1/4\}$ gives that for any triplet
$(\hat{x}, \hat{y},z)\in\{|x|\leq 1, |y|\leq 1, |z|<1/4\}$ such that
$L(\hat{x},\hat{y},z)=0$, the right-hand side of
(\ref{functional_equation}) equals zero, in such a way that
     \begin{equation}
     \label{mainzz}
          z\big[Q(\hat{x},0,z)-Q(0,0,z)\big]+z\big[(\hat{y}+1)Q(0,\hat{y},z)-
          Q(0,0,z)\big]+zQ(0,0,z)-\hat{x}\hat{y}=0.
     \end{equation}
We deduce that
     \begin{equation}
     \label{sss1}
          zQ(0,0,z)= -z\big[Q(\hat{x},0,z)-Q(0,0,z)\big]-z\big[(\hat{y}+1)
          Q(0,\hat{y},z)-Q(0,0,z)\big]+\hat{x}\hat{y},
     \end{equation}
where the functions in square brackets in the right-hand side of (\ref{sss1}) are given
thanks to Theorem~\ref{explicit_integral}.

To get the expression of $zQ(0,0,z)$ given in Theorem~\ref{explicit_integral},
we have chosen to substitute
$(\hat{x},\hat{y},z)=(0,-1,z)$ in (\ref{sss1}), which is suitable, since
with Lemma~\ref{Properties_X_Y_0} we have $X_{0}(-1,z)=0$.

Moreover, a consequence of Theorem~\ref{Thm_continuation} is that
(\ref{mainzz}) is valid not only on
$\{L(x,y,z)=0\}\cap \{|x|\leq 1, |y|\leq 1,z\in]0,1/4[\}$ but in a much larger
domain of the algebraic curve $\{L(x,y,z)=0\}$. Namely, if
$(\hat{x},\hat{y},z)$ is such that $z\in]0,1/4[$ and
$\hat{y}=Y_{0}(\hat{x},z)$ or $\hat{x}=$ $X_{0}(\hat{y},z)$, then
(\ref{mainzz}) is still valid. Substituting any triplet
$(\hat{x},\hat{y},z)$ lying in this domain into (\ref{mainzz}) yields
$zQ(0,0,z)$ as in (\ref{sss1}).}
\end{rem}

\begin{rem}
{\rm In Theorem~\ref{explicit_integral}, $z[Q(x,0,z)-Q(0,0,z)]$ and
$z[(y+1)Q(0,y,z)-Q(0,0,z)]$ are written as the sums of two functions
not holomorphic but singular near $[x_{1}(z),x_{2}(z)]$ and
$[y_{1}(z),y_{2}(z)]$ respectively. The sums of these two singular
functions are of course holomorphic near these segments, since the latter
are included in the unit disc, according to Section~\ref{reduc}. By an application of the residue
theorem as in Section~4 of \cite{KR}, we could write both generating
functions as functions manifestly holomorphic near these segments
and having in fact their singularities near $[x_{3}(z),x_{4}(z)]$
and $[y_{3}(z),y_{4}(z)]$ respectively.}
\end{rem}
 We conclude the discussion of Theorems~\ref{Thm_continuation},
 \ref{explicit_integral} and~\ref{w_w_tilde_algebraic} with the following remark.

\begin{rem}
{\rm
With the analytical approach proposed in this article, it would
be possible, without additional difficulty, to obtain explicitly the
generating function of the number of walks of length $k$, starting at an arbitrary
initial state $(i_{0},j_{0})$ and ending at $(i,j)$. Indeed,
the only difference is that the product $x y$ in (\ref{functional_equation})
would then be replaced by $x^{i_{0}+1}y^{j_{0}+1}$.
}
\end{rem}

%\begin{rem}
%Making in Theorem~\ref{explicit_integral} the changes of variables $w=w(t,z)$
%and $\widetilde{w}=\widetilde{w}(t,z)$, we obtain that the generating functions
%$zQ(x,0,z)$ and $z(y+1)Q(0,y,z)$ are essentially Cauchy-type integrals of algebraic
%functions.
%
%In particular, it could be deduced from the work \cite{PAK}---which gives
%criteria for a Cauchy-type integral of an algebraic function to be algebraic---
%that as functions of $x$ and $y$ respectively, $zQ(x,0,z)$ and $z(y+1)Q(0,y,z)$
%are algebraic functions, what would give another proof to some results
%contained in \cite{BK}.
%\end{rem}

The rest of the article is organized as follows. In
Section~\ref{SectionTheorem1}, we prove
Theorem~\ref{explicit_integral}. In Section~\ref{Study_CGF}, we give
the proof of Theorem~\ref{w_w_tilde_algebraic}. There, the general
implicit representation of the CGFs inspired by \cite{FIM} is given
and developed for the case of Gessel's walks. The proof of
Theorem~\ref{Thm_continuation} is postponed to the last
Section~\ref{CONTINUATION}.

\section{Proof of Theorem~6}
\label{SectionTheorem1}

% and Corollary~\ref{explicit_numbers}.
%
%In Section~\ref{Study_CGF} we prove Theorem~\ref{w_w_tilde_algebraic}. There
%the implicit representation of the CGF given in \cite{FIM} (and recalled here in
%Subsections~\ref{Subsection_Uniformization} and~\ref{Implicit_FIM}) in a general
%setting is developed in Subsection~\ref{Proofss} for Gessel's walks.
%
%The proof of Theorem~\ref{Thm_continuation} is postponed to the last
%Section~\ref{CONTINUATION}. The main idea of the holomorphic
%continuation procedure is borrowed again from \cite{FIM}, we show
%how it works with the parameter $z \in]0,1/4[$.
%
%Finally, we give the proof of Theorem~\ref{explicit_integral}.

%\begin{proof}[Proof of Theorem~\ref{explicit_integral}]
The proof is composed of three steps.
\medskip

\noindent{\it Step 1.} We solve the BVPs of
Riemann-Carleman type with conditions (\ref{BVP}) by transforming
them into BVPs of Riemann-Hilbert type as in
Parts 5.2.3--5.2.5 of \cite{FIM} or in Part 17(.5) of \cite{GAK}. The only
notable difference from \cite{FIM} is that the index of our problems
is zero; this is why the solution $zQ(x,0,z)$ is found in
$\mathscr{G}X([y_{1}(z),y_{2}(z)],z)$ up to an additive function of
$z$, as
     \begin{equation}
     \label{intint}
          zQ(x,0,z)=\frac{1}{2\pi\imath}\int_{X([y_{1}(z),y_{2}(z)],z)} tY_{0}(t,z)
          \frac{\partial_{t}w(t,z)}{w(t,z)-w(x,z)}\text{d}t,
          \ \ \forall x \in \mathscr{G}X([y_{1}(z),y_{2}(z)],z)
     \end{equation}
where $w$ is the CGF used for $\mathscr{G}X([y_{1}(z),y_{2}(z)],z)$.
Similarly, we could write an integral representation for
$z(y+1)Q(0,y,z)$, up to some additive function of $z$.

\medskip

\noindent{\it Step 2.}
  We transform these representations into
  the integrals on real segments written in the statement of
Theorem~\ref{explicit_integral}. This step is inspired by \cite{KR}.

  Let $C(\epsilon,z)$ be any contour such that:
     \begin{enumerate}
          \item \label{property_connected}
                $C(\epsilon,z)$ is connected and contains $\infty$;
          \item \label{property_included}
                $C(\epsilon,z)\subset  (\mathscr{G}X([y_{1}(z),y_{2}(z)],z)
                \cup X([y_{1}(z),y_{2}(z)],z))\setminus [x_{1}(z),x_{2}(z)]$;
          \item \label{property_limit}
                $\lim_{\epsilon\to 0}C(\epsilon,z)=X([y_{1}(z),y_{2}(z)],z)
                \cup S(z)$, with $S(z)$ the segment
                $[x_{1}(z),X(y_{2}(z),z)]$
                traversed from $X(y_{2}(z),z)$ to $x_{1}(z)$ along the lower
                edge of the slit and then back to $X(y_{2}(z),z)$ along the
                upper edge.
     \end{enumerate}
\begin{figure}[!ht]
\begin{center}
\begin{picture}(000.00,740.00)
\hspace{-112mm}\includegraphics{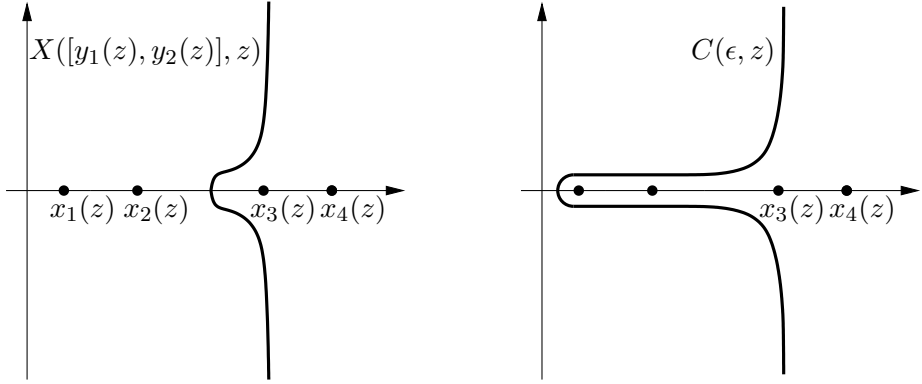}
\end{picture}
\end{center}
\vspace{-213mm}
\caption{The curve $X([y_{1}(z),y_{2}(z)],z)$ and the new contour of integration $C(\epsilon, z)$}
\label{New_contour}
\end{figure}
Let $\mathscr{G}C(\epsilon,z)$ be the connected component of $\mathbb{C}\setminus C(\epsilon,z)$
which does not contain $x_{3}(z)$. Now we apply the residue theorem
to the integrand of (\ref{intint}) on the contour $C(\epsilon,z)$.
Thanks to Lemma~\ref{Properties_X_Y_0} and Property
\ref{property_included} above, $t\mapsto tY_{0}(t,z)$ is holomorphic
in $\mathscr{G}C(\epsilon,z)$. Likewise, by using Definition
\ref{def_CGF} and Property~\ref{property_included}, we reach the conclusion that
$\partial_{t}w(t,z)/[w(t,z)-w(x,z)]$ is meromorphic on
$\mathscr{G}C(\epsilon,z)$, with a single pole at $t=x$. For this
reason,
     \begin{equation}
     \label{application_residue}
          \frac{1}{2\pi\imath}\int_{C(\epsilon,z)} tY_{0}(t,z)
          \frac{\partial_{t}w(t,z)}{w(t,z)-w(x,z)}\text{d}t=xY_{0}(x,z).
     \end{equation}
Then, letting $\epsilon$ tend to $0$, using Equations
(\ref{intint})--(\ref{application_residue}) and Property
\ref{property_limit} of the contour, we derive that, up to an
additive function of $z$,
     \begin{equation}
     \label{S_present}
          zQ(x,0,z)=xY_{0}(x,z)-\frac{1}{2\pi\imath}\int_{S(z)} tY_{0}(t,z)
          \frac{\partial_{t}w(t,z)}{w(t,z)-w(x,z)}\text{d}t.
     \end{equation}
Since for any $x\in\mathscr{G}X([y_{1}(z),y_{2}(z)],z)$, the integrand
of (\ref{S_present}) is,
as a function of the variable $t$,
holomorphic in $]x_{2}(z),X(y_{2}(z),z)[$, we have
     \begin{equation*}
          \int_{S(z)} tY_{0}(t,z)\frac{\partial_{t}w(t,z)}{w(t,z)-w(x,z)}\text{d}t
          =\int_{x_{1}(z)}^{x_{2}(z)}
          \big[tY_{0}^{+}(t,z)-tY_{0}^{-}(t,z)\big]
          \frac{\partial_{t}w(t,z)}{w(t,z)-w(x,z)}\text{d}t.
     \end{equation*}
Using (\ref{values_cuts}), we then immediately obtain the expression
of $z[Q(x,0,z)-Q(0,0,z)]$ stated in Theorem~\ref{explicit_integral}
for $x\in\mathscr{G}X([y_{1}(z),y_{2}(z)],z)$. Likewise, we could
obtain the expression of $z[(y+1)Q(0,y,z)-Q(0,0,z)]$ written in
Theorem~\ref{explicit_integral} for
$y\in\mathscr{G}Y([x_{1}(z),x_{2}(z)],z)$. The formula for
$Q(0,0,z)$ has already been proved in Remark~\ref{sdf}.

\medskip

\noindent{\it Step 3.} In order to complete the proof of
Theorem~\ref{explicit_integral}, we have to show that the integral
representations of $z[Q(x,0,z)-Q(0,0,z)]$ and
$z[(y+1)Q(0,y,z)-Q(0,0,z)]$ hold not only on
$\mathscr{G}X([y_{1}(z),y_{2}(z)],z)$ and
$\mathscr{G}Y([x_{1}(z),x_{2}(z)],z)$ but on the domains
$\mathbb{C}\setminus [x_{3}(z),x_{4}(z)]$ and $\mathbb{C}\setminus
[y_{3}(z),y_{4}(z)]$ respectively.

It is clear that they can be
  continued  up to
  $\mathbb{C}\setminus  ([x_{3}(z),x_{4}(z)]\cup (w^{-1}(w([x_{1}(z),
  x_{2}(z)],$ $z))\setminus [x_{1}(z),
  x_{2}(z)],z))$ and $\mathbb{C}\setminus ([y_{3}(z),y_{4}(z)]
  \cup(\widetilde{w}^{-1}(\widetilde{w}
  ([y_{1}(z),y_{2}(z)],z))\setminus [y_{1}(z),y_{2}(z)]))$ respectively.
  To conclude the proof of Theorem~\ref{explicit_integral},
 it therefore suffices to show that
    $w^{-1}(w([x_{1}(z),x_{2}(z)],z))
  \setminus [x_{1}(z),x_{2}(z)]=\emptyset$ and
   $\widetilde{w}^{-1}(\widetilde{w}([y_{1}(z),y_{2}(z)],z))
  \setminus [y_{1}(z),y_{2}(z)]=\emptyset$. This is the subject of
Proposition~\ref{cc}; it is postponed to Section~\ref{Study_CGF}
because all necessary facts about the functions $w$ and $\widetilde{w}$ are
proved there.
$\hfill \square$

\section{Study of the conformal gluing functions}
\label{Study_CGF}

\noindent{\bf Notation.} To be concise we drop,
from now on, the dependence on $z$ of all quantities.

\medskip

The main subject of Section~\ref{Study_CGF} is to prove Theorem~\ref{w_w_tilde_algebraic}. For
this we shall define two functions, namely $w$ in (\ref{expression_CGFx}) and $\widetilde{w}$
in (\ref{expression_CGFy}), which thanks to Part~5.5 of \cite{FIM} are known to be suitable
CGFs for the sets $\mathscr{G}X([y_{1},y_{2}])$ and $\mathscr{G}Y([x_{1},x_{2}])$ respectively.
We shall then show that these functions satisfy the conclusions of Theorem~\ref{w_w_tilde_algebraic}.

%These definitions of the CGFs given in \cite{FIM} are recalled here in Subsection~\ref{Implicit_FIM},
%see particularly (\ref{expression_CGFx}) and (\ref{expression_CGFy}). They require to define some functions
%on a uniformization of the algebraic curve $\{(x,y)\in \mathbb{C}^{2}: L(x,y,z)=0\}$,
%so that we begin Section~\ref{Study_CGF} by studying a suitable uniformization of this curve
%---note that this Subsection~\ref{Subsection_Uniformization} is also necessary in
%Section~\ref{CONTINUATION}, where we will prove Theorem~\ref{Thm_continuation}.

\subsection{Uniformization}
\label{Subsection_Uniformization}

\medskip

Let us begin Section~\ref{Study_CGF} with studying a {\it uniformization} of the algebraic curve
$\mathscr{L}=\{(x,y)\in (\mathbb{C}\cup\{\infty\})^{2}: L(x,y,z)=0\}$,
where $L(x,y,z)=
x y z \big[1/x+1/(x y)+x + x y -1/z\big]
$
is the kernel appearing in (\ref{functional_equation}).

     \begin{prop}
     \label{genus_Riemann_surface}
          For any $z\in]0,1/4[$, $\mathscr{L}$ is a Riemann surface of genus one.
     \end{prop}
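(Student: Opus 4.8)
The plan is to compute the genus of $\mathscr{L}$ via the Riemann--Hurwitz formula applied to the projection $(x,y)\mapsto y$ (equivalently $(x,y)\mapsto x$), exactly as in Chapter~3 of \cite{FIM}. Since $L$ is quadratic in $x$, this projection realizes $\mathscr{L}$ as a two-sheeted branched covering of the Riemann sphere $\mathbb{C}\cup\{\infty\}$, the two sheets being glued along the branches $X_0(y,z)$ and $X_1(y,z)$. A genus-one surface is then characterized by having exactly four branch points, and indeed the discussion preceding Lemma~\ref{Properties_X_Y_0} already identifies them: the four roots $y_1(z)=0$, $y_2(z)$, $y_3(z)$ and $y_4(z)=\infty$ of $\widetilde{d}(y,z)=\widetilde{b}(y,z)^2-4\widetilde{a}(y,z)\widetilde{c}(y,z)$, since $L(x,y,z)=0$ is equivalent to $[\widetilde{b}(y,z)+2\widetilde{a}(y,z)x]^2=\widetilde{d}(y,z)$, and the two roots $X_0(y,z)$, $X_1(y,z)$ coincide precisely when $\widetilde{d}(y,z)=0$.

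First I would check that $\widetilde{d}(y,z)$, viewed correctly on the sphere, has exactly four zeros counted with multiplicity and that each gives a genuine simple branch point (ramification index $2$) of the covering $\mathscr{L}\to\mathbb{C}\cup\{\infty\}$, $(x,y)\mapsto y$. Away from $y=0$ and $y=\infty$ one has $\widetilde{d}(y,z)=y^2-4z^2y(y+1)^2$; the factor $y$ reflects that $\widetilde{a}(y,z)=zy(y+1)$ and $\widetilde{c}(y,z)=z(y+1)$ vanish at $y=0$, so some care near $y=0$ and near $y=\infty$ is needed to see that these are true simple branch points of the curve rather than artifacts of the coordinate $x$ (one checks the local behaviour of $X_0,X_1$ there using Lemma~\ref{Properties_X_Y_0}, which records that both branches blow up at $y_1(z)=0$ and vanish to the appropriate order at $y_4(z)=\infty$). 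One also verifies that over $y=-1$, where $\widetilde{a}$ and $\widetilde{c}$ both vanish but $\widetilde{d}(-1,z)=1\neq0$, the two points of $\mathscr{L}$ (namely $X_0(-1,z)=0$ and $X_1(-1,z)=\infty$) are distinct and unramified, so $y=-1$ contributes nothing. This bookkeeping shows the covering has degree $2$ with exactly $4$ ramification points, each of ramification index $2$.

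Then Riemann--Hurwitz gives $2-2g=2\cdot(2-2\cdot 0)-\sum_{\text{branch pts}}(e_P-1)=4-4=0$, hence $g=1$. I would phrase this as: the normalization $\mathscr{L}$ of the curve $\{L=0\}$ is a compact Riemann surface which double-covers $\mathbb{P}^1$ branched at four points, and any such surface has genus one; equivalently it is the elliptic curve $w^2=\prod_{k=1}^{4}(y-y_k(z))$ after the standard change of variables $w=[\widetilde{b}(y,z)+2\widetilde{a}(y,z)x]/(\text{suitable factor})$. The symmetric computation with the projection $(x,y)\mapsto x$, using instead the four roots $x_1(z),\dots,x_4(z)$ of $d(x,z)$, gives the same answer and serves as a consistency check. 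The main (minor) obstacle is purely the local analysis at the ``degenerate'' values $y\in\{0,-1,\infty\}$ of the $y$-coordinate, where the leading coefficient $\widetilde{a}(y,z)$ of $L$ in $x$ degenerates: one must confirm that $y_1(z)=0$ and $y_4(z)=\infty$ really are simple branch points of the intrinsic curve and that $y=-1$ is not a branch point at all, so that the branch-point count is exactly four; everything else is the routine Riemann--Hurwitz arithmetic. Since the authors explicitly say the proofs of Lemmas~\ref{Properties_X_Y_0} and~\ref{Properties_curves_0} extend from the case $z=1/4$ treated in Part~5.3 of \cite{FIM}, I would likewise invoke \cite{FIM} for the detailed verification of the branch-point structure and keep the write-up to the Riemann--Hurwitz count.
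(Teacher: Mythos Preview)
Your proposal is correct and follows essentially the same idea as the paper: realize $\mathscr{L}$ as a two-sheeted covering of the Riemann sphere branched at four points, hence of genus one. The paper's proof is shorter only because it uses the $x$-projection and the substitution $w=b(x)+2a(x)y$, reducing to $w^{2}=d(x)$ with $d$ a polynomial having four distinct simple roots $x_{1},\dots,x_{4}$; it then cites the classical fact (Part~4.9 of \cite{JS}) that the Riemann surface of the square root of such a polynomial has genus one, thereby bypassing the explicit Riemann--Hurwitz count and the local analysis at degenerate fibres that your $y$-projection approach requires.
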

\begin{proof}
We have shown in Section~\ref{reduc} that $L(x,y,z)=0$
if and only if $[b(x)+2a(x)y]^{2}=d(x)$. The Riemann surface of
the square root of a polynomial which has four distinct roots of order
one having genus one (see e.g.\ Part 4.9 in \cite{JS}, particularly pages 162--163),
%Part 4.9 in \cite{JS} corresponds to 157--163
the genus
of $\mathscr{L}$ is also one.
\end{proof}

With Proposition~\ref{genus_Riemann_surface}, it is immediate that
$\mathscr{L}$ is isomorphic to some torus. In other words, there
exists a two-dimensional lattice $\Omega$ such that $\mathscr{L}$
is isomorphic to $\mathbb{C}/\Omega$. Such a suitable lattice
$\Omega$ (in fact the \textit{only possible} lattice, up to
a homothetic transformation)
is found explicitly in Parts 3.1 and 3.3 of \cite{FIM}, namely
$\omega_{1}\mathbb{Z}+\omega_{2}\mathbb{Z}$, where
     \begin{equation}
     \label{def_omega_1_2}
          \omega_{1}= \imath\int_{x_{1}}^{x_{2}}
          \frac{\text{d}x}{[-d(x)]^{1/2}},
          \ \ \ \ \ \ \ \ \ \ \omega_{2}= \int_{x_{2}}^{x_{3}}
          \frac{\text{d}x}{[d(x)]^{1/2}}.
     \end{equation}

We shall now give a uniformization of surface $\mathscr{L}$,
\text{i.e.}\ we shall make explicit $x(\omega)$ and $y(\omega)$,
two functions elliptic w.r.t.\ lattice $\Omega$ and such that
$\mathscr{L}=\{(x(\omega),y(\omega)), \omega\in
\mathbb{C}/\Omega\}$. By using the same arguments as in Part 3.3 of
\cite{FIM}, we immediately see that we can take
%     \begin{equation}
%     \label{uniformization}
%          \left\{\begin{array}{ccc}
%          x(\omega)&=&\displaystyle \phantom{\frac{1}{\frac{1}{\frac{1}{1}}}}
%          x_{4}+\frac{d'(x_{4})}
%          {\wp(\omega)-d''(x_{4})/6},
%          \phantom{\frac{1}{\frac{1}{\frac{1}{1}}}}\\
%          y(\omega)&=&\displaystyle \frac{1}{2a(x(\omega))}\left[-b(x(\omega))+\frac{
%          d'(x_{4})\wp'(\omega)}{2(\wp(\omega)-
%          \partial_{1}^{2}d''(x_{4})/6)^{2}}\right].
%          \end{array}\right.
%     \end{equation}
     \begin{equation}
     \label{uniformization}
          x(\omega)=x_{4}+\frac{d'(x_{4})}
          {\wp(\omega)-d''(x_{4})/6},
          \ \ y(\omega)=\displaystyle \frac{1}{2a(x(\omega))}
          \left[-b(x(\omega))+\frac{d'(x_{4})
          \wp'(\omega)}{2[\wp(\omega)-
          d''(x_{4})/6]^{2}}\right],
     \end{equation}
where $\wp$ is the Weierstrass elliptic function with periods
$\omega_{1},\omega_{2}$.

%By convenience, we will consider, from now on, that the coordinates of the uniformization
%$x$ and $y$ are defined on $\mathbb{C}/\Omega$ rather than on $\mathbb{C}$.

It is well-known (see e.g.\ (6.7.26) on page $159$ in \cite{LAW}) that $\wp$ is characterized
by its invariants $g_{2},g_{3}$ through
     \begin{equation}
     \label{relation_wp_wp'}
          \wp'(\omega)^{2}=4\wp(\omega)^{3}-g_{2}\wp(\omega)-g_{3}.
     \end{equation}
     \begin{lem}
     \label{lemma_g_2_g_3}
     Invariants $g_{2},g_{3}$ of $\wp$ are equal to:
     \begin{equation}
     \label{g_2_g_3}
          g_{2}=(4/3)\big(1-16z^{2}+16z^{4}\big),
          \ \ \ \ \ g_{3}=-(8/27)\big(1-8z^{2}\big)
          \big(1-16z^{2}-8z^{4}\big).
     \end{equation}
     \end{lem}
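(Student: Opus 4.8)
The plan is to compute the invariants $g_2,g_3$ directly from the uniformization formulas in (\ref{uniformization}), using the fact that the elliptic curve $\mathscr{L}$ is birationally the curve $\eta^2=d(x)$ with $d$ the quartic $d(x,z)=b(x,z)^2-4a(x,z)c(x,z)$. First I would expand $d(x,z)$ explicitly: since $a(x,z)=zx^2$, $b(x,z)=zx^2-x+z$, $c(x,z)=z$, we get $d(x,z)=(zx^2-x+z)^2-4z^2x^2\cdot z = z^2x^4-2zx^3+(1+2z^2-4z^3)x^2-2zx+z^2$ — or rather I would keep track of the coefficients carefully. Then $x_4=x_4(z)$ is one of its roots, and the substitution $x(\omega)=x_4+d'(x_4)/(\wp(\omega)-d''(x_4)/6)$ is the standard Weierstrass parametrization of $\eta^2=d(x)$; under it, $\wp$ has invariants that can be read off from the coefficients of $d$.

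The key computational tool is the classical formula expressing the invariants of the Weierstrass $\wp$ attached to $\eta^2=d(x)$ directly in terms of the coefficients of the quartic. Writing $d(x)=a_0x^4+4a_1x^3+6a_2x^2+4a_3x+a_4$ (or with whatever normalization is cleanest), the two fundamental invariants of the binary quartic are
\begin{equation*}
I=a_0a_4-4a_1a_3+3a_2^2,\qquad J=a_0a_2a_4-a_0a_3^2-a_1^2a_4+2a_1a_2a_3-a_2^3,
\end{equation*}
and then $g_2=I$ (up to a normalizing constant depending on the chosen scaling) and $g_3=J$ (likewise). So the second step is to match the quartic $d(x,z)$ to this template, extract $a_0,\dots,a_4$ as polynomials in $z$, and plug into the formulas for $I$ and $J$. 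This is a finite polynomial computation in $z$. I would then simplify, factor where possible, and check that the outcome matches $g_2=(4/3)(1-16z^2+16z^4)$ and $g_3=-(8/27)(1-8z^2)(1-16z^2-8z^4)$. As a sanity check one can set $z=1/4$ and compare with the values implicitly used in \cite{FIM} for Gessel's $z=1/4$ case, and one can also verify the relation (\ref{relation_wp_wp'}) is consistent with (\ref{uniformization}) — indeed the whole point is that $\wp'(\omega)^2=4\wp(\omega)^3-g_2\wp(\omega)-g_3$ must hold identically in $\omega$ once $y(\omega)$ is required to satisfy $[b(x(\omega))+2a(x(\omega))y(\omega)]^2=d(x(\omega))$.

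There is one subtlety to handle carefully: the invariants $g_2,g_3$ are attached to a specific lattice $\Omega=\omega_1\mathbb{Z}+\omega_2\mathbb{Z}$ with $\omega_1,\omega_2$ given by the elliptic integrals in (\ref{def_omega_1_2}), and the quartic-invariant formula for $I,J$ determines $g_2,g_3$ only up to the scaling ambiguity $\wp\mapsto\lambda^{-2}\wp(\lambda\,\cdot)$, $(g_2,g_3)\mapsto(\lambda^{4}g_2,\lambda^{6}g_3)$. The uniformization (\ref{uniformization}) fixes this scaling: the choice of $d'(x_4)$ as numerator and $d''(x_4)/6$ as the shift is precisely the one that pins down $\wp$ to the correct normalization. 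So I would verify that, with that exact normalization, the coefficient in the relation between $(I,J)$ and $(g_2,g_3)$ is $1$ and $-1$ respectively (or whatever the standard references — \cite{FIM} Part 3.3, or \cite{WW} — give), rather than carrying an undetermined $\lambda$. Concretely: the residue of $x(\omega)$ at the pole (where $\wp(\omega)=d''(x_4)/6$) should be $d'(x_4)$, and this is the condition that fixes the scaling.

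The main obstacle I expect is purely the bookkeeping: correctly expanding $d(x,z)$, correctly identifying $x_4(z)$ and the derivatives $d'(x_4), d''(x_4)$ (which involve the square root $(1+4z)^{1/2}$ and so require care to avoid sign errors), and then carrying out the invariant computation so that the $z$-polynomials simplify to the stated clean factored forms. No conceptual difficulty arises — this is the standard reduction of an elliptic curve in quartic form to Weierstrass form — but the algebra is heavy enough that it is worth organizing it as: (1) expand $d(x,z)$; (2) write down the quartic invariants $I,J$ in terms of its coefficients; (3) substitute and simplify; (4) confirm the normalization against the uniformization. A good shortcut that avoids ever touching $x_4$ explicitly is to use that $I$ and $J$ are $\mathrm{SL}_2$-covariant expressions in the coefficients of the quartic alone, so steps (1)--(3) suffice and step (4) only needs to be checked once, abstractly, against the reference formula in \cite{FIM}.
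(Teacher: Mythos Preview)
Your approach is correct and well-organized, but it differs from the paper's. The paper does not invoke the classical $\mathrm{SL}_2$-invariants $I,J$ of the binary quartic; instead it uses the factorization $4\wp^3-g_2\wp-g_3=4[\wp-\wp(\omega_1/2)][\wp-\wp((\omega_1+\omega_2)/2)][\wp-\wp(\omega_2/2)]$ together with the fact, built into the uniformization (\ref{uniformization}), that the half-period values are $\wp(\omega_1/2)=f(x_3)$, $\wp((\omega_1+\omega_2)/2)=f(x_2)$, $\wp(\omega_2/2)=f(x_1)$ with $f$ as in (\ref{ffu}). The invariants $g_2,g_3$ are then the elementary symmetric functions of these three values, and a direct calculation in $z$ yields (\ref{g_2_g_3}). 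Your route via $I,J$ has the advantage of bypassing the individual roots $x_k$ entirely---the invariants are polynomial in the coefficients of $d$, so the square roots $(1\pm 4z)^{1/2}$ never enter---whereas the paper's route is more self-contained (no external invariant-theory formula is needed, and the normalization is automatic) and reuses the identities $\wp(\cdot)=f(x_k)$ that the paper needs later anyway, e.g.\ in Lemma~\ref{ll}. Either way one lands on a short polynomial simplification in $z$.
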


\begin{proof}
We have that $4\wp(\omega)^{3}-g_{2}\wp(\omega)-g_{3}=
4[\wp(\omega)-\wp(\omega_{1}/2)][\wp(\omega)-\wp
([\omega_{1}+\omega_{2}]/2)][\wp(\omega)-\wp(\omega_{2}/2)]$, see
e.g.\ (6.7.16) on page $158$ and (6.7.26) on page $159$ in \cite{LAW}.
In particular, invariants $g_{2},g_{3}$ can be
expressed in terms of the values of $\wp$ at the half-periods. But
it is clear by construction---and proved in Part 3.3 of \cite{FIM}---that setting
     \begin{equation}
     \label{ffu}
          f(t)=\frac{d''(x_{4})}{6}+\frac{d'(x_{4})}{t-x_{4}},
     \end{equation}
we have $\wp(\omega_{1}/2) =f(x_{3})$,
$\wp([\omega_{1}+\omega_{2}]/2)=f(x_{2})$ and
$\wp(\omega_{2}/2)=f(x_{1})$. Lemma~\ref{lemma_g_2_g_3} follows then
from a direct calculation.
\end{proof}

%With Equations (\ref{def_omega_1_2}), (\ref{uniformization})
%and Lemma~\ref{lemma_g_2_g_3}, the uniformization $(x(\omega),y(\omega))$ is completely
%explicit.

For an upcoming use, we would like now to know the inverse images through
the uniformization of the important cycles that are the branch cuts, that is to say
$x^{-1}([x_{1},x_{2}])$, $x^{-1}([x_{3},x_{4}])$, $y^{-1}([y_{1},y_{2}])$
and $y^{-1}([y_{3},y_{4}])$. In this perspective, we introduce a new period, namely
     \begin{equation}
     \label{def_omega_3}
          \omega_{3}= \int_{-\infty}^{x_{1}}
          \frac{\text{d}x}{[d(x)]^{1/2}}.
     \end{equation}
We will extensively use that $\omega_{3}\in]0,\omega_{2}[$---this fact is proved in Lemma~3.3.3 on page 47 of \cite{FIM}.
     \begin{prop}
     \label{transformation_cycles_uniformization}
          We have $x^{-1}([x_{1},x_{2}])=[0,\omega_{1}[+\omega_{2}/2$ as well as
          $x^{-1}([x_{3},x_{4}])=[0,\omega_{1}[$, $y^{-1}([y_{1},y_{2}])
          =[0,\omega_{1}[+[\omega_{2}+\omega_{3}]/2$ and $y^{-1}([y_{3},y_{4}])=
          [0,\omega_{1}[+\omega_{3}/2$.
     \end{prop}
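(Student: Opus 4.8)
The plan is to identify each of the four branch-cut cycles on the torus $\mathbb{C}/\Omega$ by exploiting the explicit uniformization \eqref{uniformization}. The key observation is that the function $f$ of \eqref{ffu} — whose inverse enters the formula for $x(\omega)$ via $\wp(\omega)=f(x(\omega))$ — is a Möbius map sending the four branch points $x_1,x_2,x_3,x_4$ to the three half-period values of $\wp$ together with $\wp(0)=\infty$; precisely, by the computation recalled in the proof of Lemma~\ref{lemma_g_2_g_3}, $f(x_3)=\wp(\omega_1/2)$, $f(x_2)=\wp([\omega_1+\omega_2]/2)$, $f(x_1)=\wp(\omega_2/2)$, while $f(x_4)=\infty=\wp(0)$. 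First I would recall the standard fact (see e.g.\ Part~3.3 of \cite{FIM}) that $\wp$ restricted to each of the four segments $[0,\omega_1/2]$, $\omega_1/2+[0,\omega_2/2]$, $[0,\omega_2/2]$, and the segment joining the appropriate half-periods, is a real monotone bijection onto one of the four real intervals cut out by the three half-period values; combining this with the monotonicity of the Möbius map $t\mapsto f^{-1}$ on each interval, one gets that $x(\omega)$ maps the relevant half-period segment (translated to a full $\omega_1$-period since $\wp$ is even and $\omega_1$-periodic along these real cycles) bijectively onto $[x_1,x_2]$ or $[x_3,x_4]$.

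Concretely, for $x^{-1}([x_3,x_4])$: since $x_4\mapsto\infty$ under $f$ and $x_3\mapsto\wp(\omega_1/2)$, and the segment $[x_3,x_4]$ corresponds under $f$ to the half-line $[\wp(\omega_1/2),\infty]$, which is exactly $\wp([0,\omega_1/2])$, I conclude $x^{-1}([x_3,x_4])=[0,\omega_1[$ (the full period line, because the branch cut is traversed there-and-back while $\wp$ is even about $0$). Similarly $[x_1,x_2]$ corresponds to the $\wp$-image of the half-period segment from $\omega_2/2$ to $[\omega_1+\omega_2]/2$, i.e.\ $\omega_2/2+[0,\omega_1/2]$, so $x^{-1}([x_1,x_2])=[0,\omega_1[+\omega_2/2$. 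For the $y$-cycles I would argue symmetrically, but now using that the same Riemann surface carries a second Möbius-type relation expressing $\wp$ in terms of $y$ through the branch points $y_1,y_2,y_3,y_4$; the new feature is the shift by $\omega_3/2$, which has to come from locating where on $\mathbb{C}/\Omega$ the point $(x,y)$ with $x\to-\infty$ (equivalently $x=x_1^-$, the left end of the cut giving $\omega_3$ in \eqref{def_omega_3}) sits. Indeed $\omega_3=\int_{-\infty}^{x_1}\mathrm{d}x/[d(x)]^{1/2}$ measures precisely the $\omega_2$-displacement between the $x$-cut and the $y$-cut, so $y^{-1}([y_3,y_4])=[0,\omega_1[+\omega_3/2$ and $y^{-1}([y_1,y_2])=[0,\omega_1[+[\omega_2+\omega_3]/2$ follow by adding this shift to the two $x$-cycle positions, using that $\omega_3\in]0,\omega_2[$ to see the resulting representatives lie in the fundamental parallelogram.

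The step I expect to be the main obstacle is pinning down the shift $\omega_3/2$ rigorously, i.e.\ proving that the $y$-branch-cut cycles are translates of the $x$-branch-cut cycles by exactly $\omega_3/2$ in the $\omega_2$-direction rather than by some other amount. This is really a statement about the relative homology classes of the two pairs of cuts on $\mathscr{L}$ and requires tracking how the differential $\mathrm{d}x/[d(x)]^{1/2}$ (which equals, up to sign and constant, $\mathrm{d}\omega$) integrates along a path on the surface connecting a point over an $x$-cut to a point over a $y$-cut; the cleanest route is to quote the corresponding computation in \cite{FIM} (Lemma~3.3.3 and the surrounding discussion, where exactly this period $\omega_3$ is introduced and its role identified), and then check that the orientation conventions match ours, so that the four claimed equalities hold as stated. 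Everything else — the monotonicity of $\wp$ on real segments, the Möbius behaviour of $f$, and the fact that each full branch cut lifts to a full $\omega_1$-period line because of the evenness of $\wp$ — is routine once the uniformization \eqref{uniformization} and Lemma~\ref{lemma_g_2_g_3} are in hand.
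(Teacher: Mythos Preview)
Your proposal is correct and takes essentially the same approach as the paper: the paper does not give a self-contained proof of this proposition but simply states that it ``follows from repeating the arguments given in Part~3.3 of \cite{FIM},'' and your sketch is precisely an elaboration of those arguments (the M\"obius map $f$ sending $x_k$ to half-period values of $\wp$, monotonicity of $\wp$ on the half-period segments, and the identification of the shift $\omega_3/2$ via the period integral defining $\omega_3$). You even anticipate the correct place to look---Lemma~3.3.3 and the surrounding discussion in \cite{FIM}---for the delicate step of locating the $y$-cycles relative to the $x$-cycles.
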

Proposition~\ref{transformation_cycles_uniformization} follows from repeating
the arguments given in Part~3.3 of \cite{FIM}. It is illustrated in Figure
\ref{Locations_of_the_cycles}.

\medskip

Let $S(x,y)=1/x+1/(x y)+x + xy$ be the jump generating function
of Gessel's walks. Consider the two birational transformations
of $(\mathbb{C}\cup\{\infty\})^{2}$
     \begin{equation*}
     \label{expression_psi_phi}
          \Psi(x,y)=\left(x,\frac{1}{x^{2}y}\right),
          \ \ \ \ \ \Phi(x,y)=\left(\frac{1}{x y},y\right).
     \end{equation*}

They satisfy $\Psi^{2}=\Phi^{2}=\text{id}$ and $S\circ \Psi=S\circ \Phi=S$.
Then, as in Part 2.4 of \cite{FIM}, we define the \textit{group of the walk}
as group $G$ generated by $\Psi$ and $\Phi$. It is shown in
\cite{BMM} that for Gessel's walks, $G$ is of order eight: in other
words, $\inf \{n>0: (\Phi\circ\Psi)^{n}=\text{id}\}=4$.

If $(x,y)\in(\mathbb{C}\cup\{\infty\})^{2}$ is such that $L(x,y,z)=0$ and if $\theta$ is any
element of $G$, then clearly $L(\theta(x,y),z)=0$. This implies that group
$G$ can also be understood as a group of automorphisms of the algebraic curve
$\mathscr{L}$.
It is then shown in (3.1.6) and (3.1.8) in Part 3.1 of \cite{FIM} that the automorphisms $\Psi$ and $\Phi$
of $\mathscr{L}$ become the automorphisms of $\mathbb{C}/\Omega$
     \begin{equation}
     \label{expression_automorphisms_torus}
          \psi(\omega)=-\omega,
          \ \ \ \ \ \phi(\omega)=-\omega+\omega_{3}
     \end{equation}
respectively. They are such that $\psi^{2}=\phi^{2}=\text{id}$,
$x\circ\psi=x$, $y\circ\psi=1/(x^{2}y)$, $x\circ\phi=1/(x y)$ and
$y\circ\phi=y$.

A crucial fact is the following.
     \begin{prop}
     \label{www}
     \label{omega_3=(3/4)omega_2}
          For all $z\in]0,1/4[$, we have $\omega_{3}=3\omega_{2}/4$.
     \end{prop}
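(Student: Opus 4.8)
The plan is to first translate the fact that the group $G$ has order eight into an arithmetic condition on $\omega_{3}$ modulo the lattice $\Omega$, which leaves only the two possibilities $\omega_{3}=\omega_{2}/4$ and $\omega_{3}=3\omega_{2}/4$, and then to discard $\omega_{2}/4$ by a short positivity argument drawn from the integrals defining $\omega_{2}$ and $\omega_{3}$.

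First I would use (\ref{expression_automorphisms_torus}): since $\psi(\omega)=-\omega$ and $\phi(\omega)=-\omega+\omega_{3}$, we have $(\phi\circ\psi)(\omega)=\omega+\omega_{3}$, so the automorphism of $\mathbb{C}/\Omega$ induced by $\Phi\circ\Psi$ is the translation by $\omega_{3}$, and its order equals $\min\{n\geq 1:n\omega_{3}\in\Omega\}$. Because $G$ has order eight, that is $\inf\{n>0:(\Phi\circ\Psi)^{n}=\mathrm{id}\}=4$, this gives $4\omega_{3}\in\Omega=\omega_{1}\mathbb{Z}+\omega_{2}\mathbb{Z}$ while $\omega_{3},2\omega_{3},3\omega_{3}\notin\Omega$. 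Writing $4\omega_{3}=p\omega_{1}+q\omega_{2}$ with $p,q\in\mathbb{Z}$ and taking imaginary parts---recall that $\omega_{1}\in\imath\mathbb{R}\setminus\{0\}$ whereas $\omega_{3}\in\,]0,\omega_{2}[\,\subset\mathbb{R}$ by Lemma~3.3.3 of \cite{FIM}---one obtains $p=0$, hence $4\omega_{3}=q\omega_{2}$ with $0<q<4$; the case $q=2$ is impossible since it would give $2\omega_{3}=\omega_{2}\in\Omega$. So $\omega_{3}\in\{\omega_{2}/4,\,3\omega_{2}/4\}$.

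It then suffices to prove the inequality $\omega_{3}>\omega_{2}/2$. I would start from the period identity
\begin{equation*}
     \omega_{2}=\int_{-\infty}^{x_{1}}\frac{\mathrm{d}x}{[d(x)]^{1/2}}
     +\int_{x_{4}}^{+\infty}\frac{\mathrm{d}x}{[d(x)]^{1/2}} ,
\end{equation*}
obtained exactly as the analogous relations in Part~3.3 of \cite{FIM}: the two components of the real locus of $\mathscr{L}$ lie over $[x_{2},x_{3}]$ and over $[x_{4},+\infty]\cup[-\infty,x_{1}]$, they represent the same class in $H_{1}$ of the torus, and hence the two corresponding periods of the holomorphic differential $\mathrm{d}x/[d(x)]^{1/2}$, being positive reals, coincide. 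Next I would use the factorisation $d(x)=[zx^{2}-(1+2z)x+z]\,[zx^{2}-(1-2z)x+z]$: each quadratic factor is palindromic, so $x_{1}x_{4}=x_{2}x_{3}=1$, equivalently $d(1/x)=d(x)/x^{4}$. Since $d>0$ on $(0,x_{1})$ (recall $0<x_{1}<x_{2}<1<x_{3}<x_{4}$), the substitution $u=1/x$ gives $\int_{x_{4}}^{+\infty}\mathrm{d}x/[d(x)]^{1/2}=\int_{0}^{x_{1}}\mathrm{d}x/[d(x)]^{1/2}$, whence
\begin{equation*}
     \omega_{2}-\omega_{3}=\int_{0}^{x_{1}}\frac{\mathrm{d}x}{[d(x)]^{1/2}} ,
     \qquad
     2\omega_{3}-\omega_{2}=\int_{-\infty}^{0}\frac{\mathrm{d}x}{[d(x)]^{1/2}}>0 ,
\end{equation*}
the last integral being convergent ($d(x)\sim z^{2}x^{4}$ as $x\to-\infty$) and strictly positive ($d>0$ on $(-\infty,0)$). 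Therefore $\omega_{3}>\omega_{2}/2$, which together with the two candidates above forces $\omega_{3}=3\omega_{2}/4$.

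The only step requiring care is the period identity: one has to pin down orientation and normalisation conventions, but since both sides are manifestly positive real numbers that can differ at most by a sign, the identification is automatic once one knows they are periods of the same holomorphic differential over homologous cycles---all of which is already part of the uniformization set up in Part~3.3 of \cite{FIM}. As a consistency check, one can verify that as $z\to 0^{+}$ both sides of the period identity are asymptotic to $4\log(1/z)$ while $\omega_{3}\sim 3\log(1/z)$, in agreement with $\omega_{3}/\omega_{2}\to 3/4$.
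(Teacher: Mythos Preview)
Your proof is correct. The first half---reducing to the dichotomy $\omega_{3}\in\{\omega_{2}/4,3\omega_{2}/4\}$ via the order of $\phi\circ\psi$---is exactly the paper's argument, stated with a little more detail (you make explicit that $\omega_{1}\in\imath\mathbb{R}$ forces $p=0$, whereas the paper simply invokes $\omega_{3}\in\,]0,\omega_{2}[$).

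The second half differs. The paper does not argue directly: it appeals to Section~4 of \cite{KR} and to the positivity of the covariance of Gessel's walks to conclude $\omega_{3}>\omega_{2}/2$. You instead give a self-contained computation: the period identity $\omega_{2}=\int_{-\infty}^{x_{1}}+\int_{x_{4}}^{+\infty}$ together with the palindromic symmetry $d(x)=x^{4}d(1/x)$ (equivalently $x_{1}x_{4}=x_{2}x_{3}=1$, which also appears later in the paper in the proof of Proposition~\ref{transformation_circles_uniformization}) yields $2\omega_{3}-\omega_{2}=\int_{-\infty}^{0}\mathrm{d}x/[d(x)]^{1/2}>0$. This is more elementary and keeps the proof internal to the paper, at the cost of invoking the homology fact that the two real ovals of the genus-one curve give the same real period---a fact you correctly flag as the only delicate point and justify adequately (both integrals are positive real periods of the same holomorphic differential over primitive homologous cycles, hence equal). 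Either route is fine; yours has the advantage of being fully explicit.
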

\begin{proof}
Since the group generated by $\Psi$ and $\Phi$ is of order eight,
the group generated by
$\psi$ and $\phi$ is also of order eight, for any $z\in]0,1/4[$,
see Section~3 of \cite{groupinfinite}.
In other words, $\inf \{n>0: (\phi\circ\psi)^{n}
=\text{id}\}=4$. With (\ref{expression_automorphisms_torus}),
this immediately implies that $4\omega_{3}$
is some point of the lattice $\Omega$, contrary to $\omega_{3}$, $2\omega_{3}$
and $3\omega_{3}$. But we already know that
$\omega_{3}\in]0,\omega_{2}[$, so that two possibilities remain:
either $\omega_{3}=\omega_{2}/4$ or $\omega_{3}=3\omega_{2}/4$.

In addition, essentially because the covariance of Gessel's walks is
positive, we can use the same arguments as in Section~4 of \cite{KR}
and this way, we conclude that $\omega_{3}$ has to be larger than
$\omega_{2}/2$, which finally yields Proposition~\ref{omega_3=(3/4)omega_2}.
%By the same argument as in Section~4 of \cite{KR}, this is equivalent to the fact
%that the quantity
%     \begin{equation*}
%          \left|\begin{array}{ccc}
%          1&\phantom{-}1&0\\
%          0&-z&0\\
%          0&\phantom{-}1&1
%          \end{array}\right|
%     \end{equation*}
%is negative, which is manifestly the case.
\end{proof}

%Let us pursue Section~\ref{Study_CGF} by the study of the implicit expression and the global properties of the CGFs.

\subsection{Global properties of CGFs}
\label{Implicit_FIM}

As said in Section~\ref{Section_results}, the \textit{existence} of CGFs for the sets
$\mathscr{G}X([y_{1},y_{2}])$ and $\mathscr{G}Y([x_{1},x_{2}])$ follows from general
results on conformal gluing, see e.g.\ page 130 of Part 17.5 in \cite{GAK}. Finding
\textit{explicit} expressions for CGFs is far more problematic: indeed, except for a
few particular cases, like e.g.\ discs or ellipses, obtaining the expression of a CGF
for a given set is, in general, quite a difficult task.

But by using the same analysis as in Part 5.5 of \cite{FIM}, we obtain explicitly
suitable CGFs for $\mathscr{G}X([y_{1},y_{2}])$ and $\mathscr{G}Y([x_{1},x_{2}])$.
Before writing the expression of these CGFs, we
recall that $\wp$, and thus also $x$ with (\ref{uniformization}), take each value of $\mathbb{C}\cup\{\infty\}$
twice on $[0,\omega_{2}[\times [0,\omega_{1}/\imath[$, but are one-to-one from
the half-parallelogram $(]0,\omega_{2}/2[\times [0,\omega_{1}/\imath[)\cup [0,\omega_{1}/2] \cup ([0,\omega_{1}/2]+\omega_{2}/2)$
onto $\mathbb{C}\cup \{\infty\}$---indeed, see Corollary~3.10.8 in \cite{JS} and remember that $\wp$ is even. In particular, on the latter
domain, $x$ admits an inverse function, that we denote by $x^{-1}$.

Then, with Part~5.5.2.1 of \cite{FIM}, we state:
     \begin{equation}
     \label{expression_CGFx}
          w(t)=\wp_{1,3} \big(x^{-1}(t)-[\omega_{1}+\omega_{2}]/2\big),
     \end{equation}
where $\wp_{1,3}$ is the Weierstrass elliptic function with periods
$\omega_{1},\omega_{3}$,
 $x^{-1}$ the inverse function
of the first coordinate of the uniformization
(\ref{uniformization}) and where
$\omega_{1},\omega_{2},\omega_{3}$ are defined in
(\ref{def_omega_1_2}) and (\ref{def_omega_3}).
   With (\ref{uniformization}), we note that
  \begin{equation}
  \label{zgh}
       x^{-1}(t)=\wp^{-1} (f(t)),
  \end{equation}
where $f$ is defined in (\ref{ffu}).

In Section~4 of \cite{KR}, we have studied some properties of
the function $w$ defined in (\ref{expression_CGFx}) and we have
shown that if $\omega_{3}>\omega_{2}/2$ (which is actually the case
here, see Proposition~\ref{omega_3=(3/4)omega_2}), then
$w$ is meromorphic on $\mathbb{C}\setminus [x_{3},x_{4}]$ and has
there a single pole, which is at $x_{2}$.

Let us now notice that
%We could write a formula similar to (\ref{expression_CGFx}).
     \begin{equation}
     \label{expression_CGFy}
          \widetilde{w}(t)=w(X_{0}(t))
     \end{equation}
is a suitable CGF for the set $\mathscr{G}Y([x_{1},x_{2}])$. Indeed,
as we verify below, the three items of Definition~\ref{def_CGF} are
satisfied.

Firstly, we immediately deduce from Lemma~\ref{me_fu} and from the inclusion
$\mathscr{G}Y([x_{1},x_{2}])\subset\mathbb{C}\setminus [y_{3},y_{4}]$ stated in
\ref{incc} of Lemma~\ref{Properties_curves_0} that
$\widetilde{w}$ defined in (\ref{expression_CGFy}) is meromorphic on
$\mathscr{G}Y([x_{1},x_{2}])$.

Secondly, using that $w$ is a CGF for $\mathscr{G}X([y_{1},y_{2}])$ as well as
Property~\ref{fi_fp} of Lemma~\ref{further_properties}, we reach the conclusion that
$\widetilde{w}$ establishes a conformal mapping of $\mathscr{G}Y([x_{1},x_{2}])$ onto
the complex plane cut along some arc.

Thirdly, once again with Property~\ref {fi_fp} of Lemma~\ref{further_properties}, we get
$X_{0}(Y_{0}(x))=X_{0}(Y_{1}(x))=x$ for $x\in[x_{1},x_{2}]$. As an immediate consequence,
for $t\in Y([x_{1},x_{2}])$ we have $\widetilde{w}\big(t\big)=\widetilde{w}\big(\overline{t}\big)$.

   \begin{lem}
   \label{further_properties}
   The two following properties hold.
   \begin{enumerate}
        \item \label{fi_fp} $X_{0}:\mathscr{G}Y([x_{1},x_{2}])\setminus [y_{1},y_{2}] \to
          \mathscr{G}X([y_{1},y_{2}])\setminus [x_{1},x_{2}]$ and $Y_{0}: \mathscr{G}X([y_{1},y_{2}])\setminus
          [x_{1},x_{2}]\to \mathscr{G}Y([x_{1},x_{2}])\setminus [y_{1},y_{2}]$ are conformal and inverse
          to one another.
        \item \label{se_fp}   $X_{0}(\mathbb{C})\subset \mathbb{C}\setminus [x_{3},x_{4}]$ and
          $Y_{0}(\mathbb{C})\subset \mathbb{C}\setminus [y_{3},y_{4}]$.
   \end{enumerate}
   \end{lem}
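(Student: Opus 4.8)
The plan is to transport everything to the torus $\mathbb{C}/\Omega$ via the uniformization~(\ref{uniformization}) and to read both assertions off the positions, determined in Proposition~\ref{transformation_cycles_uniformization} and Proposition~\ref{omega_3=(3/4)omega_2}, of the relevant cycles. Since $\omega_{1}$ is purely imaginary and $\omega_{2}$ is real, I work in the fundamental parallelogram with sides $\omega_{1}$ (vertical) and $\omega_{2}$ (horizontal), and I speak of the \emph{abscissa} of a point. By Proposition~\ref{transformation_cycles_uniformization} together with $\omega_{3}=3\omega_{2}/4$ (Proposition~\ref{omega_3=(3/4)omega_2}), the cycles $x^{-1}([x_{3},x_{4}])$, $x^{-1}([x_{1},x_{2}])$, $y^{-1}([y_{3},y_{4}])$, $y^{-1}([y_{1},y_{2}])$ are the vertical cycles of abscissa $0$, $\omega_{2}/2$, $3\omega_{2}/8$, $7\omega_{2}/8$. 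Using $x\circ\psi=x$, $y\circ\phi=y$ and (\ref{expression_automorphisms_torus}), the preimages of the two gluing curves are $x^{-1}(X([y_{1},y_{2}]))=y^{-1}([y_{1},y_{2}])\cup\psi(y^{-1}([y_{1},y_{2}]))$ and $y^{-1}(Y([x_{1},x_{2}]))=x^{-1}([x_{1},x_{2}])\cup\phi(x^{-1}([x_{1},x_{2}]))$, i.e.\ the pairs of cycles of abscissa $\{\omega_{2}/8,7\omega_{2}/8\}$ and of abscissa $\{\omega_{2}/4,\omega_{2}/2\}$ respectively.

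Next I would locate the two domains and a common uniformizing strip. The set $\mathscr{G}X([y_{1},y_{2}])$ is open, simply connected, avoids $\infty$ (which lies on the bounding curve by Lemma~\ref{Properties_curves_0}) and, among the four branch values $x_{1},\dots,x_{4}$ of $x$, contains exactly $x_{1},x_{2}$ (item~\ref{incc} of Lemma~\ref{Properties_curves_0}); a Riemann--Hurwitz count gives $\chi(x^{-1}(\mathscr{G}X([y_{1},y_{2}])))=0$, and since this preimage contains $x^{-1}([x_{1},x_{2}])$, the cycle of abscissa $\omega_{2}/2$, it must be the $\psi$-invariant strip of abscissa $\,]\omega_{2}/8,7\omega_{2}/8[\,$, on which $x$ is two-to-one. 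Likewise $y^{-1}(\mathscr{G}Y([x_{1},x_{2}]))$ is the $\phi$-invariant strip of abscissa $\,]\omega_{2}/2,\omega_{2}[\,\cup\,[0,\omega_{2}/4[\,$. I then set $R$ to be the strip of abscissa $\,]\omega_{2}/2,7\omega_{2}/8[\,$, which lies in both of these strips; it is a fundamental domain for $\psi$ acting on the first annulus (the two fixed points lying on its boundary cycle of abscissa $\omega_{2}/2$, which maps onto $[x_{1},x_{2}]$) and a fundamental domain for $\phi$ acting on the second annulus (the two fixed points lying on its boundary cycle of abscissa $7\omega_{2}/8$). One boundary component of $R$ thus lifts the slit $[x_{1},x_{2}]$ while the other lifts the curve $X([y_{1},y_{2}])$, and symmetrically for $[y_{1},y_{2}]$ and $Y([x_{1},x_{2}])$. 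Consequently $x|_{R}$ is a conformal bijection of $R$ onto $\mathscr{G}X([y_{1},y_{2}])\setminus[x_{1},x_{2}]$ and $y|_{R}$ a conformal bijection of $R$ onto $\mathscr{G}Y([x_{1},x_{2}])\setminus[y_{1},y_{2}]$.

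To conclude Property~\ref{fi_fp}, it remains to decide which branches are realized on the connected set $R$. There one has $x\equiv X_{0}\circ y$ or $x\equiv X_{1}\circ y$; the second is excluded because, by Lemma~\ref{Properties_X_Y_0}, $X_{1}$ has a pole at $-1$, which lies in $\mathscr{G}Y([x_{1},x_{2}])\setminus[y_{1},y_{2}]=y(R)$, whereas $x(R)=\mathscr{G}X([y_{1},y_{2}])\setminus[x_{1},x_{2}]$ omits $\infty$. Similarly $y\equiv Y_{0}\circ x$ on $R$: from $L(0,y,z)=z(y+1)$ and $L(x,-1,z)=x$ together with Lemma~\ref{Properties_X_Y_0} one reads $Y_{0}(0)=-1$, $Y_{1}(0)=\infty$, $X_{0}(-1)=0$, $X_{1}(-1)=\infty$; since $0\in\mathscr{G}X([y_{1},y_{2}])\setminus[x_{1},x_{2}]=x(R)$ while $\infty\notin y(R)$, the branch realized on $R$ cannot be $Y_{1}$. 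Thus $X_{0}=x|_{R}\circ(y|_{R})^{-1}$ and $Y_{0}=y|_{R}\circ(x|_{R})^{-1}$ on the domains stated in~\ref{fi_fp}, and these maps are conformal and mutually inverse. For Property~\ref{se_fp}, note that the locus on the torus where $x=X_{0}\circ y$ is, among the two strips cut out by $y^{-1}([y_{1},y_{2}])$ and $y^{-1}([y_{3},y_{4}])$, the one containing $R$, i.e.\ the strip of abscissa $\,]3\omega_{2}/8,7\omega_{2}/8[\,$; its image under $x$ is $X_{0}(\mathbb{C})$, and it is disjoint from $x^{-1}([x_{3},x_{4}])$, the cycle of abscissa $0$, so $X_{0}(\mathbb{C})\cap[x_{3},x_{4}]=\emptyset$. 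Symmetrically, $y=Y_{0}\circ x$ holds exactly on the strip of abscissa $\,]\omega_{2}/2,\omega_{2}[\,$, which is disjoint from $y^{-1}([y_{3},y_{4}])$, the cycle of abscissa $3\omega_{2}/8$, whence $Y_{0}(\mathbb{C})\cap[y_{3},y_{4}]=\emptyset$.

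The step I expect to be the main obstacle is the branch bookkeeping: at each stage one of two candidate components of the cut torus must be singled out, and in particular one must verify that the two a priori independent half-tori uniformizing $\mathscr{G}X([y_{1},y_{2}])\setminus[x_{1},x_{2}]$ through $x$ and $\mathscr{G}Y([x_{1},x_{2}])\setminus[y_{1},y_{2}]$ through $y$ can be taken to be one and the same strip $R$. This is precisely where the exact value $\omega_{3}=3\omega_{2}/4$ of Proposition~\ref{omega_3=(3/4)omega_2} is used, together with the test values $X_{0}(-1)=0$, $Y_{0}(0)=-1$ and the fact that the curve $X([y_{1},y_{2}])$ passes through $\infty$.
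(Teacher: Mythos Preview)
The paper does not prove this lemma at all: it simply refers to Part~5.3 of \cite{FIM}, where the result is established in a general framework (for $z=1/4$, extended verbatim to $z\in\,]0,1/4[$). Your argument is a genuine, self-contained alternative. Instead of working directly with the algebraic branches as in \cite{FIM}, you pull everything up to the torus $\mathbb{C}/\Omega$ and read off the statements from the positions of the cycles (Proposition~\ref{transformation_cycles_uniformization}) together with the Gessel-specific identity $\omega_{3}=3\omega_{2}/4$ (Proposition~\ref{omega_3=(3/4)omega_2}). This is more geometric and arguably cleaner for the case at hand; the price is that your proof is not model-independent, whereas the argument in \cite{FIM} does not use the value of $\omega_{3}/\omega_{2}$ and works for any walk whose kernel has genus one.

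Your reasoning is correct, including the Riemann--Hurwitz identification of the two annuli and the choice of the common strip $R$. The one place that deserves an extra line is the branch selection, where you assert without justification that $-1\in\mathscr{G}Y([x_{1},x_{2}])$ and $0\in\mathscr{G}X([y_{1},y_{2}])$. Both follow from the elementary computation
\[
\big|X_{0}^{\pm}(y)\big|^{2}=\frac{\widetilde{c}(y)}{\widetilde{a}(y)}=\frac{1}{y}>1\quad(y\in(0,y_{2}]),\qquad
\big|Y_{0}^{\pm}(x)\big|^{2}=\frac{c(x)}{a(x)}=\frac{1}{x^{2}}>1\quad(x\in[x_{1},x_{2}]),
\]
which shows that each curve lies entirely outside the closed unit disc; the closed unit disc is therefore contained in the component holding $[x_{1},x_{2}]$ (resp.\ $[y_{1},y_{2}]$), and in particular contains $0$ (resp.\ $-1$). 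With this added, your identification $x\equiv X_{0}\circ y$ and $y\equiv Y_{0}\circ x$ on $R$ goes through, and the deduction of~\ref{se_fp} from the location of the branch-identity strips relative to the cycles at abscissa $0$ and $3\omega_{2}/8$ is correct.
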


The proof of Lemma~\ref{further_properties} is done in Part 5.3 of
\cite{FIM} for $z=1/4$; it can be generalized directly up to
$z\in]0,1/4[$.

   \begin{lem}
   \label{me_fu}
   The function $\widetilde{w}$ defined in (\ref{expression_CGFy}) is meromorphic on
   $\mathbb{C}\setminus [y_{3},y_{4}]$ and has there a single pole, which is at $x_{3}$.
   \end{lem}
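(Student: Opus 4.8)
The plan is to deduce this from the analogous fact about $w$ recalled just above (from Section~4 of \cite{KR}): that $w$ is meromorphic on $\mathbb{C}\setminus[x_3,x_4]$ and has there a single pole, located at $x_2$. Since $\widetilde w=w\circ X_0$, the first and easy step is to handle the region $\mathbb{C}\setminus([y_1,y_2]\cup[y_3,y_4])$: there $X_0$ is holomorphic (Lemma~\ref{Properties_X_Y_0}(i) also rules out poles), and by Property~\ref{se_fp} of Lemma~\ref{further_properties} it takes its values in $\mathbb{C}\setminus[x_3,x_4]$, on which $w$ is meromorphic; hence $\widetilde w$ is meromorphic on $\mathbb{C}\setminus([y_1,y_2]\cup[y_3,y_4])$.

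The main obstacle will be to continue $\widetilde w$ analytically across the cut $[y_1,y_2]$, and here is how I would do it. For $t\in[y_1,y_2]$ the one-sided limits $X_0^{\pm}(t)$ exist, are complex conjugate to one another (because $\widetilde d(t)<0$ there) and lie on the curve $X([y_1,y_2])$, which bounds $\mathscr{G}X([y_1,y_2])$. Since $w$ is a CGF for this set, item~(iii) of Definition~\ref{def_CGF} yields $w(X_0^{+}(t))=w(\overline{X_0^{+}(t)})=w(X_0^{-}(t))$, and these common values are finite, the only pole $x_2$ of $w$ lying in the open set $\mathscr{G}X([y_1,y_2])$ (recall $[x_1,x_2]\subset\mathscr{G}X([y_1,y_2])$, see~\ref{incc} of Lemma~\ref{Properties_curves_0}), hence off its boundary curve. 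So $\widetilde w$ has equal, finite, continuous boundary values on the two edges of $[y_1,y_2]$; by Morera's theorem it continues holomorphically across $]y_1,y_2[$, and since it stays bounded near the endpoints $y_1,y_2$ (its limits there being $w(\infty)$ and $w(X(y_2))$), these are removable singularities. Consequently $\widetilde w$ is meromorphic on $\mathbb{C}\setminus[y_3,y_4]$ and, in fact, holomorphic near $[y_1,y_2]$.

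It then remains to locate the pole. By the previous step, every pole of $\widetilde w$ in $\mathbb{C}\setminus[y_3,y_4]$ lies in $\mathbb{C}\setminus([y_1,y_2]\cup[y_3,y_4])$, where $\widetilde w=w\circ X_0$ with $X_0$ valued in $\mathbb{C}\setminus[x_3,x_4]$; thus $\widetilde w$ has a pole at $t$ exactly when $X_0(t)=x_2$. Any such $t$ satisfies $L(x_2,t,z)=0$, so $t\in\{Y_0(x_2),Y_1(x_2)\}$, which reduces to the single value $-b(x_2)/[2a(x_2)]$ since $x_2$ is a branch point of $Y$; a short computation — writing $d(x)=(zx^2-(1+2z)x+z)(zx^2-(1-2z)x+z)$, so that $x_2,x_3$ are the two roots of the second factor, whence $x_2x_3=1$ and $b(x_2)=-2zx_2$ — identifies this value as $1/x_2=x_3$. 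Conversely $x_3$ does lie in $\mathbb{C}\setminus([y_1,y_2]\cup[y_3,y_4])$ (indeed $x_3>1>y_2$, and $x_3=Y_0(x_2)\in\mathbb{C}\setminus[y_3,y_4]$ by Property~\ref{se_fp} of Lemma~\ref{further_properties}) and satisfies $X_0(x_3)=x_2$: from $L(x_2,x_3,z)=0$ one gets $\{X_0(x_3),X_1(x_3)\}=\{x_2,1\}$, and $|X_0(x_3)|\leq|X_1(x_3)|$ (Lemma~\ref{Properties_X_Y_0}(ii)) together with $x_2<1$ singles out $X_0(x_3)=x_2$. Hence $\widetilde w$ has, in $\mathbb{C}\setminus[y_3,y_4]$, exactly one pole, at $x_3$, as claimed. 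The delicate ingredient is thus the crossing of $[y_1,y_2]$, which hinges entirely on the matching of one-sided limits provided by the gluing identity for $w$; the determination of the pole is then routine once the explicit forms of $a,b,c,d$ and of $x_2,x_3$ are used.
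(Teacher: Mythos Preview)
Your proof is correct and follows essentially the same route as the paper: meromorphy of $\widetilde w$ away from the cuts via $X_0(\mathbb{C})\subset\mathbb{C}\setminus[x_3,x_4]$, continuation across $[y_1,y_2]$ by the gluing identity $w(t)=w(\overline t)$ on $X([y_1,y_2])$, and location of the pole by solving $X_0(t)=x_2$. The only differences are of presentation: where the paper appeals to Parts~5.2.3--5.2.4 of \cite{FIM} for the crossing of $[y_1,y_2]$ and to a ``direct calculation'' for the pole, you spell out the Morera argument (including removability at the endpoints $y_1,y_2$) and the explicit algebra giving $Y_0(x_2)=1/x_2=x_3$ and $X_0(x_3)=x_2$.
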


   \begin{proof}
   $X_{0}$ being meromorphic on $\mathbb{C}\setminus ([y_{1},y_{2}]\cup [y_{3},y_{4}])$
   and $w$ on $\mathbb{C}\setminus [x_{3},x_{4}]$, the function $\widetilde{w}$ defined in
   (\ref{expression_CGFy}) is \text{a priori} meromorphic on $\mathbb{C}\setminus
   ([y_{1},y_{2}]\cup [y_{3},y_{4}]\cup X_{0}^{-1}([x_{3},x_{4}]))$.
   But on the one hand, with \ref{se_fp} of Lemma~\ref{further_properties}, $X_{0}^{-1}([x_{3},x_{4}])=\emptyset$.
   And on the other hand, thanks to the gluing property of the CGF $w$, $\widetilde{w}$ satisfies
   $\widetilde{w}^{+}(t)=\widetilde{w}^{-}(t)$ for $t\in[y_{1},y_{2}]$, \text{i.e.}\ the limits of $\widetilde{w}(u)$
   when $u\to t\in[y_{1},y_{2}]$ from the upper and lower sides of the cut are equal. $\widetilde{w}$ is thus also
   meromorphic in a neighborhood of $[y_{1},y_{2}]$, see e.g.\ Parts~5.2.3 and 5.2.4 of \cite{FIM}.
   Finally, $\widetilde{w}$ is meromorphic on $\mathbb{C}\setminus[y_{3},y_{4}]$.

   Moreover, with (\ref{expression_CGFy}) and since $w$ has on $\mathbb{C}\setminus [x_{3},x_{4}]$
   only one pole, which happens to be at $x_{2}$, the only poles of $\widetilde{w}$ are
   at the points $t$ where $X_{0}(t)=x_{2}$. It is then easy to verify, by a direct calculation,
   that $x_{3}$ is the only solution
   to the latter equation.
   \end{proof}

We prove now the following
proposition, which completes the proof of Theorem~\ref{explicit_integral}.
  \begin{prop}
  \label{cc}
  We have $w^{-1}(w([x_{1},x_{2}]))\setminus [x_{1},x_{2}]=\emptyset$ and
  $\widetilde{w}^{-1}(\widetilde{w}([y_{1},y_{2}]))\setminus [y_{1},y_{2}]=\emptyset$.
  \end{prop}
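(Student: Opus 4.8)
The plan is to decompose $\mathbb{C}\setminus[x_3,x_4]$ into the region $\mathscr{G}X([y_1,y_2])$, its boundary curve $X([y_1,y_2])$, and the remaining connected component of $\mathbb{C}\setminus X([y_1,y_2])$, to deal with each of the three pieces, and then to deduce the assertion on $\widetilde{w}$ from the one on $w$. On $\mathscr{G}X([y_1,y_2])$ there is nothing to do: since $w$ is a CGF for that set, item~(ii) of Definition~\ref{def_CGF} makes $w$ injective on it, and as $[x_1,x_2]\subset\mathscr{G}X([y_1,y_2])$ by Lemma~\ref{Properties_curves_0} this already gives $w^{-1}(w([x_1,x_2]))\cap\mathscr{G}X([y_1,y_2])=[x_1,x_2]$. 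On the curve $X([y_1,y_2])$ it is again soft: $w$ maps $\mathscr{G}X([y_1,y_2])$ conformally onto $\mathbb{C}$ slit along some arc, say $\mathscr{A}$, and—being meromorphic on the larger open set $\mathbb{C}\setminus[x_3,x_4]$, hence continuous up to that boundary—it maps $X([y_1,y_2])$ precisely onto $\mathscr{A}$; since $w([x_1,x_2])\subset w(\mathscr{G}X([y_1,y_2]))=\mathbb{C}\setminus\mathscr{A}$ is disjoint from $\mathscr{A}$, no point of $X([y_1,y_2])$ belongs to $w^{-1}(w([x_1,x_2]))$. Note that the only pole of $w$ sits at $x_2\in[x_1,x_2]$, which also disposes of the value $\infty\in w([x_1,x_2])$.

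The real work is to show that $w$ takes no value of $w([x_1,x_2])$ on $\mathscr{H}\setminus[x_3,x_4]$, where $\mathscr{H}$ denotes the component of $\mathbb{C}\setminus X([y_1,y_2])$ not containing $x_1$. Here I would transport the problem to the torus by means of the explicit representations~(\ref{expression_CGFx}) and~(\ref{zgh}): through $t\mapsto x^{-1}(t)$ the set $\mathbb{C}\setminus[x_3,x_4]$ is in bijection with the half-parallelogram on which $x$ is univalent (cut along the side that $x$ sends to $[x_3,x_4]$), on which $w$ reads simply as $\omega\mapsto\wp_{1,3}(\omega-[\omega_1+\omega_2]/2)$, while Proposition~\ref{transformation_cycles_uniformization} identifies the arc to which $[x_1,x_2]$ corresponds. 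Using the exact value $\omega_3=3\omega_2/4$ of Proposition~\ref{www} one then computes the value set $w([x_1,x_2])$ and checks that, inside that half-parallelogram, $\wp_{1,3}(\omega-[\omega_1+\omega_2]/2)$ attains each such value only on the arc corresponding to $[x_1,x_2]$. Alternatively, if one is willing to use Theorem~\ref{w_w_tilde_algebraic}, observe that equation~(\ref{eq_w_w_tilde_algebraic}) is affine in $F(t)$ with coefficient of $F(t)$ equal to $-(w-K)^2$; hence $w(t)=w(t')$ with $w(t')\neq K$ forces $F(t)=F(t')$, so that $w^{-1}(w([x_1,x_2]))\subset F^{-1}(F([x_1,x_2]))\cup w^{-1}(K)$ and the claim reduces to an elementary real-variable study of the degree-four rational function $F$ of~(\ref{def_F_F_tilde}) (equivalently of $t\mapsto t^2/[(t-x_2)(t-1)^2(t-x_3)]$) showing $F^{-1}(F([x_1,x_2]))\cap(\mathbb{C}\setminus[x_3,x_4])=[x_1,x_2]$, together with a separate look at the fibre $w^{-1}(K)$ (or a verification that $K\notin w([x_1,x_2])$).

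The identity for $\widetilde{w}$ is obtained in the same way, with $\widetilde{w}$, $\widetilde{F}$, $\mathscr{G}Y([x_1,x_2])$ and $[y_1,y_2]$ in place of $w$, $F$, $\mathscr{G}X([y_1,y_2])$ and $[x_1,x_2]$; one may also derive it directly from the first identity via the factorisation $\widetilde{w}=w\circ X_0$ of~(\ref{expression_CGFy}) and Lemma~\ref{further_properties}: since $X_0(\mathbb{C})\subset\mathbb{C}\setminus[x_3,x_4]$ and $X_0$ is conformal from $\mathscr{G}Y([x_1,x_2])\setminus[y_1,y_2]$ onto $\mathscr{G}X([y_1,y_2])\setminus[x_1,x_2]$, the fibres of $\widetilde{w}$ are pulled back from those of $w$.

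The part I expect to be genuinely delicate is the middle paragraph, namely pinning down exactly the value set $w([x_1,x_2])$ and proving that its fibre does not enlarge once one leaves $\mathscr{G}X([y_1,y_2])$: on the torus this is a careful bookkeeping of half-periods and of which sub-arcs of the locus where $\wp_{1,3}$ is real-valued actually fall inside the relevant half-parallelogram (this is exactly where $\omega_3=3\omega_2/4$ enters); in the algebraic formulation it is the study of the critical values of $F$ and their position relative to $x_1$, $x_2$, $x_3$. Everything else—the reduction via conformality on $\mathscr{G}X([y_1,y_2])$, the treatment of the boundary curve, and the passage from $w$ to $\widetilde{w}$—is routine.
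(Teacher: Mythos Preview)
Your torus approach (a) is exactly the paper's argument, and it handles the whole of $\mathbb{C}\setminus[x_3,x_4]$ at once---the preliminary decomposition into $\mathscr{G}X([y_1,y_2])$, the curve, and $\mathscr{H}$ is unnecessary. The paper simply fixes $u\in[x_1,x_2]$, writes $w(t)=w(u)$ as $\wp_{1,3}(\omega)=\wp_{1,3}(\Upsilon)$ with $\omega=x^{-1}(t)-[\omega_1+\omega_2]/2$ and $\Upsilon\in[-\omega_1/2,\omega_1/2]$, and observes that $x^{-1}(\mathbb{C}\cup\{\infty\})-[\omega_1+\omega_2]/2\subset]-\omega_3,0]\times[-\omega_1/(2\imath),\omega_1/(2\imath)]$; since the general solution $\pm\Upsilon+k_1\omega_1+k_3\omega_3$ has real part $k_3\omega_3$, only $k_3=0$ survives, forcing $\omega=\pm\Upsilon$ and hence (via evenness and periodicity of $\wp$) $f(t)=f(u)$, i.e.\ $t=u$. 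That is the ``careful bookkeeping of half-periods'' you anticipated, and it is short.

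Your alternative (b), however, does not work as written. The identity $F^{-1}(F([x_1,x_2]))\cap(\mathbb{C}\setminus[x_3,x_4])=[x_1,x_2]$ is false: $F$ takes on the unit circle exactly the same values as on $[x_1,x_2]$. One sees this either from the torus, since by~(\ref{ff}) and Propositions~\ref{transformation_cycles_uniformization} and~\ref{transformation_circles_uniformization} the segment $[x_1,x_2]$ and the circle $\{|t|=1\}$ lift (after the shift by $[\omega_1+\omega_2]/2$) to $[-\omega_1/2,\omega_1/2]$ and $[-\omega_1/2,\omega_1/2]-\omega_4$ respectively, and $\wp_{1,4}$ has period $\omega_4$; or directly, using $x_2x_3=1$ to check that $t\mapsto t^2/[(t-x_2)(t-1)^2(t-x_3)]$ is real on $\{|t|=1\}$ and blows up at $t=1$. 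So reducing to $F$ is too coarse: it throws away precisely the information that distinguishes $w$ from the other two branches of the cubic~(\ref{eq_w_w_tilde_algebraic}), and you would still have to show that on the unit circle $w$ is a different branch than on $[x_1,x_2]$---which brings you back to the torus argument anyway.
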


  \begin{proof}
   In order to prove the first identity,
 it is enough to  show that for any fixed $u\in[x_{1},x_{2}]$,
   the only solution in $t$ of $w(t)=w(u)$ is $t=u$.

  If $u\in[x_{1},x_{2}]$, then $w(u)\in\wp_{1,3}([-\omega_{1}/2,\omega_{1}/2])$, see
  Proposition~\ref{transformation_cycles_uniformization} and Equation~(\ref{expression_CGFx}).
  Thus once again with
  (\ref{expression_CGFx}), the
  equation $w(t)=w(u)$ can be interpreted as
  \begin{equation}
  \label{eqeq}
       \wp_{1,3}(\omega)=\wp_{1,3}(\Upsilon),
  \end{equation}
  where $\omega=x^{-1}(t)-[\omega_{1}+\omega_{2}]/2$ and
  $\Upsilon\in[-\omega_{1}/2,\omega_{1}/2]$.

  \text{A priori}, Equation~(\ref{eqeq})
  admits the solutions $\omega=\pm\Upsilon+k_{1}\omega_{1}+k_{3}\omega_{3}$, with
  $k_{1},k_{3}\in\mathbb{Z}$---see Corollary~3.10.8 in \cite{JS}, then remember that $\wp_{1,3}$ is even and periodic
  w.r.t.\ $\omega_{1},\omega_{3}$.
  But in our case, $\omega$ belongs to a restricted region, namely
  $]-\omega_{3},0]\times[-\omega_{1}/(2\imath),\omega_{1}/(2\imath)]$. Indeed, as already noted in this section,
  we have $x^{-1}(\mathbb{C}\cup\{\infty\})\subset [0,\omega_{2}/2]\times [0,\omega_{1}/\imath]$,
  so that thanks to Proposition~\ref{omega_3=(3/4)omega_2}, we have $x^{-1}(\mathbb{C}\cup\{\infty\})-[\omega_{1}+\omega_{2}]/2
  \subset]-\omega_{3},0]\times [-\omega_{1}/(2\imath),\omega_{1}/(2\imath)]$.
  In the latter restricted domain, the only solutions
  to~(\ref{eqeq}) are $\omega=\pm\Upsilon$.
  In particular, we get $x^{-1}(t)-[\omega_{1}+\omega_{2}]/2=\pm (x^{-1}(u)-[\omega_{1}+\omega_{2}]/2)$, which yields
  $x^{-1}(t)=(1\mp 1)[\omega_{1}+\omega_{2}]/2\pm x^{-1}(u)$.

  Then, taking the image of the previous equality trough $\wp$ and using Equation~(\ref{zgh}),
  we obtain $f(t)=\wp((1\mp 1)[\omega_{1}+\omega_{2}]/2\pm x^{-1}(u))$. Since $\wp$
  is periodic w.r.t.\ $\omega_{1},\omega_{2}$ and even, we conclude that  $f(t)=f(u)$
   and finally that $t=u$, since $f$ is one-to-one, see~(\ref{ffu}).

  %In Subsection~\ref{Implicit_FIM}, we saw that $x^{-1}(\mathbb{C}\cup\{\infty\})=
  %([0,\omega_{2}/2[\times [0,\omega_{1}/\imath[)\cup   ([0,\omega_{1}/2]+\omega_{2}/2)$. In
  %particular, with Proposition~\ref{omega_3=(3/4)omega_2}, we get  $x^{-1}(\mathbb{C}\cup\{\infty\})
  %\subset]-\omega_{3}+\omega_{2}/2,\omega_{2}/2]\times[0,\omega_{1}/\imath[$. But $\wp_{1,3}$
  %takes each value of $\mathbb{C}\cup\{\infty\}$ twice on the parallelogram $]-\omega_{3}+
  %\omega_{2}/2,\omega_{2}/2]\times[0,\omega_{1}/\imath[$ and with the proof of Lemma~\ref{lemma_g_2_g_3}, we have
  %$\wp_{1,3}([-\omega_{1}/2,0])=\wp_{1,3}([0,\omega_{1}/2])=w([x_{1},x_{2}])$, so that we get
  %$w^{-1}(w([x_{1},x_{2}]))\setminus [x_{1},x_{2}]=\emptyset$.

  The proof of the identity $w^{-1}(w([x_{1},x_{2}]))\setminus [x_{1},x_{2}]=\emptyset$
  is concluded.
  Similar reasoning and the use of Equation~(\ref{expression_CGFy}) yield that
  $\widetilde{w}^{-1}(\widetilde{w}([y_{1},y_{2}]))\setminus [y_{1},y_{2}]=\emptyset$.
\end{proof}

\subsection{Proof of Theorem~7}
%\label{Proofss}

%We conclude Section~\ref{Study_CGF} by proving Theorem~\ref{w_w_tilde_algebraic}.
%\begin{proof}[Proof of Theorem~\ref{w_w_tilde_algebraic}]

   Our aim is to show that the function defined in (\ref{expression_CGFx}) is
   the only function having a pole at $x_{2}$ and solution to
   (\ref{eq_w_w_tilde_algebraic}).

\medskip

Denote $\omega_{4}=\omega_{2}/4$ and let $\wp_{1,4}$ be the
Weierstrass elliptic function with periods $\omega_{1},\omega_{4}$.
We recall that $\wp$ and $\wp_{1,3}$ are the Weierstrass elliptic
functions with respective periods $\omega_{1},\omega_{2}$ and
$\omega_{1},\omega_{3}$, where $\omega_3=3\omega_{2}/4$ thanks to
Proposition~\ref{www}.

To begin with, let us prove the following lemma.
\begin{lem}
\label{sjjj}
Let $\breve{\wp}$ be the Weierstrass elliptic function
with periods noted $\hat{\omega},\check{\omega}$ and let $n$ be a
positive integer. Then the Weierstrass elliptic function  with
periods $\hat{\omega}, \check{\omega}/n$ can be written in terms of
$\breve{\wp}$ as follows:
     \begin{equation}
     \label{link_several_wp}
      \breve{\wp}(\omega)+\sum_{k=1}^{n-1}\big[\breve{\wp}(\omega+k\check{\omega}/n)
          -\breve{\wp}(k\check{\omega}/n)\big].
     \end{equation}
\end{lem}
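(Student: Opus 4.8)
The plan is to verify directly that the function $g(\omega)$ displayed in~(\ref{link_several_wp}) is elliptic with respect to the lattice $\hat{\omega}\mathbb{Z}+(\check{\omega}/n)\mathbb{Z}$ and has at the origin the same principal part and the same (vanishing) constant term as the Weierstrass function attached to that lattice; Liouville's theorem then forces the two to coincide. It is convenient to write $g(\omega)=\sum_{k=0}^{n-1}\breve{\wp}(\omega+k\check{\omega}/n)-C$ with $C=\sum_{k=1}^{n-1}\breve{\wp}(k\check{\omega}/n)$, and to denote by $\wp^{\ast}$ the Weierstrass elliptic function with periods $\hat{\omega}$ and $\check{\omega}/n$.

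First I would check the two periodicities. Invariance of $g$ under $\omega\mapsto\omega+\hat{\omega}$ is immediate since each summand $\breve{\wp}(\omega+k\check{\omega}/n)$ has period $\hat{\omega}$. Invariance under $\omega\mapsto\omega+\check{\omega}/n$ comes from the fact that this shift sends the term indexed by $k$ to the one indexed by $k+1$, with the index $n$ identified with $0$ because $\breve{\wp}(\omega+\check{\omega})=\breve{\wp}(\omega)$; the shift thus merely permutes the $n$ summands cyclically and leaves $g$ unchanged. Hence $g$ is elliptic for $\hat{\omega}\mathbb{Z}+(\check{\omega}/n)\mathbb{Z}$, and its set of poles is the union over $k\in\{0,\dots,n-1\}$ of $-k\check{\omega}/n+(\hat{\omega}\mathbb{Z}+\check{\omega}\mathbb{Z})$, which is precisely the lattice $\hat{\omega}\mathbb{Z}+(\check{\omega}/n)\mathbb{Z}$, each pole being double (the pole sets of distinct summands are disjoint, so there is no cancellation).

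Next I would expand $g$ near $\omega=0$. Only the term $k=0$ is singular there, contributing $\breve{\wp}(\omega)=\omega^{-2}+O(\omega^{2})$, while each term with $1\leq k\leq n-1$ contributes $\breve{\wp}(\omega+k\check{\omega}/n)-\breve{\wp}(k\check{\omega}/n)=\breve{\wp}'(k\check{\omega}/n)\,\omega+O(\omega^{2})$; hence $g(\omega)=\omega^{-2}+\big(\sum_{k=1}^{n-1}\breve{\wp}'(k\check{\omega}/n)\big)\omega+O(\omega^{2})$. The linear coefficient vanishes: pairing $k$ with $n-k$ and using that $\breve{\wp}'$ has period $\check{\omega}$ and is odd gives $\breve{\wp}'((n-k)\check{\omega}/n)=\breve{\wp}'(-k\check{\omega}/n)=-\breve{\wp}'(k\check{\omega}/n)$, so the terms cancel in pairs, the only unpaired term $k=n/2$ (present when $n$ is even) being $\breve{\wp}'(\check{\omega}/2)=0$ since $\check{\omega}/2$ is a half-period. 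Therefore $g(\omega)=\omega^{-2}+O(\omega^{2})$ near $0$, matching $\wp^{\ast}$.

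Finally, $g-\wp^{\ast}$ is elliptic for $\hat{\omega}\mathbb{Z}+(\check{\omega}/n)\mathbb{Z}$ and, having the same principal part $\omega^{-2}$ and vanishing constant term at $0$—hence at every point of that lattice—it extends holomorphically to all of $\mathbb{C}$, so it is constant, and letting $\omega\to 0$ shows the constant is $0$; thus $g=\wp^{\ast}$, which is the claim. I do not expect a real obstacle: the computation is entirely classical, the only point needing a short argument being the vanishing of $\sum_{k=1}^{n-1}\breve{\wp}'(k\check{\omega}/n)$, settled by the pairing just described.
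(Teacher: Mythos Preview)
Your proof is correct and follows essentially the same approach as the paper: verify that the expression in~(\ref{link_several_wp}) is elliptic with periods $\hat{\omega},\check{\omega}/n$, has a single double pole in the fundamental parallelogram at $0$, and has Laurent expansion $\omega^{-2}+O(\omega^{2})$ there, so that it must coincide with $\wp^{\ast}$. You supply more detail than the paper---in particular the cyclic-permutation argument for the $\check{\omega}/n$-periodicity and the pairing $k\leftrightarrow n-k$ to kill the linear coefficient---whereas the paper simply asserts these three properties are easy to verify; a slightly shorter alternative for the last point is to note directly that $g$ is even (reindexing $k\mapsto n-k$ after using that $\breve{\wp}$ is even and $\check{\omega}$-periodic).
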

\begin{proof}
It is easy to verify that both the Weierstrass elliptic function having for
periods $\hat{\omega}, \check{\omega}/n$ and the function defined by
(\ref{link_several_wp}) satisfy the three properties hereafter:
they are elliptic with periods $\hat{\omega},\check{\omega}/n$;
they have only one pole in the fundamental parallelogram
$\hat{\omega}[0,1[+(\check{\omega}/n)[0,1[$, this pole is
at $0$ and is of order two;
they admit an expansion at $\omega=0$ equal to
$1/\omega^{2}+O(\omega^{2})$.
Therefore, they must coincide, see e.g.\ Part 8.10 on pages $227$--$230$ in \cite{LAW}.
\end{proof}

Now we notice that by applying the following addition formula
(see (6.8.10) on page $162$ in \cite{LAW})
     \begin{equation}
     \label{addition_theorem}
          \wp(\omega+\widetilde{\omega})=-\wp(\omega)
          -\wp(\widetilde{\omega})+\frac{1}{4}\left[
          \frac{\wp'(\omega)-\wp'(\widetilde{\omega})}
          {\wp(\omega)-\wp(\widetilde{\omega})}\right]^{2},
          \ \ \ \ \ \forall \omega,\widetilde{\omega},
     \end{equation}
to the Weierstrass elliptic function $\breve{\wp}$ in
(\ref{link_several_wp}) and by then using the identity
(\ref{relation_wp_wp'}), we can express the Weierstrass elliptic
function with periods $\hat{\omega},\check{\omega}/n$ as a rational
function of the Weierstrass elliptic function $\breve{\wp}$ with
periods $\hat{\omega},\check{\omega}$.

We shall apply this procedure in the proof of Lemmas~\ref{ll} and~\ref{lll}.

\begin{lem}
\label{ll}
We have
 \begin{equation}
     \label{equality_1214}
          \wp_{1,4}(\omega)=-2\wp(\omega)+
          \frac{\wp'(\omega)^{2}+\wp'(\omega_{2}/4)^{2}}
          {2[\wp(\omega)-\wp(\omega_{2}/4)]^{2}}+
          \frac{\wp'(\omega)^{2}}
          {4[\wp(\omega)-\wp(\omega_{2}/2)]^{2}}
          -\wp(\omega_{2}/2)-2\wp(\omega_{2}/4), \forall \omega
     \end{equation}
where $\wp(\omega_{2}/2)= f(x_{1})$, $\wp(\omega_{2}/4)=(1+4z^{2})/3$,
$\wp'(\omega_{2}/4)=-8z^{2}$ and where $\wp'(\omega)$ can be expressed
in terms of $\wp(\omega)$ and $z$ with Equations (\ref{relation_wp_wp'})
and (\ref{g_2_g_3}).

Furthermore,
     \begin{equation}
     \label{ff}
          \wp_{1,4}\big(\wp^{-1}(f(t))-
          [\omega_{1}+\omega_{2}]/2\big)=F(t),
          \ \ \ \ \ \forall t\in\mathbb{C},
     \end{equation}
where $F$ is defined in (\ref{def_F_F_tilde}) and $f$ in (\ref{ffu}).
\end{lem}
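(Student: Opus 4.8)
The plan is to prove Lemma~\ref{ll} in two pieces, corresponding to its two displayed assertions. For the first identity \eqref{equality_1214}, I would apply Lemma~\ref{sjjj} with $\breve{\wp}=\wp$, $\hat{\omega}=\omega_{1}$, $\check{\omega}=\omega_{2}$ and $n=4$, so that $\wp_{1,4}$ is expressed as
\begin{equation*}
\wp_{1,4}(\omega)=\wp(\omega)+\sum_{k=1}^{3}\big[\wp(\omega+k\omega_{2}/4)-\wp(k\omega_{2}/4)\big].
\end{equation*}
Next I would collapse the $k=1$ and $k=3$ terms using that $\wp$ is even and $\omega_{2}$-periodic, which gives $\wp(k\omega_{2}/4)=\wp(\omega_{2}/4)$ for $k\in\{1,3\}$ and lets me pair $\wp(\omega+\omega_{2}/4)$ with $\wp(\omega-\omega_{2}/4)=\wp(\omega+3\omega_{2}/4)$; applying the addition formula \eqref{addition_theorem} to $\wp(\omega\pm\omega_{2}/4)$ and summing makes the cross terms cancel, producing the summand $-2\wp(\omega)-2\wp(\omega_{2}/4)+\tfrac{1}{2}[\wp'(\omega)^{2}+\wp'(\omega_{2}/4)^{2}]/[\wp(\omega)-\wp(\omega_{2}/4)]^{2}$ after invoking \eqref{relation_wp_wp'}. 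The $k=2$ term is $\wp(\omega+\omega_{2}/2)-\wp(\omega_{2}/2)$, and one application of \eqref{addition_theorem} together with $\wp'(\omega_{2}/2)=0$ (half-period) turns it into $-\wp(\omega)-\wp(\omega_{2}/2)+\tfrac{1}{4}\wp'(\omega)^{2}/[\wp(\omega)-\wp(\omega_{2}/2)]^{2}$. Adding the bare $\wp(\omega)$ and all pieces yields exactly \eqref{equality_1214}. The values $\wp(\omega_{2}/2)=f(x_{1})$ come from the identification of half-period values made just before Lemma~\ref{lemma_g_2_g_3}, and $\wp(\omega_{2}/4),\wp'(\omega_{2}/4)$ follow from a short direct computation: $\wp(\omega_{2}/4)$ is a root of the duplication-type polynomial relating $\wp(\omega_{2}/4)$ to $\wp(\omega_{2}/2)=f(x_{1})$ via the addition formula, and one checks $(1+4z^{2})/3$ is the correct root (the one lying in the appropriate real interval) using the explicit $g_{2},g_{3}$ from \eqref{g_2_g_3}; then $\wp'(\omega_{2}/4)^{2}=4\wp(\omega_{2}/4)^{3}-g_{2}\wp(\omega_{2}/4)-g_{3}$ evaluates to $64z^{4}$, and the sign is fixed by monotonicity of $\wp$ on the relevant segment.

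For the second identity \eqref{ff}, I would substitute $\omega=\wp^{-1}(f(t))-[\omega_{1}+\omega_{2}]/2$ into \eqref{equality_1214}. The point is that at this argument one has $\wp(\omega)=\wp(\wp^{-1}(f(t))-[\omega_{1}+\omega_{2}]/2)=\wp_{?}$—more usefully, I would instead track $\wp$ evaluated at the shifted argument directly: since $[\omega_{1}+\omega_{2}]/2$ is a half-period, the addition formula gives $\wp(\wp^{-1}(f(t))-[\omega_{1}+\omega_{2}]/2)$ as a rational function of $\wp(\wp^{-1}(f(t)))=f(t)$ and the half-period values $\wp([\omega_{1}+\omega_{2}]/2)=f(x_{2})$ (from Lemma~\ref{lemma_g_2_g_3}'s proof) and $\wp'([\omega_{1}+\omega_{2}]/2)=0$. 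Carrying this through, $\wp(\omega)$ becomes an explicit rational expression in $f(t)$, hence—using the definition \eqref{ffu} of $f$ and the explicit branch points $x_{1},x_{2},x_{3},x_{4}$—an explicit rational function of $t$. Plugging these rational expressions into the right-hand side of \eqref{equality_1214} and simplifying (clearing denominators, using $g_{2},g_{3}$ from \eqref{g_2_g_3} and the explicit $x_{k}$) should collapse to the closed form $F(t)$ in \eqref{def_F_F_tilde}. This is a finite algebraic verification.

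The main obstacle is the second step: the simplification from the rational-in-$\wp$ form to the clean closed form $F(t)=\tfrac{1-24z+16z^{2}}{3}-\tfrac{4(1-4z)^{2}}{z}\cdot\tfrac{t^{2}}{(t-x_{2})(t-1)^{2}(t-x_{3})}$. The appearance of the factors $(t-x_{2})$, $(t-1)^{2}$, $(t-x_{3})$ in the denominator is not manifest from \eqref{equality_1214}; one must see that the poles of $\wp'(\omega)^{2}/[\wp(\omega)-\wp(\omega_{2}/4)]^{2}$ and $\wp'(\omega)^{2}/[\wp(\omega)-\wp(\omega_{2}/2)]^{2}$, pulled back through $\omega=\wp^{-1}(f(t))-[\omega_{1}+\omega_{2}]/2$, land precisely at $t=x_{2}$, $t=1$ (double), $t=x_{3}$, while the polar parts at $t=x_{1},x_{4}$ cancel against those of $-2\wp(\omega)$ and against each other. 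I would organize this by a pole-and-residue argument: both sides of \eqref{ff} are rational functions of $t$ of controlled degree, so it suffices to match their poles with principal parts and one extra value (e.g.\ the constant term, obtained by sending $t\to\infty$, where $f(t)\to d''(x_{4})/6$ and the left side tends to $\wp_{1,4}$ evaluated at a specific lattice-related point, which one computes to be $(1-24z+16z^{2})/3$). Matching principal parts at $t=1$ (where $f$ has a double pole) is the delicate case and will require the duplication formula for $\wp_{1,4}$ or a careful local expansion; everything else is bookkeeping with the explicit data already assembled in Lemmas~\ref{lemma_g_2_g_3}, \ref{ll}'s first part, and Propositions~\ref{www} and~\ref{transformation_cycles_uniformization}.
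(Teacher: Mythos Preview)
Your approach is essentially the same as the paper's: apply Lemma~\ref{sjjj} with $n=4$, collapse the $k=1,3$ terms via evenness and periodicity of $\wp$, use the addition formula~\eqref{addition_theorem} and $\wp'(\omega_{2}/2)=0$ to obtain~\eqref{equality_1214}; compute $\wp(\omega_{2}/4)$ from a half-period relation (the paper uses the explicit formula~\eqref{anti_duplication}, which is exactly your ``duplication-type polynomial'') and fix the sign of $\wp'(\omega_{2}/4)$ by monotonicity; then substitute $\omega=\wp^{-1}(f(t))-[\omega_{1}+\omega_{2}]/2$ and use~\eqref{addition_theorem} at the half-period $[\omega_{1}+\omega_{2}]/2$ to reduce~\eqref{ff} to a finite algebraic identity. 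The paper simply records this last step as ``a substantial but elementary calculation,'' whereas you propose to organise it by matching poles and principal parts of two rational functions of $t$; that is a perfectly sound alternative bookkeeping, not a different proof. Two small corrections: in your final paragraph it is $F$, not $f$, that has the double pole at $t=1$ (the map $f$ of~\eqref{ffu} has its only pole at $x_{4}$); and Propositions~\ref{www} and~\ref{transformation_cycles_uniformization} are not needed anywhere in this lemma.
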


\begin{proof}
We have $\omega_4=\omega_2/4$ by definition of $\omega_4$. Then, with
(\ref{link_several_wp}), we can write
     \begin{equation*}
          \wp_{1,4}(\omega)=\wp(\omega)+\wp(\omega+\omega_{2}/2)
          +\wp(\omega+\omega_{2}/4)+\wp(\omega+3\omega_{2}/4)-
          \wp(\omega_{2}/2)-\wp(\omega_{2}/4)-\wp(3\omega_{2}/4).
     \end{equation*}
Using then addition formula (\ref{addition_theorem}) for $\wp$
as well as the three equalities $\wp(\omega_{2}/4)=\wp(3\omega_{2}/4)$,
$\wp'(\omega_{2}/4)=-\wp'(3\omega_{2}/4)$ and $\wp'(\omega_{2}/2)=0$---obtained from the facts that $\wp(\omega_{2}/2+\omega)$ is even
and $\wp'(\omega_{2}/2+\omega)$ is odd, see (6.8.12) on page 162 in \cite{LAW}---we get (\ref{equality_1214}).

Using the formula below (see e.g.\ Exercise $8$ on page $182$ in \cite{LAW})
     \begin{equation}
     \label{anti_duplication}
          \wp( \omega_{2}/4 )=\wp( \omega_{2}/2 )
          +\big[(\wp( \omega_{2}/2)-\wp(
          \omega_{1}/2))(\wp( \omega_{2}/2)-\wp(
          [\omega_{1}+\omega_{2}]/2))\big]^{1/2}
     \end{equation}
as well as $\wp(\omega_{1}/2) =f(x_{3})$,
$\wp([\omega_{1}+\omega_{2}]/2)=f(x_{2})$ and
$\wp(\omega_{2}/2)=f(x_{1})$, see the proof of Lemma~\ref{lemma_g_2_g_3},
we immediately find
 $\wp(\omega_{2}/4)=(1+4z^{2})/3$. With
(\ref{relation_wp_wp'}) and (\ref{g_2_g_3}), we derive
$\wp'(\omega_{2}/4)^{2}=64z^{4}$. Since $\wp$ is decreasing on
$]0,\omega_{2}/2[$, see e.g.\ Part 6.11 on pages 166--167 in \cite{LAW}, we have $\wp'(\omega_{2}/4)<0$
and therefore $\wp'(\omega_{2}/4)=-8z^{2}$.

Formula (\ref{equality_1214}) with the known values of
$\wp(\omega_2/2)$, $\wp(\omega_2/4)$, $\wp'(\omega_2/4)$ as well as
with~$\wp'(\omega)$ expressed in terms of $\wp(\omega)$ and $z$ thanks to
(\ref{relation_wp_wp'}) and (\ref{g_2_g_3}) gives a representation
of $\wp_{1,4}(\omega)$
%in terms of $z$
  as a rational function of
$\wp(\omega)$.

 Evaluating this representation at $\omega=\wp^{-1}(f(t))-[\omega_{1}+\omega_{2}]/2$,
 once again using (\ref{addition_theorem}) for the function $\wp$
 together with (\ref{relation_wp_wp'}) and (\ref{g_2_g_3}) for the derivatives as well as
 the explicit value of $\wp([\omega_{1}+\omega_{2}]/2)$ given above,
 we get (\ref{ff}), after a substantial but elementary calculation.
\end{proof}

%The proof of Theorem~\ref{w_w_tilde_algebraic} will follow from applying this fact twice:
%\begin{enumerate}
%\item \label{fi_4} first, since $\omega_{4}=\omega_{2}/4$, we will express $\wp_{1,4}$ as a rational function of $\wp$;
%\item \label{se_4} then, since $\omega_{4}=\omega_{3}/3$, we will express $\wp_{1,4}$ as a rational function of $\wp_{1,3}$.
%\end{enumerate}

%Before making explicit the rational transformations that appear with \ref{fi_4} and \ref{se_4},
%we explain how to conclude the proof of Theorem~\ref{w_w_tilde_algebraic}.

%An immediate
%consequence of \ref{fi_4} and \ref{se_4} is the possibility of writing $\wp_{1,3}$ as an
%algebraic function of $\wp$. In particular, it is clear from that and from the addition
%formula (\ref{addition_theorem}) for $\wp$ that the identity $w(t)=\wp_{1,3}\big(\wp^{-1}(f(t))-
%[\omega_{1}+\omega_{2}]/2\big)$, with $f(t)=d'(x_{4})/(t-x_{4})+d''(x_{4})/6$---which is the
%CGF under consideration, see (\ref{uniformization}) and (\ref{expression_CGFx})---defines an
%algebraic function of $t$.

\begin{lem}
\label{lll} We have
     \begin{equation}
     \label{dop}
           \wp_{1,4}(\omega)=-\wp_{1,3}(\omega)+\frac{\wp_{1,3}'(\omega)^2+
          \wp_{1,3}'(\omega_3/3)^2 }{ 2[ \wp_{1,3}(\omega)-\wp_{1,3}(\omega_3/3)]^2}
           -4\wp_{1,3}(\omega_3/3),
           \ \ \ \ \ \forall \omega.
     \end{equation}
\end{lem}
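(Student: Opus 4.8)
The plan is to reduce Lemma~\ref{lll} to a single application of Lemma~\ref{sjjj} with $n=3$, followed by the addition formula (\ref{addition_theorem}). The key observation is that, by Proposition~\ref{www}, $\omega_{3}=3\omega_{2}/4$, so $\omega_{4}=\omega_{2}/4=\omega_{3}/3$. Hence $\wp_{1,4}$ is exactly the Weierstrass elliptic function with periods $\omega_{1},\omega_{3}/3$, which is the setting of Lemma~\ref{sjjj} applied to $\breve{\wp}=\wp_{1,3}$ with $(\hat{\omega},\check{\omega})=(\omega_{1},\omega_{3})$ and $n=3$. Substituting into (\ref{link_several_wp}) gives
\[
\wp_{1,4}(\omega)=\wp_{1,3}(\omega)+\big[\wp_{1,3}(\omega+\omega_{3}/3)-\wp_{1,3}(\omega_{3}/3)\big]+\big[\wp_{1,3}(\omega+2\omega_{3}/3)-\wp_{1,3}(2\omega_{3}/3)\big].
\]

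Next I would use $\omega_{3}$-periodicity and evenness of $\wp_{1,3}$ to replace $\wp_{1,3}(\omega+2\omega_{3}/3)$ by $\wp_{1,3}(\omega-\omega_{3}/3)$ and $\wp_{1,3}(2\omega_{3}/3)$ by $\wp_{1,3}(\omega_{3}/3)$, turning the displayed identity into $\wp_{1,3}(\omega)+\wp_{1,3}(\omega+\omega_{3}/3)+\wp_{1,3}(\omega-\omega_{3}/3)-2\wp_{1,3}(\omega_{3}/3)$. Then apply (\ref{addition_theorem}) to the two terms $\wp_{1,3}(\omega\pm\omega_{3}/3)$, taking $\widetilde{\omega}=\pm\omega_{3}/3$ and using that $\wp_{1,3}$ is even and $\wp_{1,3}'$ is odd, so that $\wp_{1,3}(-\omega_{3}/3)=\wp_{1,3}(\omega_{3}/3)$ and $\wp_{1,3}'(-\omega_{3}/3)=-\wp_{1,3}'(\omega_{3}/3)$. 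Adding the two resulting expressions, the cross term in the square bracket disappears via $(a-b)^{2}+(a+b)^{2}=2(a^{2}+b^{2})$, yielding
\[
\wp_{1,3}(\omega+\omega_{3}/3)+\wp_{1,3}(\omega-\omega_{3}/3)=-2\wp_{1,3}(\omega)-2\wp_{1,3}(\omega_{3}/3)+\frac{\wp_{1,3}'(\omega)^{2}+\wp_{1,3}'(\omega_{3}/3)^{2}}{2[\wp_{1,3}(\omega)-\wp_{1,3}(\omega_{3}/3)]^{2}}.
\]

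Finally, combining this with the leading $\wp_{1,3}(\omega)$ term, the terms linear in $\wp_{1,3}(\omega)$ collapse to $-\wp_{1,3}(\omega)$ and the constants add up to $-4\wp_{1,3}(\omega_{3}/3)$, which is precisely (\ref{dop}). There is no analytic difficulty in this argument; the only point requiring attention is the very first step, namely invoking Proposition~\ref{www} to identify $\omega_{4}$ with $\omega_{3}/3$ so that Lemma~\ref{sjjj} applies with $n=3$. After that, the computation is the same bookkeeping with the addition formula as announced just before Lemma~\ref{ll}, only simpler here because exactly two shift terms appear and they are symmetric about $0$.
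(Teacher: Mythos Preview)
Your proof is correct and follows exactly the same approach as the paper: the paper's proof simply states that formulas~(\ref{link_several_wp}) and~(\ref{addition_theorem}), combined with $\omega_{4}=\omega_{2}/4=\omega_{3}/3$ from Proposition~\ref{www}, easily lead to~(\ref{dop}). You have merely written out the details of this computation, which are all correct.
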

\begin{proof}
Formulas (\ref{link_several_wp}) and (\ref{addition_theorem})
combined with the fact that $\omega_{4}=\omega_{2}/4=\omega_{3}/3$,
see Proposition~\ref{www}, easily lead to (\ref{dop}).
\end{proof}

Equality (\ref{relation_wp_wp'}) for $\wp_{1,3}$, written as
\begin{equation}
\label{sss} \wp_{1,3}'(\omega)^{2}=4\wp_{1,3}(\omega)^{3}-
g_{2,1,3}\wp_{1,3}(\omega)-g_{3,1,3},
\end{equation}
  allows us to
express $\wp_{1,3}'(\omega)^{2}$ in terms of $\wp_{1,3}(\omega)$ and
invariants $g_{2,1,3},g_{3,1,3}$ associated with $\wp_{1,3}$.
The next lemma gives their expression in terms of $z$.
\begin{lem}
\label{g213g313}
 Invariants $g_{2,1,3},g_{3,1,3}$ of $\wp_{1,3}$
have the following explicit expressions:
     \begin{equation*}
          g_{2,1,3}=40\wp_{1,3}(\omega_3/3)^2/3-G_{2},
          \ \ \ \ \ g_{3,1,3}=-280 \wp_{1,3}(\omega_3/3)^3/27+14
          \wp_{1,3}(\omega_3/3)G_{2}/9+G_{3},
     \end{equation*}
where $G_{2},G_{3}$ are defined in (\ref{GGG}).
\end{lem}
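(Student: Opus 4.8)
The plan is to obtain the invariants $g_{2,1,4},g_{3,1,4}$ of $\wp_{1,4}$ in two independent ways — one producing polynomials in $z$, the other producing polynomials in $g_{2,1,3},g_{3,1,3}$ and $p:=\wp_{1,3}(\omega_3/3)$ — and then to solve the resulting relations for $g_{2,1,3}$ and $g_{3,1,3}$.

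First I would use Lemma~\ref{lll}. Substituting the identity (\ref{sss}) for $\wp_{1,3}'(\omega)^2$ and for $\wp_{1,3}'(\omega_3/3)^2$ into (\ref{dop}) rewrites $\wp_{1,4}(\omega)$ as an explicit rational function of $u:=\wp_{1,3}(\omega)$ with coefficients in $\mathbb{Q}[g_{2,1,3},g_{3,1,3},p]$. I would then expand this identity in powers of $\varepsilon:=\omega-\omega_3/3$ around the double pole $\omega=\omega_3/3$ of $\wp_{1,4}$ (recall $\omega_3/3=\omega_4$ is a period of $\wp_{1,4}$): on the left the Laurent expansion is the standard one $\wp_{1,4}(\omega)=\varepsilon^{-2}+(g_{2,1,4}/20)\varepsilon^2+(g_{3,1,4}/28)\varepsilon^4+O(\varepsilon^6)$, while on the right one uses the Taylor expansion of $\wp_{1,3}$ at $\omega_3/3$, whose coefficients are polynomials in $p,g_{2,1,3},g_{3,1,3}$ obtained by repeatedly differentiating (\ref{sss}). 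Matching the coefficients of $\varepsilon^{-1}$ and $\varepsilon^{0}$ turns out to be automatic once one invokes that $p$ is a root of the $3$-division polynomial $p^4-g_{2,1,3}p^2/2-g_{3,1,3}p-g_{2,1,3}^2/48=0$ — which itself holds because $\omega_3/3$ is a non-trivial $3$-torsion point of $\mathbb{C}/(\omega_1\mathbb{Z}+\omega_3\mathbb{Z})$, as $3\cdot(\omega_3/3)=\omega_3\equiv 0$. Matching the coefficients of $\varepsilon^{2}$ and $\varepsilon^{4}$ (reducing the resulting expressions modulo that $3$-division relation) should then yield $g_{2,1,4}=40p^2/3-g_{2,1,3}$ and $g_{3,1,4}=-280p^3/27+14\,p\,g_{2,1,3}/9+g_{3,1,3}$.

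Second I would compute $g_{2,1,4}$ and $g_{3,1,4}$ directly in terms of $z$ from Lemma~\ref{ll}: the representation (\ref{equality_1214}) writes $\wp_{1,4}$ as an explicit rational function of $\wp$, whose invariants $g_2,g_3$ are given in terms of $z$ by Lemma~\ref{lemma_g_2_g_3}, and where $\wp(\omega_2/4)=(1+4z^2)/3$, $\wp'(\omega_2/4)=-8z^2$ and $\wp(\omega_2/2)=f(x_1)$ are known. Expanding (\ref{equality_1214}) in powers of $\omega$ about the common pole $\omega=0$ of $\wp$ and $\wp_{1,4}$, and using $\wp(\omega)=\omega^{-2}+(g_2/20)\omega^2+(g_3/28)\omega^4+O(\omega^6)$, gives $g_{2,1,4}$ and $g_{3,1,4}$ as rational functions of $z$; after simplification — in which the unpleasant constant $f(x_1)$ must drop out — these become exactly $G_2(z)$ and $G_3(z)$ of (\ref{GGG}).

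Finally, equating the two sets of expressions gives the system $G_2=40p^2/3-g_{2,1,3}$ and $G_3=-280p^3/27+14\,p\,g_{2,1,3}/9+g_{3,1,3}$, which is linear in the unknowns $g_{2,1,3},g_{3,1,3}$; solving it — solve the first for $g_{2,1,3}$ and substitute into the second — produces precisely the claimed formulas. I expect the main obstacle to be purely computational: pushing the two Laurent expansions far enough (to order $\varepsilon^4$, resp. $\omega^4$) while keeping track of the many auxiliary constants, and in the second expansion checking that $f(x_1)$ indeed cancels so that the clean $G_2(z),G_3(z)$ emerge. A secondary subtlety is that the formulas for $g_{2,1,4},g_{3,1,4}$ coming from the first expansion are only meaningful modulo the $3$-division relation for $p$, so that relation must be used consistently throughout the reduction.
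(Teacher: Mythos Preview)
Your overall strategy --- expand $\wp_{1,4}$ near a pole using both representations (\ref{equality_1214}) and (\ref{dop}) and match Laurent coefficients --- is exactly the paper's. The paper, however, expands \emph{both} identities at the common pole $\omega=0$, where $\wp$, $\wp_{1,3}$ and $\wp_{1,4}$ all have their standard Laurent series $\omega^{-2}+(g/20)\omega^{2}+(h/28)\omega^{4}+\cdots$; substituting these directly gives (\ref{first_expansion}) and (\ref{second_expansion}) with no Taylor expansion at a regular point and no appeal to the $3$-division relation. Your detour through $\omega_{3}/3$ for the first expansion would work in principle but is considerably heavier, and the $3$-division relation you invoke is simply not needed for this lemma.

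There is also a numerical slip in your intermediate claims. Comparing (\ref{first_expansion}) with the standard Laurent series shows $g_{2,1,4}=9G_{2}$ and $g_{3,1,4}=-27G_{3}$, not $G_{2}$ and $G_{3}$; correspondingly, the expansion of (\ref{dop}) at a pole gives $g_{2,1,4}=120p^{2}-9g_{2,1,3}$ and $g_{3,1,4}=-27g_{3,1,3}+280p^{3}-42pg_{2,1,3}$, not the formulas you wrote. Your two sets of errors are off by the \emph{same} factors ($9$ and $-27$), so they cancel when you equate and your final system still solves to the correct $g_{2,1,3},g_{3,1,3}$ --- but had you actually carried out either expansion you would not have obtained the intermediate values you state.
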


\begin{proof}
The proof consists in expanding $\wp_{1,4}(\omega)$ at $\omega = 0$ in two different ways.

Firstly, we use equality (\ref{equality_1214}) with $\wp'(\omega)$
expressed in terms of $\wp(\omega)$ thanks to (\ref{relation_wp_wp'}), with $g_{2},g_{3}$
obtained in (\ref{g_2_g_3}) as well as with  $\wp(\omega_2/2)$, $\wp(\omega_2/4)$ and $\wp'(\omega_2/4)$
found in Lemma~\ref{ll}. Expanding this identity in a neighborhood of $\omega=0$, we obtain:
     \begin{equation}
     \label{first_expansion}
          \wp_{1,4}(\omega)=\frac{1}{\omega^{2}}+\big[9G_{2}/20\big]\omega^{2}-
          \big[27G_{3}/28\big]\omega^{4}+O(\omega^{6}).
     \end{equation}

Secondly, we expand $\wp_{1,4}(\omega)$ at  $\omega = 0$ using
Equation (\ref{dop}) with  $\wp'_{1,3}(\omega)$ and $\wp'_{1,3}(\omega_{3}/3)$
expressed thanks to (\ref{sss}). After some calculation, we get:
     \begin{align}
          \wp_{1,4}(\omega)=\frac{1}{\omega^{2}}
          &+\big[6
          \wp_{1,3}(\omega_3/3)^{2}-9g_{2,1,3}/20\big]\omega^{2} \label{second_expansion}\\
          &+
          \big[10\wp_{1,3}(\omega_3/3)^3-3
          \wp_{1,3}(\omega_3/3)g_{2,1,3}/2-27g_{3,1,3}/28\big]\omega^{4}+O(\omega^{6}).\nonumber
     \end{align}

Lemma~\ref{g213g313}  follows then as we identify the
expansions (\ref{first_expansion}) and (\ref{second_expansion}).
\end{proof}

In the next lemma, we compute $\wp_{1,3}(\omega_3/3)$.
\begin{lem}
\label{K_algebraic} We have
 $\wp_{1,3}(\omega_{3}/3)=K$,
 where $K$ is  found as the only real positive solution to Equation~(\ref{KKK}).
\end{lem}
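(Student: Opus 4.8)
**

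The plan is to use the elliptic function identities already assembled in Lemmas~\ref{ll}, \ref{lll} and~\ref{g213g313} to obtain a polynomial equation satisfied by $\wp_{1,3}(\omega_3/3)$, and then to identify the correct root. The starting observation is that $\omega_3/3 = \omega_4 = \omega_2/4$, so $\wp(\omega_2/4)$, whose value $(1+4z^2)/3$ was computed in Lemma~\ref{ll}, is the image of $\omega_3/3$ under $\wp$ rather than under $\wp_{1,3}$; the two elliptic functions must be reconciled.

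First I would evaluate the identity~(\ref{dop}) of Lemma~\ref{lll} at a well-chosen point, or equivalently compare the two expansions already at hand. In fact, the cleanest route is: expansion~(\ref{second_expansion}) expresses $\wp_{1,4}(\omega)$ near $\omega=0$ in terms of $\wp_{1,3}(\omega_3/3)$ and the invariants $g_{2,1,3},g_{3,1,3}$; Lemma~\ref{g213g313} has just eliminated those invariants in favour of $\wp_{1,3}(\omega_3/3)$ and $G_2,G_3$. Substituting gives expansion~(\ref{first_expansion}) back, which is an identity and yields nothing new. So instead I would evaluate~(\ref{dop}) at the specific value $\omega = \omega_2/4 = \omega_3/3$, using that $\wp_{1,4}(\omega_3/3)$ is a value of the $\omega_1,\omega_4$-Weierstrass function at a quarter of its period $\omega_4$... — this is circular too. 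The genuinely new input must come from matching the \emph{value} $\wp(\omega_2/4)=(1+4z^2)/3$ with $\wp_{1,3}$. Concretely: apply Lemma~\ref{sjjj}/formula~(\ref{link_several_wp}) with $\breve\wp=\wp_{1,3}$, $\check\omega=\omega_3$, $n=3$, which writes $\wp = \wp_{1,2}$ (periods $\omega_1,\omega_2$, since $\omega_2 = \omega_3\cdot(4/3)$... careful: $\omega_2=\omega_3/( 3/4)$, and $\omega_3/3=\omega_2/4$, so the period $\omega_1,\omega_3/3=\omega_1,\omega_2/4$ elliptic function is $\wp_{1,4}$, while $\wp=\wp_{1,2}$ has period $\omega_2 = (4/3)\omega_3$). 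So the relation I want is between $\wp_{1,2}$ and $\wp_{1,3}$, obtained by the same $n$-division trick applied with the sublattice relation $\omega_2\mathbb Z\subset \omega_3\mathbb Z$ replaced by an appropriate index-two or index-$\tfrac43$ statement — which is not an integer index, so one passes through $\wp_{1,4}$: $\wp_{1,2}$ is the "$n=$"-contraction corresponding to going from $\omega_3$ to $\omega_2$, but since $\omega_2/\omega_3 = 4/3$ this is not a division. The correct move is: build $\wp_{1,4}$ from $\wp_{1,3}$ by $n=3$ division (Lemma~\ref{lll} does exactly this), and build $\wp_{1,4}$ from $\wp_{1,2}=\wp$ by $n=4$ division (Lemma~\ref{ll} does exactly this); then equate, and evaluate at a point where $\wp$'s value is known, e.g.\ $\omega = \omega_1/2$ or $\omega=[\omega_1+\omega_2]/2$, using $\wp([\omega_1+\omega_2]/2)=f(x_2)$, $\wp(\omega_1/2)=f(x_3)$, $\wp(\omega_2/2)=f(x_1)$ from the proof of Lemma~\ref{lemma_g_2_g_3}.

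So the concrete steps are: (i) from Lemma~\ref{ll}, with the explicit $\wp(\omega_2/2)=f(x_1)$, $\wp(\omega_2/4)=(1+4z^2)/3$, $\wp'(\omega_2/4)=-8z^2$ and $g_2,g_3$ from~(\ref{g_2_g_3}), get the numerical value of $\wp_{1,4}$ at a suitable argument — or more simply, read off from~(\ref{first_expansion}) that the invariants of $\wp_{1,4}$ are $g_{2,1,4}=18G_2/5$... (since the $\omega^2$-coefficient of a Weierstrass expansion is $g_2/20$, here $9G_2/20$, so $g_{2,1,4}=9G_2$; and the $\omega^4$-coefficient is $g_3/28$, here $-27G_3/28$, so $g_{3,1,4}=-27G_3$); (ii) independently, Lemma~\ref{g213g313} gives $g_{2,1,3},g_{3,1,3}$ in terms of $\kappa:=\wp_{1,3}(\omega_3/3)$ and $G_2,G_3$; (iii) now use that $\omega_3/3$ is a \emph{three-division point} of the lattice $\omega_1\mathbb Z+\omega_3\mathbb Z$ along the $\omega_3$-direction, hence $\wp_{1,3}'(\omega_3/3)$ can be computed two ways: directly from~(\ref{sss}) as $4\kappa^3-g_{2,1,3}\kappa-g_{3,1,3}$, and also from the fact that $3\cdot(\omega_3/3)=\omega_3\equiv 0$ makes $\omega_3/3$ a nontrivial $3$-torsion point, so $\kappa$ is a root of the $3$-division polynomial $\psi_3(X)=3X^4 - \tfrac{g_{2,1,3}}{... }$ — precisely $3X^4-\tfrac{3}{2}g_{2,1,3}... $; the standard form is $\psi_3(X)=3X^4-\tfrac{3}{2}g_2 X^2-3g_3X-\tfrac{1}{16}g_2^2$ wait — the classical $3$-division polynomial for $\wp$ is $3\wp^4-\tfrac{3}{2}g_2\wp^2-3g_3\wp-\tfrac{1}{16}g_2^2=0$ at the $3$-torsion points. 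Substituting the expressions for $g_{2,1,3},g_{3,1,3}$ from Lemma~\ref{g213g313} into this $3$-division equation for $\kappa$ turns it, after simplification, into exactly~(\ref{KKK}): $K^4 - G_2 K^2/2 - G_3 K - G_2^2/48 = 0$ with $K=\kappa$. (iv) Finally, to pin down which of the four roots of~(\ref{KKK}) equals $\wp_{1,3}(\omega_3/3)$: argue that $\omega_3/3\in\,]0,\omega_3[\,$ lies on the real segment where $\wp_{1,3}$ (with real periods, since $\omega_3\in\,]0,\omega_2[\,\subset\mathbb R$ and $\omega_1\in i\mathbb R$) is real and decreasing from $+\infty$ at $0$, hence $\wp_{1,3}(\omega_3/3)$ is real; moreover it exceeds $\wp_{1,3}(\omega_3/2)$, and a sign/size estimate (e.g.\ comparing with $\wp_{1,3}(\omega_1/2)$, which corresponds to a branch point, or simply checking positivity via the explicit $r_k(z)$ formula announced before the statement) shows it is the unique positive root, whence $\wp_{1,3}(\omega_3/3)=K$. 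The explicit radical formula $K=[-r_0^{1/2}+r_1^{1/2}+r_2^{1/2}]/2$ is then just Cardano/Ferrari applied to the quartic~(\ref{KKK}) and need not be reproved here.

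The main obstacle I anticipate is step (iii): correctly invoking the $3$-division polynomial for $\wp_{1,3}$ at the torsion point $\omega_3/3$ and carrying out the substitution of Lemma~\ref{g213g313}'s formulas so that the resulting quartic collapses exactly to~(\ref{KKK}); the algebra is elementary but genuinely substantial, and one must be careful that it is the $\omega_3$-quarter-period combination $\omega_3/3$ (a $3$-torsion point of the $\omega_1,\omega_3$ lattice), not a $\omega_1$-related point, that is being used — this is where Proposition~\ref{www} ($\omega_3=3\omega_2/4$, equivalently $\omega_3/3=\omega_2/4$) is indispensable, since it is what identifies $\omega_3/3$ as simultaneously a period-quarter for $\wp$ and a period-third for $\wp_{1,3}$. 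A secondary, lighter obstacle is the root-selection in step (iv); this should follow from monotonicity of $\wp_{1,3}$ on the real period interval together with the already-noted location of $\omega_3/3$ inside $]0,\omega_3[$, plus the harmless check that the "obvious" candidate root is positive.
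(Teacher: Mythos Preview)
Your eventual route in step~(iii)---recognise $\omega_3/3$ as a $3$-torsion point of the $\omega_1,\omega_3$-lattice, invoke the classical $3$-division quartic $K^4-g_{2,1,3}K^2/2-g_{3,1,3}K-g_{2,1,3}^2/48=0$ for $\wp_{1,3}$, and substitute the expressions for $g_{2,1,3},g_{3,1,3}$ from Lemma~\ref{g213g313}---is exactly the paper's argument (the paper cites the $3$-division quartic as Exercise~7 in \cite{LAW}). The substitution does collapse to~(\ref{KKK}) as you predict; the long exploratory detour at the start of your proposal is unnecessary but harmless.

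The one genuine gap is step~(iv). You correctly argue $\wp_{1,3}(\omega_3/3)>0$ from monotonicity of $\wp_{1,3}$ on $]0,\omega_3[$, but the lemma also asserts that~(\ref{KKK}) has a \emph{unique} positive real root, and your ``sign/size estimate'' is not an argument. The paper handles this by a separate Lemma~\ref{unique_G2_G3}: one checks $G_2\neq 0$ and computes $G_2^3-27G_3^2=(4^{14}/3^6)z^2(z-1/4)^4(z+1/4)^4>0$; under these hypotheses there is a Weierstrass function $\breve\wp$ with invariants $G_2,G_3$ and a rectangular period lattice $\hat\omega\mathbb{Z}+\check\omega\mathbb{Z}$ ($\hat\omega>0$, $\check\omega\in\imath\mathbb{R}_{>0}$), and the four roots of~(\ref{KKK}) are $\breve\wp(\hat\omega/3)>0$, $\breve\wp(\check\omega/3)<0$, and the non-real conjugate pair $\breve\wp([\hat\omega\pm\check\omega]/3)$. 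This, not a numerical check of the radical formula, is what pins down uniqueness; you should supply it or something equivalent.
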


\begin{proof}
It is stated in Exercise $7$ on page $182$ in \cite{LAW} that
the quantity $\wp_{1,3}(\omega_{3}/3)$ is the only real
positive solution to
$K^{4}-g_{2,1,3}K^{2}/2-g_{3,1,3}K-g_{2,1,3}^{2}/48=0$. Replacing
$g_{2,1,3}$ and $g_{3,1,3}$ with their expression obtained in
Lemma~\ref{g213g313}, we conclude that $\wp_{1,3}(\omega_{3}/3)$ is a root of
     \begin{equation}
     \label{eq_under_consideration}
          K^{4}-G_{2}K^{2}/2-G_{3}K-G_{2}^{2}/48.
     \end{equation}

Let us now show that for any $z\in]0,1/4[$, the polynomial
(\ref{eq_under_consideration}) has a unique real positive root.
Since $\wp_{1,3}(\omega_{3}/3)>0$---indeed, $\wp_{1,3}$ is positive
on $[0,\omega_{3}]$, see Part 6.11 on pages 166--167 in
\cite{LAW}---$\wp_{1,3}(\omega_{3}/3)$ shall be
characterized as the only real positive solution to~(\ref{KKK}).

According to Lemma~\ref{unique_G2_G3}, it is now enough to verify
that $G_{2}\neq 0$ and that $G_{2}^{3}-27G_{3}^{2}>0$. The first
fact is actually an immediate consequence of (\ref{GGG}), while the
second comes from the identity
$G_{2}^{3}-27G_{3}^{2}=(4^{14}/3^{6})z^{2}(z-1/4)^{4}(z+1/4)^{4}$,
see also (\ref{GGG}).
%According to Lemma~\ref{unique_G2_G3} below, it is enough to verify that
%$G_{2}^{3}-27G_{3}^{2}>0$; this fact comes the identity
%$G_{2}^{3}-27G_{3}^{2}=(4^{14}/3^{6})z^{2}(z-1/4)^{4}(z+1/4)^{4}$,
%obtained from~(\ref{GGG}).
\end{proof}

\begin{lem}
\label{unique_G2_G3}
For any real numbers $G_{2},G_{3}$ such that $G_{2}\neq 0$ and
$G_{2}^{3}-27G_{3}^{2}>0$, the polynomial (\ref{eq_under_consideration})
has a real negative root, a real positive root and two non-real
complex conjugate roots.
\end{lem}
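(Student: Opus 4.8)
The plan is to reduce the statement to a single discriminant computation. Write the polynomial (\ref{eq_under_consideration}) in its depressed form $P(K)=K^{4}+pK^{2}+qK+r$ with $p=-G_{2}/2$, $q=-G_{3}$ and $r=-G_{2}^{2}/48$, and denote by $\Delta$ its discriminant. I claim that $\Delta=-(G_{2}^{3}-27G_{3}^{2})^{2}/27$. The lightest route is via the resolvent cubic: the three quantities $r_{1}r_{2}+r_{3}r_{4}$, $r_{1}r_{3}+r_{2}r_{4}$, $r_{1}r_{4}+r_{2}r_{3}$ built from the roots of $P$ are the roots of $y^{3}-py^{2}-4ry+(4pr-q^{2})$, and since any two of them differ by a product of two root differences of $P$, this cubic has the same discriminant as $P$. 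Substituting $p,q,r$ and completing the cube through $w=y+G_{2}/6$ collapses the resolvent cubic to $w^{3}+(G_{2}^{3}-27G_{3}^{2})/27$, whose discriminant is $-27\bigl[(G_{2}^{3}-27G_{3}^{2})/27\bigr]^{2}$; this gives the claimed value of $\Delta$. Equivalently, one may expand the closed formula $\Delta=16p^{4}r-4p^{3}q^{2}-128p^{2}r^{2}+144pq^{2}r-27q^{4}+256r^{3}$ and simplify. Because $G_{2}^{3}-27G_{3}^{2}>0$ by hypothesis, $\Delta<0$; in particular $\Delta\neq 0$, so $P$ has four simple roots.

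Next I would invoke the classical fact that a quartic with real coefficients and $\Delta<0$ has exactly two real roots and one pair of non-real complex conjugate roots. This follows from $\Delta=\prod_{i<j}(r_{i}-r_{j})^{2}$ by grouping the factors according to whether the roots involved are real or form a conjugate pair: each group is a positive real number, except for the factor $(z-\overline{z})^{2}=-4(\operatorname{Im}z)^{2}<0$ contributed by each conjugate pair, so $\operatorname{sign}\Delta=(-1)^{c}$, where $c$ is the number of conjugate pairs. For a quartic $c\in\{0,1,2\}$, hence $\Delta<0$ forces $c=1$.

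It remains to pin down the signs of the two real roots. Here $P(0)=-G_{2}^{2}/48<0$ since $G_{2}\neq0$, while $P(K)\to+\infty$ as $K\to\pm\infty$; by the intermediate value theorem $P$ has a root in $(-\infty,0)$ and a root in $(0,\infty)$, and since there are only two real roots altogether, one is negative and one is positive, which is exactly the assertion of the lemma. The only genuine obstacle in all of this is the bookkeeping in the discriminant computation: either checking the ``same discriminant'' property of the resolvent cubic and completing the cube without slips, or expanding the six-term quartic discriminant formula correctly. Once $\Delta=-(G_{2}^{3}-27G_{3}^{2})^{2}/27$ is established, the hypotheses give $\Delta<0$ at once and everything else is routine.
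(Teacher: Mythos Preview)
Your proof is correct. The discriminant computation via the resolvent cubic checks out: with $p=-G_{2}/2$, $q=-G_{3}$, $r=-G_{2}^{2}/48$ the resolvent cubic is $y^{3}+(G_{2}/2)y^{2}+(G_{2}^{2}/12)y+G_{2}^{3}/24-G_{3}^{2}$, and the substitution $w=y+G_{2}/6$ kills both the quadratic and the linear term, leaving $w^{3}+(G_{2}^{3}-27G_{3}^{2})/27$, with discriminant $-(G_{2}^{3}-27G_{3}^{2})^{2}/27$. The sign analysis of $\Delta$ and the intermediate-value argument using $P(0)=-G_{2}^{2}/48<0$ are clean.

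The paper takes a completely different route. It invokes the inverse problem for Weierstrass functions: given real $G_{2},G_{3}$ with $G_{2}^{3}-27G_{3}^{2}\neq 0$, there is a lattice and a Weierstrass function $\breve{\wp}$ with those invariants, and a classical exercise identifies the four roots of the quartic as the values $\breve{\wp}(\hat{\omega}/3)$, $\breve{\wp}(\check{\omega}/3)$, $\breve{\wp}([\hat{\omega}\pm\check{\omega}]/3)$. When $G_{2}^{3}-27G_{3}^{2}>0$ the lattice is rectangular (one real and one purely imaginary period), and the known sign pattern of $\breve{\wp}$ on the real and imaginary axes together with the symmetry $\breve{\wp}(\bar{\omega})=\overline{\breve{\wp}(\omega)}$ then reads off the claim directly. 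Your argument is more elementary and entirely self-contained---no elliptic function theory is needed---whereas the paper's argument is the natural one in context, since the whole section already lives in the world of Weierstrass $\wp$, and it has the side benefit of exhibiting the roots explicitly as third-period values, which is precisely how the quartic arose in the first place.
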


\begin{proof}
If some real numbers $G_{2},G_{3}$ such that $G_{2}\neq 0$ and
$G_{2}^{3}-27G_{3}^{2}\neq 0$ are given, then there exists a lattice
$\hat{\omega}\mathbb{Z}+\check{\omega}\mathbb{Z}$ and a Weierstrass
elliptic function w.r.t.\ this lattice, say $\breve{\wp}$, having
for invariants $G_{2},G_{3}$, see Corollary 6.5.8 on page 287 in \cite{JS}.
By using once again Exercise $7$ on page 182 in \cite{LAW}, we then come to the conclusion that
$\breve{\wp}(\hat{\omega}/3),\breve{\wp}(\check{\omega}/3)$,
$\breve{\wp}([\hat{\omega}-\check{\omega}]/3),\breve{\wp}([\hat{\omega}+\check{\omega}]/3)$
are the four roots of the polynomial (\ref{eq_under_consideration}).

Now we prove that the inequality $G_{2}^{3}-27G_{3}^{2}>0$
yields that one of $\breve{\wp}(\hat{\omega}/3),\breve{\wp}(\check{\omega}/3)$ is negative while
the other is positive, and that $\breve{\wp}([\hat{\omega}-\check{\omega}]/3),\breve{\wp}([\hat{\omega}+\check{\omega}]/3)$
are complex conjugate to one another.

%Now we prove that the inequality $G_{2}^{3}-27G_{3}^{2}>0$ (resp.\ $G_{2}^{3}-27G_{3}^{2}<0$),
%implies that one of $\breve{\wp}(\hat{\omega}/3),\breve{\wp}(\check{\omega}/3)$ (resp.\ %$\breve{\wp}([\hat{\omega}-\check{\omega}]/3),\breve{\wp}([\hat{\omega}+\check{\omega}]/3)$) is negative while
%the other is positive and $\breve{\wp}([\hat{\omega}-\check{\omega}]/3),\breve{\wp}([\hat{\omega}+\check{\omega}]/3)$
%(resp.\ $\breve{\wp}(\hat{\omega}/3),\breve{\wp}(\check{\omega}/3)$) are complex conjugate the one another.

If $G_{2}^{3}-27G_{3}^{2}>0$,
then the periods $\hat{\omega},\check{\omega}$ of $\breve{\wp}$ can be chosen such that $\hat{\omega}>0$
and $\check{\omega}/\imath>0$. Indeed, with pages 110--111 (particularly Theorem~3.6.12) of \cite{JS},
we conclude that $\hat{\omega},\check{\omega}$ are either
real and purely imaginary (if the polynomial $4x^{3}-G_{2}x-G_{3}$ has three real roots) or complex conjugate to one another
(if $4x^{3}-G_{2}x-G_{3}$ has only one real root); in addition, it is well known that $4x^{3}-G_{2}x-G_{3}$ has three real roots
if and only if $G_{2}^{3}-27G_{3}^{2}>0$ and only one real root if and only if $G_{2}^{3}-27G_{3}^{2}<0$. Moreover,  on the
parallelogram $[0,\hat{\omega}]\times [-\check{\omega}/(2\imath),\check{\omega}/(2\imath)]$, $\breve{\wp}$ takes real values
on the segments $[0,\hat{\omega}]$, $[-\check{\omega}/2,\check{\omega}/2]$,
$[0,\hat{\omega}]\pm\check{\omega}/2$, $[-\check{\omega}/2,\check{\omega}/2]+\hat{\omega}/2$, and
$[-\check{\omega}/2,\check{\omega}/2]+\hat{\omega}$
and on those segments only, see Part~3.16 on pages $109$--$115$ in~\cite{JS}.

\begin{figure}[!ht]
\begin{center}
\begin{picture}(-100.00,715.00)
\hspace{-100mm}\includegraphics{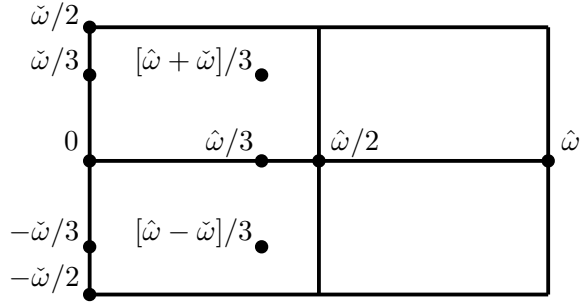}
\end{picture}
\end{center}
\vspace{-213mm}
\caption{The parallelogram $[0,\hat{\omega}]\times[-\check{\omega}/(2\imath),\check{\omega}/(2\imath)]$}
\label{real_values}
\end{figure}

As a first consequence, $\breve{\wp}(\hat{\omega}/3)$ and $\breve{\wp}(\check{\omega}/3)$
are real. Furthermore, on the segments $[0,\hat{\omega}]$ and $[-\check{\omega}/2,\check{\omega}/2]$,
the values of $\breve{\wp}$ are positive and negative respectively, see Part 6.11 on pages $166$--$167$ in \cite{LAW}. This is why $\breve{\wp}(\hat{\omega}/3)>0$ and $\breve{\wp}(\check{\omega}/3)<0$.

As a second consequence, $\breve{\wp}([\hat{\omega}-\check{\omega}]/3)$ and $\breve{\wp}([\hat{\omega}+\check{\omega}]/3)$
are non-real. Since $[\hat{\omega}-\check{\omega}]/3$ and $[\hat{\omega}+\check{\omega}]/3$ are
complex conjugate to one another, so are $\breve{\wp}([\hat{\omega}-\check{\omega}]/3)$ and $\breve{\wp}([\hat{\omega}+\check{\omega}]/3)$,
see also Part 6.11 on pages $166$--$167$ in \cite{LAW}. Lemma~\ref{unique_G2_G3} is proved.
\end{proof}

\begin{rem}
{\rm Lemma~\ref{unique_G2_G3} is also valid for any real numbers $G_{2},G_{3}$ such that
$G_{2}\neq 0$ and $G_{2}^{3}-27G_{3}^{2}<0$.
On the other hand, if $G_{2}^{3}-27G_{3}^{2}=0$, then the polynomial (\ref{eq_under_consideration})
has only real roots.}
\end{rem}

Substituting in (\ref{dop}) equality (\ref{sss}) for
$\wp'_{1,3}(\omega)$ and $\wp'_{1,3}(\omega_3/3)$, we express
$\wp_{1,4}(\omega)$ in terms of $\wp_{1,3}(\omega)$, $g_{2,1,3}$,
$g_{3,1,3}$ and $\wp_{1,3}(\omega_3/3)$. Applying then
Lemmas~\ref{g213g313} and~\ref{K_algebraic}, we get:
\begin{eqnarray*}
            \wp_{1,3}(\omega)^{3}
            -\wp_{1,3}(\omega)^{2}\big[\wp_{1,4}(\omega)+2K\big]\hspace{-2mm}
            &+&\hspace{-2mm}\wp_{1,3}(\omega)\big[2K\wp_{1,4}(\omega)+K^{2}/3+G_{2}/2\big]\\
            \hspace{-2mm}&-&\hspace{-2mm}\big[K^{2}\wp_{1,4}(\omega)+19G_{2}K/18+
            G_{3}-46K^{3}/27\big]=0.
       \end{eqnarray*}
In particular, evaluating this identity at
$\omega=\wp^{-1}(f(t))-[\omega_{1}+\omega_{2}]/2$, using (\ref{ff})
and taking into account (\ref{zgh}), we obtain Equation
(\ref{eq_w_w_tilde_algebraic}) for the CGF $w$ defined in (\ref{expression_CGFx}).

\medskip

If $F$, defined in (\ref{def_F_F_tilde}), is infinite at some point $t_{0}$, then
Equation~(\ref{eq_w_w_tilde_algebraic}) yields $[w(t_{0})-K]^{2}=0$. Thus $K$
is a double root of (\ref{eq_w_w_tilde_algebraic}) at $t_{0}$. In
addition,  the double root
   $K$ being non-zero by its definition via (\ref{KKK}) and
the product $K^{2}F(t_{0})+19G_{2}K/18+
            G_{3}-46K^{3}/27$ of all roots of the polynomial (\ref{eq_w_w_tilde_algebraic})
being infinite, the third root of (\ref{eq_w_w_tilde_algebraic}) must be
infinite.

Since $F$ is infinite at $x_{2}$, see (\ref{def_F_F_tilde}), and
since $w$ has a pole at $x_{2}$, see Subsection~\ref{Implicit_FIM},
 $w$
can thus be characterized as the unique solution to
(\ref{eq_w_w_tilde_algebraic}) with a pole at $x_{2}$---the two
other solutions are finite at $x_{2}$ and equal to $K$. The part of
Theorem~\ref{w_w_tilde_algebraic} dealing with a CGF for the set
$\mathscr{G}X([y_{1},y_{2}])$ is concluded.

\medskip

Now we prove the corresponding fact for
$\mathscr{G}Y([x_{1},x_{2}])$. Since $w$ is a solution to
(\ref{eq_w_w_tilde_algebraic}) and since an easy calculation gives
$F(X_{0}(t))=\widetilde{F}(t)$, we conclude that $\widetilde{w}$
satisfies the equation obtained from ({\ref{eq_w_w_tilde_algebraic}})
by replacing $F(t)$
by $\widetilde{F}(t)$. Furthermore, both $\widetilde{F}$ and
$\widetilde{w}$ have a pole at $x_{3}$: for $\widetilde{F}$, this is
a consequence of (\ref{def_F_F_tilde}) and for $\widetilde{w}$, this
follows from Lemma~\ref{me_fu}. Using  the same arguments as
above for $w$, we then derive that $\widetilde{w}$ can be characterized
as the only  function having a pole at $x_{3}$ and solution to
the equation obtained from ({\ref{eq_w_w_tilde_algebraic}}) by replacing $F(t)$
by $\widetilde{F}(t)$.
%\end{proof}
$\hfill \square$

\section{Holomorphic continuation of {\boldmath$zQ(x,0,z)$, $z(y+1)Q(0,y,z)$}}
\label{CONTINUATION}

In this part, we shall prove Theorem~\ref{Thm_continuation}. In other words,
we shall show that $zQ(x,0,z)$ and $z(y+1)Q(0,y,z)$ can be holomorphically
continued from their unit disc up to $\mathbb{C}\setminus [x_{3},x_{4}]$ and
$\mathbb{C}\setminus [y_{3},y_{4}]$ respectively.

%In fact, we are going to show that $Q(x,0,z)$ and $Q(0,y,z)$ can be holomorphically
%continued up to $\mathbb{C}\setminus [x_{3},x_{4}]$ and $\mathbb{C}\setminus [y_{3},y_{4}]$
%respectively; this assertion is equivalent, as proved at the end of the proof of
%Theorem~\ref{thm_continuation}.

\begin{proof}[Proof of Theorem~\ref{Thm_continuation}]
First of all, we lift the functions $Q(x,0,z)$ and $Q(0,y,z)$
up to $\mathbb{C}/\Omega$ by setting
$q_{x}(\omega)=Q(x(\omega),0,z)$
                and $q_{y}(\omega)=Q(0,y(\omega),z)$.
                We recall that $x(\omega)$ and $y(\omega)$,
                the coordinates of the uniformization, are
                defined in (\ref{uniformization}).
                The functions
                $q_{x}$ and $q_{y}$ are \text{a priori} well defined on
                $x^{-1}(\{|x|\leq 1\})$ and $y^{-1}(\{|y|\leq 1\})$ respectively.
                Then, we use the following result, that we shall prove in a few lines.
                \begin{thm}
                \label{thm_continuation}
                  $q_{x}(\omega)$ and $(y(\omega)+1)q_{y}(\omega)$, initially well defined on
                   $x^{-1}(\{|x|\leq 1\})$ and $y^{-1}(\{|y|\leq 1\})$ respectively,
                     can be holomorphically continued up to the whole parallelogram
                     $\mathbb{C}/\Omega$ cut along $[0,\omega_{1}[$ and
                     $[0,\omega_{1}[+\omega_{3}/2$ respectively. Moreover, these continuations satisfy
                          \begin{equation}
                          \label{in_view_of_projecting}
                               q_{x}(\omega)=q_{x}(\psi(\omega)),
                               \ \forall \omega\in\mathbb{C}/\Omega
                               \setminus [0,\omega_{1}[,
                               \ \ \ \ \ q_{y}(\omega)=q_{y}(\phi(\omega)),
                               \ \forall \omega\in\mathbb{C}/\Omega
                               \setminus ([0,\omega_{1}[+\omega_{3}/2),
                          \end{equation}
                     and
                          \begin{equation}
                          \label{continuation_functional_equation}
                               zq_{x}(\omega)+z(y(\omega)+1)q_{y}(\omega)
                               -zQ(0,0,z)-x(\omega)y(\omega)=0,
                               \ \forall \omega\in]3\omega_{2}/8,\omega_{2}[\times
                               [0,\omega_{1}/\imath[.
                          \end{equation}
                \end{thm}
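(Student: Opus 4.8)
The plan is to run the lifting argument of \cite{FIM} on the torus $\mathbb{C}/\Omega$, in three stages: produce the relation (\ref{continuation_functional_equation}) on a small region, propagate $q_{x}$ and $(y(\omega)+1)q_{y}(\omega)$ over the whole parallelogram by alternately using that relation and the two involutions $\psi,\phi$, and finally check holomorphy and single-valuedness. For the seed relation, I would observe that the left-hand side of (\ref{functional_equation}) is $L(x,y,z)Q(x,y,z)$ up to the nowhere-zero factor $xyz$, hence vanishes on $\{L(\cdot,\cdot,z)=0\}$; so for every $(x,y)$ with $L(x,y,z)=0$, $|x|\leq 1$, $|y|\leq 1$ one has $zQ(x,0,z)+z(y+1)Q(0,y,z)-zQ(0,0,z)-xy=0$. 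Composing with the uniformization (\ref{uniformization}) this reads $zq_{x}(\omega)+z(y(\omega)+1)q_{y}(\omega)-zQ(0,0,z)-x(\omega)y(\omega)=0$ on $\mathcal{U}=x^{-1}(\{|x|\leq 1\})\cap y^{-1}(\{|y|\leq 1\})$. I would then use the explicit formulas for $x(\omega)$, $y(\omega)$ together with Proposition~\ref{transformation_cycles_uniformization} to locate the open set $\mathcal{U}$ inside the fundamental parallelogram and to check that it meets $]3\omega_{2}/8,\omega_{2}[\times[0,\omega_{1}/\imath[$; this geometric bookkeeping is the computational core of the proof.

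For the propagation I would use two facts. Since $x\circ\psi=x$ and $Q(x,0,z)=\sum_{i,k\geq 0}q(i,0,k)x^{i}z^{k}$ depends only on $x$, one has $q_{x}\circ\psi=q_{x}$ wherever $q_{x}$ is already defined, and symmetrically $q_{y}\circ\phi=q_{y}$ from $y\circ\phi=y$. On the other hand, solving the seed relation for $q_{x}$ gives $zq_{x}(\omega)=-z(y(\omega)+1)q_{y}(\omega)+zQ(0,0,z)+x(\omega)y(\omega)$, whose right-hand side is holomorphic on all of $y^{-1}(\{|y|\leq1\})$; this continues $q_{x}$ from $\mathcal{U}$ onto $x^{-1}(\{|x|\leq1\})\cup y^{-1}(\{|y|\leq1\})$, and likewise $(y(\omega)+1)q_{y}(\omega)$. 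Alternating this step with the reflections $\psi$ and $\phi$ moves the domains around by the iterates of $\phi\circ\psi$, i.e.\ by multiples of $\omega_{3}$; because $\omega_{3}=3\omega_{2}/4$ by Proposition~\ref{www} and $\langle\psi,\phi\rangle$ has order eight, after finitely many steps the domains cover $\mathbb{C}/\Omega$ except for the two cycles $x^{-1}([x_{3},x_{4}])=[0,\omega_{1}[$ and $y^{-1}([y_{3},y_{4}])=[0,\omega_{1}[+\omega_{3}/2$ (cf.\ Proposition~\ref{transformation_cycles_uniformization}), which are exactly the pullbacks of the branch cuts of $Y(x,z)$ and $X(y,z)$ and which the continuation cannot cross, as in Chapter~3 of \cite{FIM}. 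Keeping track of which reflection is used at each step yields the identities (\ref{in_view_of_projecting}).

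It then remains to settle two delicate points. First, no poles are created along the way: unlike in \cite{FIM}, the unknowns $zQ(x,0,z)$ and $z(y+1)Q(0,y,z)$ have constant coefficients, so the a priori poles coming from $x(\omega)y(\omega)$ in the propagation step must be shown to cancel, which I would do exactly as in Chapter~3 of \cite{FIM}. Second, and this is the main obstacle, one must verify that the continuation is single-valued on $\mathbb{C}/\Omega$ minus the two slits, i.e.\ that reaching a given $\omega$ by different chains of reflections and functional-equation steps gives the same value; this is precisely where the compatibility of $\psi$ and $\phi$ with the seed relation, together with the order-eight structure and $\omega_{3}=3\omega_{2}/4$, is essential. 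Once $q_{x}$ and $(y(\omega)+1)q_{y}(\omega)$ have been shown holomorphic on $]3\omega_{2}/8,\omega_{2}[\times[0,\omega_{1}/\imath[$ (a region free of poles of $x$ and of $y$), the seed relation — valid on the open subset $\mathcal{U}$ and now involving only functions holomorphic on that strip — propagates to all of it by analytic continuation, giving (\ref{continuation_functional_equation}) and completing the proof.
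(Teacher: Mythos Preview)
Your approach is essentially the paper's: seed the functional equation on the overlap $\mathcal{U}$, extend via that equation, then reflect by $\psi,\phi$. The paper makes this concrete by first proving (Proposition~\ref{transformation_circles_uniformization}) that $x^{-1}(\{|x|\leq1\})=[\omega_2/4,3\omega_2/4]\times[0,\omega_1/\imath[$ and $y^{-1}(\{|y|\leq1\})=[5\omega_2/8,9\omega_2/8]\times[0,\omega_1/\imath[$ --- this concerns the unit \emph{circles}, not the branch-cut cycles of Proposition~\ref{transformation_cycles_uniformization} which you invoke --- and with these strips in hand one functional-equation step plus one $\psi$-reflection already covers $\mathbb{C}/\Omega\setminus[0,\omega_1[$ for $q_x$, so no iteration or monodromy check is needed.

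Two minor slips to fix. The strip $]3\omega_2/8,\omega_2[\times[0,\omega_1/\imath[$ is \emph{not} pole-free for $x$: there is a simple pole of $x$ at $7\omega_2/8$; what matters is that $x(\omega)y(\omega)$ is holomorphic there because $y$ has a double zero at the same point (Lemma~\ref{some_particular_values}). And the absence of poles is not ``exactly as in Chapter~3 of \cite{FIM}'' --- in \cite{FIM} the continuations are only meromorphic --- but is specific to this model and comes precisely from that order computation at $7\omega_2/8$.
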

%\begin{rem}
%{Both (\ref{ooo}) and (\ref{ooo1}) are immediate consequences
%of (\ref{continuation_functional_equation}).}
%\end{rem}
 Finally, we set $Q(x,0,z)=q_{x}(\omega)$ if $x(\omega)=x$ as well as
                $Q(0,y,z)=q_{y}(\omega)$ if $y(\omega)=y$. Thanks to (\ref{in_view_of_projecting})
                and Proposition~\ref{transformation_cycles_uniformization},
                these equalities define not ambiguously $Q(x,0,z)$ and $Q(0,y,z)$ on
                $\mathbb{C}\setminus [x_{3},x_{4}]$ and $\mathbb{C}\setminus [y_{3},y_{4}]$
                respectively, as holomorphic functions.
                 Furthermore, the statements
                   (\ref{ooo}) and (\ref{ooo1}) are immediate consequences
of (\ref{continuation_functional_equation}).
\end{proof}

%Items \ref{step_lifting} and \ref{projecting} are straightforward.

The proof of Theorem~\ref{Thm_continuation} is now reduced to that
of Theorem~\ref{thm_continuation}. In order to carry out this
proof, we need to find the location of the cycles
$x^{-1}(\{|x|=1\})$ and $y^{-1}(\{|y|=1\})$ on $\mathbb{C}/\Omega$.
This is the subject of the following result, illustrated on Figure
\ref{Locations_of_the_cycles} below.

\begin{figure}[!ht]
\begin{center}
\begin{picture}(000.00,710.00)
\hspace{-100mm}\includegraphics{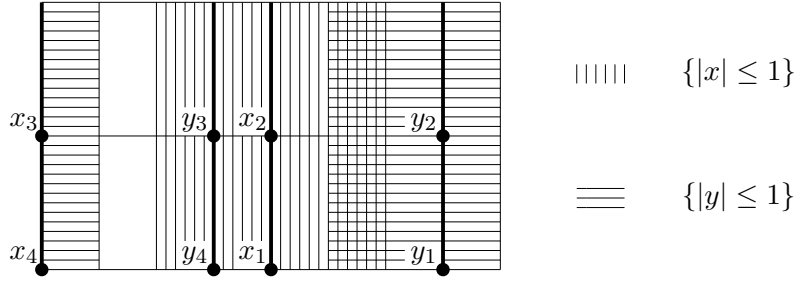}
\end{picture}
\end{center}
\vspace{-213mm}
\caption{Location of the important cycles on the surface $[0,\omega_{2}[\times[0,\omega_{1}/\imath[$}
\label{Locations_of_the_cycles}
\end{figure}

     \begin{prop}
     \label{transformation_circles_uniformization}
               We have $x^{-1}(\{|x|=1\})=([0,\omega_{1}[+\omega_{2}/4)\cup
               ([0,\omega_{1}[+3\omega_{2}/4)$ as well as
               $y^{-1}(\{|y|=1\})=([0,\omega_{1}[+\omega_{2}/8)\cup
               ([0,\omega_{1}[+5\omega_{2}/8)$.
     \end{prop}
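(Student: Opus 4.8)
The plan is to combine three ingredients: the compatibility of the uniformization (\ref{uniformization}) with complex conjugation, the action of the group element $(\Phi\Psi)^{2}$, and the degree-two covering structure of $x$ and $y$. First I would record that $\overline{\Omega}=\Omega$, since $\omega_{1}$ is purely imaginary and $\omega_{2}$ real, and that $g_{2},g_{3}$ of Lemma~\ref{lemma_g_2_g_3} are real; hence $\wp(\overline{\omega})=\overline{\wp(\omega)}$ and $\wp'(\overline{\omega})=\overline{\wp'(\omega)}$. As $x_{4},d'(x_{4}),d''(x_{4})$ are real, (\ref{uniformization}) then gives $x(\overline{\omega})=\overline{x(\omega)}$ and $y(\overline{\omega})=\overline{y(\omega)}$. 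Consequently, for any $\omega$ with $x(\omega)\notin\{0,\infty\}$ one has $|x(\omega)|=1$ if and only if $x(\omega)\,x(\overline{\omega})=1$, and likewise for $y$; the points with $x(\omega)\in\{0,\infty\}$ (resp.\ $y(\omega)\in\{0,\infty\}$) never lie on $\{|x|=1\}$ (resp.\ $\{|y|=1\}$), so they may be discarded.

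Second I would translate the maps $x\mapsto 1/x$ and $y\mapsto 1/y$ into a shift of $\mathbb{C}/\Omega$. A direct computation gives $(\Phi\Psi)^{2}(x,y)=(1/x,1/y)$, and since $\Psi,\Phi$ correspond to $\psi,\phi$ of (\ref{expression_automorphisms_torus}), the automorphism $(\Phi\Psi)^{2}$ of $\mathscr{L}$ corresponds to $(\phi\psi)^{2}$, namely $\omega\mapsto\omega+2\omega_{3}$. By Proposition~\ref{www}, $2\omega_{3}=3\omega_{2}/2\equiv\omega_{2}/2\pmod{\Omega}$, hence $x(\omega+\omega_{2}/2)=1/x(\omega)$ and $y(\omega+\omega_{2}/2)=1/y(\omega)$ identically on $\mathbb{C}/\Omega$.

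Third I would solve the resulting congruences. Write $\omega=\alpha\omega_{2}+\beta\omega_{1}$ with $\alpha,\beta\in\mathbb{R}$, so that $\overline{\omega}=\alpha\omega_{2}-\beta\omega_{1}$. For $x$: the condition $x(\overline{\omega})=1/x(\omega)=x(\omega+\omega_{2}/2)$ means, since $x$ is two-to-one with $x\circ\psi=x$, that $\overline{\omega}\equiv\pm(\omega+\omega_{2}/2)\pmod{\Omega}$; the $+$ alternative reads $-2\beta\omega_{1}\equiv\omega_{2}/2$, which is impossible, while the $-$ alternative reads $2\alpha\omega_{2}\equiv-\omega_{2}/2$, i.e.\ $\alpha\in\{1/4,3/4\}\pmod{1}$. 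This yields $x^{-1}(\{|x|=1\})=([0,\omega_{1}[+\omega_{2}/4)\cup([0,\omega_{1}[+3\omega_{2}/4)$. For $y$: since $y\circ\phi=y$ with $\phi(\omega)=-\omega+\omega_{3}$, the condition $y(\overline{\omega})=y(\omega+\omega_{2}/2)$ means $\overline{\omega}\equiv\omega+\omega_{2}/2$ or $\overline{\omega}\equiv\phi(\omega+\omega_{2}/2)=-\omega+\omega_{3}-\omega_{2}/2\pmod{\Omega}$; the first is again impossible, and the second reads $2\alpha\omega_{2}\equiv\omega_{3}-\omega_{2}/2=\omega_{2}/4$ (using Proposition~\ref{www}), i.e.\ $\alpha\in\{1/8,5/8\}\pmod{1}$, whence $y^{-1}(\{|y|=1\})=([0,\omega_{1}[+\omega_{2}/8)\cup([0,\omega_{1}[+5\omega_{2}/8)$.

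I expect the main delicate point to be the bookkeeping in this last step: correctly using that the two preimages of a value under $x$ (resp.\ $y$) are exchanged by $\psi$ (resp.\ $\phi$), and reducing the congruences modulo $\Omega$ so as to capture exactly the four segments above and no spurious ones. A useful consistency check is that $\omega_{2}/4,3\omega_{2}/4$ separate $x^{-1}([x_{3},x_{4}])=[0,\omega_{1}[$ from $x^{-1}([x_{1},x_{2}])=[0,\omega_{1}[+\omega_{2}/2$, and similarly that $\omega_{2}/8,5\omega_{2}/8$ interleave correctly with $y^{-1}([y_{3},y_{4}])=[0,\omega_{1}[+\omega_{3}/2$ and $y^{-1}([y_{1},y_{2}])=[0,\omega_{1}[+[\omega_{2}+\omega_{3}]/2$ from Proposition~\ref{transformation_cycles_uniformization}, with $|x|>1$ (resp.\ $|y|>1$) on the component containing $[x_{3},x_{4}]$ (resp.\ $[y_{3},y_{4}]$); this confirms that the orientations come out right.
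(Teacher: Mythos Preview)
Your argument is correct and follows a genuinely different route from the paper's. For $x$, the paper first computes directly (via (\ref{addition_theorem}) and the known half-period values of $\wp$) that $x(\omega_{2}/4+\omega_{1}/2)=1$, then writes $x^{-1}(e^{\imath\theta})$ as an elliptic integral from this base point and invokes the polynomial identity $d(x)=x^{4}d(1/x)$ to show that the integral is purely imaginary; a topological closure argument finally upgrades the resulting inclusion to an equality. You instead realize the map $(x,y)\mapsto(1/x,1/y)$ as the torus translation by $\omega_{2}/2$ via $(\phi\psi)^{2}$ and Proposition~\ref{www}, combine it with the conjugation symmetry $x(\overline{\omega})=\overline{x(\omega)}$, and solve the resulting lattice congruences directly.

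Your approach is more conceptual, handles $x$ and $y$ on an equal footing, and delivers the full equality in one stroke without a separate topological step. The price is the reliance on Proposition~\ref{www} (that is, on the group having order eight); the paper's integral argument for $x$ uses only the palindromy of $d$ and would go through in related models independently of the group order. One small point worth making explicit in your write-up is that the equivalence $x(a)=x(b)\Leftrightarrow a\equiv\pm b\pmod{\Omega}$, and its $\phi$-analogue for $y$, holds for \emph{all} $a,b$ and not merely generically, because $x$ and $y$ are elliptic of order exactly two on $\mathbb{C}/\Omega$ (cf.\ Lemma~\ref{some_particular_values}); this is what ensures the congruences pick out precisely the four stated segments and nothing else.
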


\begin{proof}
     The details are of course essentially the same for $x$ and $y$, so
     that we are going to prove only the assertion dealing with $x$.

     First of all, we note that because of the equality $x\circ \psi=x$,
     it is sufficient to prove that
     $x^{-1}(\{|x|=1\})\cap ([0,\omega_{2}/2[\times [0,\omega_{1}/\imath[)=[0,\omega_{1}[+
     \omega_{2}/4$---the advantage of this is that $\wp$, and therefore also $x$, are
     one-to-one in the half-parallelogram $[0,\omega_{2}/2[\times [0,\omega_{1}/\imath[$.

     The proof is then composed of three steps.

     \medskip

\noindent{\it  Step 1.}  We prove that
$x(\omega_{2}/4+\omega_{1}/2)=1$. For this, we recall
     that $\wp(\omega_{2}/4)=(1+4z^{2})/3$, $\wp'(\omega_{2}/4)=-8z^{2}$, $\wp(\omega_{1}/2)
     =f(x_{3})$ with $f$ defined in (\ref{ffu})
      and $\wp'(\omega_{1}/2)=0$, see Lemma~\ref{ll} and
     its proof.  With addition formula (\ref{addition_theorem}), we then immediately
     obtain the explicit value of $\wp(\omega_{2}/4+\omega_{1}/2)$.
      Finally, by
     using Equation (\ref{uniformization}) and after a simple calculation,
      we get $x(\omega_{2}/4+\omega_{1}/2)=1$.

      \medskip

\noindent{\it Step 2.} We show that $x^{-1}(\{|x|=1\})\cap
([0,\omega_{2}/2[\times [0,\omega_{1}/\imath[)
     \subset[0,\omega_{1}[+\omega_{2}/4$.
      For this, let $\theta\in [0,2\pi[$ and consider
      the equation $x(\omega)=\exp(\imath \theta)$. Thanks to (\ref{uniformization}) and (\ref{ffu}),
     we obtain $\wp(\omega)=f(\exp(\imath \theta))$ and thus $\omega=\wp^{-1}(f(\exp(\imath \theta)))$.
     We can then use the explicit expression
     of the inverse function of $\wp$ on $[0,\omega_{2}/2[\times [0,\omega_{1}/\imath[$, see e.g.\ Part 6.12
     on pages 167--172 in \cite{LAW},
     %(6.12.3)
     and we get
          \begin{equation}
          \label{eq_with_reciprocal}
               \omega=x^{-1}(1)+
               \int_{f(1)}^{f(\exp(\imath \theta))}\frac{\text{d}t}
               {[4t^{3}-g_{2}t-g_{3}]^{1/2}}\\
               =\omega_{2}/4+\omega_{1}/2+\frac{1}{2}
               \int_{\exp(\imath \theta)}^{1}\frac{\text{d}x}{[d(x)]^{1/2}},
          \end{equation}
     where $d$ is defined in Section~\ref{reduc} and $g_{2},g_{3}$ in
     Lemma~\ref{lemma_g_2_g_3}.
     Note that the second equality above comes from the first step and from
     exactly the same calculations as in Part 3.3 of \cite{FIM}.
     Now we notice that $d(x)=x^{4}d(1/x)$.
     In particular, the change of variable $x\mapsto 1/x$ in the integral
     $\int_{\exp(\imath \theta)}^{1}\text{d}x/[d(x)]^{1/2}$ yields
     $\int_{\exp(\imath \theta)}^{1}\text{d}x/[d(x)]^{1/2}=-\int_{\exp(-\imath \theta)}^{1}
     \text{d}x/[d(x)]^{1/2}$. As a consequence, this integral belongs to $\imath \mathbb{R}$.
     In conclusion, with (\ref{eq_with_reciprocal}), we have shown that
     $x^{-1}(\{|x|=1\})\cap([0,\omega_{2}/2[\times$ $[0,\omega_{1}/\imath[)
     \subset[0,\omega_{1}[+\omega_{2}/4$.

     \medskip

\noindent{\it Step 3.} We prove that the inclusion above has to be
an equality. Indeed, if it was not
     the case, the curve $x^{-1}(\{|x|=1\})\cap ([0,\omega_{2}/2[\times [0,\omega_{1}/\imath[)$
     would be \textit{not} closed, which is a manifest contradiction with the facts that $\{|x|=1\}$
     is closed and that $x$ is meromorphic as well as one-to-one in the half-parallelogram
     $[0,\omega_{2}/2[\times [0,\omega_{1}/\imath[$.
\end{proof}

\begin{proof}[Proof of Theorem~\ref{thm_continuation}]
This proof is composed of two steps: at first, we shall
define the continuations of $q_{x}$ and $q_{y}$ on the
parallelogram $\mathbb{C}/\Omega$ cut along $[0,\omega_{1}[$ and
$[0,\omega_{1}[+\omega_{3}/2$ respectively; then, we shall
verify that these continuations satisfy the conclusions of
Theorem~\ref{thm_continuation}.

\medskip

\noindent{\it  Step 1.} We define the continuations of $q_{x}$ and
$q_{y}$.

\begin{enumerate}
\item\label{fi_tsp} We define $q_{x}(\omega)$ on $x^{-1}(\{|x|\leq 1\})$
                by $Q(x(\omega),0,z)$ and $q_{y}(\omega)$ on $y^{-1}
                (\{|y|\leq 1\})$ by $Q(0,y(\omega),z)$. Note that as a
                consequence of Proposition~\ref{transformation_circles_uniformization},
                we have $x^{-1}(\{|x|\leq 1\})=[\omega_{2}/4,3\omega_{2}/4]\times
                [0,\omega_{1}/\imath[$ and $y^{-1}(\{|y|\leq 1\})=[5\omega_{2}/8,9\omega_{2}/8]
                \times [0,\omega_{1}/\imath[$.

\item\label{se_tsp} Motivated by (\ref{functional_equation}), on
                $[3\omega_{2}/4,\omega_{2}[\times [0,\omega_{1}/\imath[
                \subset y^{-1}(\{|y|\leq 1\})$, we set $q_{x}(\omega)=
                -(y(\omega)+1)q_{y}(\omega)+Q(0,0,z)+x(\omega)y(\omega)/z$
                and on $]3\omega_{2}/8,5\omega_{2}/8]\times [0,\omega_{1}/\imath[
                \subset x^{-1}(\{|x|\leq 1\})$, we set $(y(\omega)+1)q_{y}(\omega)
                =-q_{x}(\omega)+Q(0,0,z)+x(\omega)y(\omega)/z$.

\item\label{th_tsp} On $]0,\omega_{2}/4]\times [0,\omega_{1}/\imath[$, we define
                $q_{x}(\omega)$ by $q_{x}(\phi(\omega))$. Note that with Equation
                (\ref{expression_automorphisms_torus}), we have $\phi(]0,\omega_{2}/4]
                \times [0,\omega_{1}/\imath[)=[3\omega_{2}/4,\omega_{2}[\times [0,\omega_{1}/\imath[$.
                On $[\omega_{2}/8,3\omega_{2}/8[\times [0,\omega_{1}/\imath[$, we define
                $q_{y}(\omega)$ by $q_{y}(\psi(\omega))$. By using (\ref{expression_automorphisms_torus}),
                we have $\psi([\omega_{2}/8,3\omega_{2}/8[\times [0,\omega_{1}/\imath[)=]3\omega_{2}/8,
                5\omega_{2}/8]\times [0,\omega_{1}/\imath[$.
\end{enumerate}
The functions $q_{x}$ and $q_{y}$ are now well defined on the whole parallelogram
$\mathbb{C}/\Omega$ cut along $[0,\omega_{1}[$ and $[0,\omega_{1}[+ \omega_{3}/2$
respectively.

%Let us note that the definition given in \ref{fi_tsp} above is quite natural. The one
%stated in \ref{se_tsp} is also natural since on $x^{-1}(\{|x|\leq 1\})\cap
%y^{-1}(\{|y|\leq 1\})=[5\omega_{2}/8,3\omega_{2}/4]\times [0,\omega_{1}/\imath[$,
%the equality $q_{x}(\omega)+(y(\omega)+1)q_{y}(\omega)-Q(0,0,z)-x(\omega)
%y(\omega)/z=0$ holds, see (\ref{functional_equation}). The definition set in
%\ref{th_tsp} is to ensure that (\ref{in_view_of_projecting}) is valid.

\medskip

\noindent\textit{Step 2}. We prove that the continuations of
$q_{x}$ and $q_{y}$ defined in the first step satisfy the different assertions of
Theorem~\ref{thm_continuation}.

  Let us verify Equation (\ref{in_view_of_projecting})
for $q_{x}$. By using \ref{fi_tsp} as well as the equality $x\circ
\psi=x$, (\ref{in_view_of_projecting}) is manifestly satisfied on
$[\omega_{2}/4,3\omega_{2}/4]\times
[0,\omega_{1}/\imath[=\psi([\omega_{2}/4,3\omega_{2}/4]\times
[0,\omega_{1}/\imath[)$. Moreover, with \ref{th_tsp},
(\ref{in_view_of_projecting}) is satisfied for $q_{x}$ on
$]0,\omega_{2}/4]\times [0,\omega_{1}/\imath[$. Since $\psi^{2}
=\text{id}$, (\ref{in_view_of_projecting}) is also true for $q_{x}$
on $[3\omega_{2}/4,\omega_{2} [\times [0,\omega_{1}/\imath[$ and thus
finally on the whole $\mathbb{C}/\Omega\setminus [0,\omega_{1}[$.
   Likewise, we easily verify  that (\ref{in_view_of_projecting}) is
valid for $q_{y}$ on $\mathbb{C}/\Omega  \setminus
([0,\omega_{1}[+3\omega_{2}/8)$. Equation
(\ref{continuation_functional_equation}) is immediately true, by
construction of the continuations.

   It remains to prove that the continuations of $q_{x}$ and $(y+1)q_{y}$
are holomorphic on $\mathbb{C}/\Omega$ cut along $[0,\omega_{1}[$
and $[0,\omega_{1}[+3\omega_{2}/8$ respectively.

     We first show that $q_x$ is {meromorphic} on its respective
 cut parallelogram. The following cycles are \text{a priori} problematic for $q_{x}$:
$[0,\omega_{1}[$, $[0,\omega_{1}[+\omega_{2}/4$ and
$[0,\omega_{1}[+3\omega_{2}/4$. In an open neighborhood of
$[0,\omega_{1}[+3\omega_{2}/4$, we have
$q_{x}(\omega)=-(y(\omega)+1)q_{y}(\omega)+
Q(0,0,z)+x(\omega)y(\omega)/z$, so that $q_{x}$ is in fact
meromorphic in the neighborhood of the cycle
$[0,\omega_{1}[+3\omega_{2}/4$. Since Equation
(\ref{in_view_of_projecting}) holds, $q_{x}$ is also meromorphic
near
$[0,\omega_{1}[+\omega_{2}/4=\psi([0,\omega_{1}[+3\omega_{2}/4)$.
Thus  $[0,\omega_{1}[$ remains the only singular cycle for $q_{x}$.
Furthermore, $q_{x}$ is clearly holomorphic on
$]\omega_{2}/4,3\omega_{2}/4]\times [0,\omega_{1}/\imath[$, since it
is  defined there through the power series $Q(x,0,z)$. On
$]5\omega_{2}/8,\omega_{2}[\times [0,\omega_{1}/\imath[$, we have
$q_{x}(\omega)=
-(y(\omega)+1)q_{y}(\omega)+Q(0,0,z)+x(\omega)y(\omega)/z$ and the
first two terms in the right-hand side of this equality are
holomorphic on this domain.
  With Lemma~\ref{some_particular_values} below,
  the product $x(\omega)y(\omega)$ may have a pole on this domain only
  at
$7 \omega_2/8$: in fact,
    $x$ has a pole of order one at this point but
     $y$ has there a zero of order two, so that the product
     $x(\omega) y(\omega)$ is holomorphic near
$7\omega_{2}/8$. On $]0,3\omega_{2}/8[\times [0,\omega_{1}/\imath[$,
we have $q_{x}=q_{x}\circ \psi$, so that $q_{x}$ is holomorphic on
this domain, since it is on $\psi(]0,3\omega_{2}/8[ \times
[0,\omega_{1}/\imath[)=]5\omega_{2}/8,\omega_{2}[\times
[0,\omega_{1}/\imath[$.
%If the cycle $[0,\omega_{1}[$ was not singular, the function $q_{x}(\omega)$ would
%be elliptic and $Q(x,0,z)$ rational, which is a priori not a problem.
%Note that the equation  $zq_{x}(\omega)+z(y(\omega)+1)q_{y}(\omega)-zQ(0,0,z)-x(\omega)
%y(\omega)=0$ holds on $[(3/8)\omega_{2},\omega_{2}]\times [0,\omega_{1}/\imath[$.

A similar reasoning yields that $(y+1)q_{y}$ is holomorphic on
$\mathbb{C}/\Omega \setminus ([0,\omega_{1}[+3\omega_{2}/8)$.
\end{proof}

\begin{cor}
The function $q_{y}$ is holomorphic on
$\mathbb{C}/\Omega \setminus ([0,\omega_{1}[+3\omega_{2}/8)$.
\end{cor}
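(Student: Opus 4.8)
The statement is an immediate corollary of Theorem~\ref{thm_continuation}: that theorem already yields that $(y(\omega)+1)q_y(\omega)$ is holomorphic on $\mathcal{P}:=\mathbb{C}/\Omega\setminus([0,\omega_1[+3\omega_2/8)$, so the only work left is to remove the factor $y(\omega)+1$ without introducing a pole. The plan is to identify $q_y$ with the lift to $\mathbb{C}/\Omega$ of the \emph{holomorphic} continuation of $Q(0,\cdot,z)$.

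First I would recall, from the remark immediately following Theorem~\ref{Thm_continuation}, that $Q(0,\cdot,z)$ extends to a function $\widehat{Q}_y$ holomorphic on $\mathbb{C}\setminus[y_3(z),y_4(z)]$, the point $y=-1$ being no pole precisely because $\sum_{j,k\geq 0}q(0,j,k)z^k$ converges for $z\in\,]0,1/4[$. Set $\widehat{q}_y(\omega):=\widehat{Q}_y(y(\omega))$. Using Proposition~\ref{transformation_cycles_uniformization} together with $\omega_3=3\omega_2/4$ (Proposition~\ref{www}), one has $y^{-1}([y_3(z),y_4(z)])=[0,\omega_1[+\omega_3/2=[0,\omega_1[+3\omega_2/8$; hence on $\mathcal{P}$ the elliptic function $y$ takes no value in $[y_3(z),y_4(z)]$ (in particular $y$ is finite and holomorphic there), so $\widehat{q}_y=\widehat{Q}_y\circ y$ is holomorphic on exactly $\mathcal{P}$. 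Moreover, on the nonempty open set $y^{-1}(\{|y|<1\})\subseteq\mathcal{P}$ the defining power series of $Q(0,\cdot,z)$ coincides with $\widehat{Q}_y$, so by step~\ref{fi_tsp} of the proof of Theorem~\ref{thm_continuation} we get $q_y=\widehat{q}_y$ there.

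To finish, I would note that $\mathcal{P}$ is connected (the torus cut along the non-separating cycle $[0,\omega_1[+3\omega_2/8$ is a cylinder) and that $q_y=[(y(\cdot)+1)q_y]/(y(\cdot)+1)$ is meromorphic on $\mathcal{P}$ by Theorem~\ref{thm_continuation}. Since the meromorphic $q_y$ and the holomorphic $\widehat{q}_y$ agree on the nonempty open set $y^{-1}(\{|y|<1\})$, they agree on all of $\mathcal{P}$; hence $q_y=\widehat{q}_y$ is holomorphic on $\mathcal{P}$, which is the claim. I do not anticipate any genuine obstacle here: the only point needing a line of care is the bookkeeping identity $y^{-1}([y_3(z),y_4(z)])=[0,\omega_1[+3\omega_2/8$, which is exactly Proposition~\ref{transformation_cycles_uniformization} once $\omega_3=3\omega_2/4$ is invoked, and which is what makes the cut of $\widehat{q}_y$ match the cut in the corollary.
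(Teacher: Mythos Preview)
Your proof is correct; the underlying idea is the same as the paper's---namely that the factor $y(\omega)+1$ can be removed because $Q(0,y,z)$ is finite at $y=-1$---but the execution is organized differently. The paper argues locally on the torus: from Theorem~\ref{thm_continuation} it knows $(y+1)q_y$ is holomorphic on $\mathcal{P}$, so $q_y$ is meromorphic there with possible poles only at the zeros of $y(\omega)+1$, which by Lemma~\ref{some_particular_values} are $\omega_2/8$ and $5\omega_2/8$; near those points $q_y(\omega)=Q(0,y(\omega),z)$ is bounded (the series converges at $y=-1$), and a bounded meromorphic function has no pole. You instead argue globally: you invoke the remark after Theorem~\ref{Thm_continuation} to get a holomorphic continuation $\widehat{Q}_y$ of $Q(0,\cdot,z)$ on $\mathbb{C}\setminus[y_3,y_4]$, lift it to $\widehat{q}_y=\widehat{Q}_y\circ y$ (holomorphic on $\mathcal{P}$ because $y^{-1}([y_3,y_4])$ is exactly the excluded cycle), and then identify $q_y$ with $\widehat{q}_y$ via the identity theorem on the connected cut torus. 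Your route avoids locating the zeros of $y+1$ and invoking Lemma~\ref{some_particular_values}, at the modest cost of routing through the plane-level statement of Theorem~\ref{Thm_continuation} and its remark rather than working entirely on $\mathbb{C}/\Omega$.
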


\begin{proof}
From Theorem~\ref{thm_continuation}, we know that $(y+1)q_{y}$ is
holomorphic on $\mathbb{C}/\Omega \setminus
([0,\omega_{1}[+3\omega_{2}/8)$, so that we directly derive that $q_{y}$
is holomorphic on the same domain, except eventually at the points where $y+1=0$,
\text{i.e.}\ at $\omega_{2}/8$ and $5\omega_{2}/8$, see Lemma~\ref{some_particular_values}.
%Let us now to show that $q_{y}$ is
%holomorphic at these points, namely $\omega_{2}/8$ and
%$5\omega_{2}/8$, in accordance with Lemma
%\ref{some_particular_values}.
However, the generating function $Q(0,y,z)$ is bounded at $y=-1$,
see Section~\ref{reduc}, so that $q_{y}(\omega)=Q(0,y(\omega),z)$, being meromorphic
and bounded near $\omega_{2}/8$ and $5\omega_{2}/8$,
is actually holomorphic at both these points.
%We can also remark that with
%(\ref{continuation_functional_equation}), $(y(5\omega_{2}/8)
%+1)q_{y}(5\omega_{2}/8)=0$, since with Lemma
%\ref{some_particular_values}, $x(5\omega_{2}/8)=0$. Moreover, since
%$\phi(5\omega_{2}/8) =\omega_{2}/8$,
%$(y(\omega_{2}/8)+1)q_{y}(\omega_{2}/8)=0$. In other words, at
%$\omega=\omega_{2}/8$ and $\omega=5\omega_{2}/8$, both holomorphic
%functions $(y+1)q_{y}$ and $(y+1)$ have a zero, the first one of
%order equal or larger than one, the second one of order exactly one;
%it follows immediately that $q_{y}$ is holomorphic at these two
%values of $\omega$.
\end{proof}

The following lemma, which has been used in the proof of Theorem~\ref{thm_continuation},
easily follows from Lemma~\ref{Properties_X_Y_0} and from the fact that on the parallelogram
$[0,\omega_{2}[\times [0,\omega_{1}/\imath [$, the Weierstrass
elliptic function $\wp$ takes each value of
$\mathbb{C}\cup\{\infty\}$ twice.
     \begin{lem}
     \label{some_particular_values}
          The only poles of $x$ (of order one) are at $\omega_{2}/8,7\omega_{2}/8$
            and its only zeros (of order one) are at
          $3\omega_{2}/8,5\omega_{2}/8$. The only pole of $y$ (of order two)
           is at $3\omega_{2}/8$
          and its only zero (of order two) is at $7\omega_{2}/8$.
          The only zeros of $y+1$ are at $\omega_{2}/8,5\omega_{2}/8$.
     \end{lem}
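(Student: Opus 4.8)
The plan is to use that $\wp$ — and hence, via (\ref{uniformization}), also $x$ — takes every value of $\mathbb{C}\cup\{\infty\}$ exactly twice on $[0,\omega_{2}[\times[0,\omega_{1}/\imath[$, so that $x(\omega)$ is an elliptic function of degree two on $\mathbb{C}/\Omega$; the same is true of $y(\omega)$, since $\mathscr{L}$ is a two-sheeted cover of the $y$-line via $(x,y)\mapsto y$ (the two sheets being the branches $X_{0},X_{1}$ of $X$). Hence each of $x$ and $y$ has exactly two zeros and two poles, and $y+1$ exactly two zeros, all counted with multiplicity, and it suffices to exhibit them together with their orders.

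The first step is to locate, on the surface $\mathscr{L}$ itself, the points carrying these zeros and poles, by means of Lemma~\ref{Properties_X_Y_0}. Since $Y_{0}Y_{1}=c(x,z)/a(x,z)=1/x^{2}\to 0$ and $Y_{0}+Y_{1}=-b(x,z)/a(x,z)\to -1$ as $x\to\infty$, the two points of $\mathscr{L}$ over $x=\infty$ are $(\infty,0)$ and $(\infty,-1)$; since $\widetilde{c}(y,z)=z(y+1)$ vanishes only at $y=-1$ while $Y_{1}$ has a double pole at $x=0$ (Lemma~\ref{Properties_X_Y_0}), the two points over $x=0$ are $(0,-1)$ and $(0,\infty)$; finally, by Lemma~\ref{Properties_X_Y_0} both $X_{0}$ and $X_{1}$ become infinite at $y=y_{1}=0$ and vanish at $y=y_{4}=\infty$, and $X_{0}(-1)=0$, $X_{1}(-1)=\infty$, so the point over $y=0$ is $(\infty,0)$, the one over $y=\infty$ is $(0,\infty)$, and the two points over $y=-1$ are $(0,-1)$ and $(\infty,-1)$. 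Consequently $x$ has its poles at $(\infty,0),(\infty,-1)$ and its zeros at $(0,-1),(0,\infty)$; $y$ has its pole at $(0,\infty)$ and its zero at $(\infty,0)$; and $y+1$ vanishes at $(0,-1),(\infty,-1)$. The orders follow from the ramification: $(x,y)\mapsto x$ is branched on $\mathscr{L}$ exactly over $x_{1},\dots,x_{4}$, none of which is $0$ or $\infty$, so all zeros and poles of $x$ are simple; $(x,y)\mapsto y$ is branched exactly over $y_{1},\dots,y_{4}$, and because $y_{1}=0$ and $y_{4}=\infty$ this forces a double zero of $y$ at $(\infty,0)$ and a double pole of $y$ at $(0,\infty)$, whereas $-1\notin\{y_{1},\dots,y_{4}\}$ gives $y+1$ simple zeros.

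The second step is to compute the coordinates $\omega(\infty,0)$, $\omega(\infty,-1)$, $\omega(0,-1)$, $\omega(0,\infty)$ in the fundamental parallelogram. These four points have real $x$- and $y$-coordinates, hence lie on the real locus of $\mathscr{L}$, which is the union of the oval $[0,\omega_{2}[$ and the oval $[0,\omega_{2}[+\omega_{1}/2$; the latter covers only the range $x\in[x_{2},x_{3}]$, which contains neither $0$ nor $\infty$, so all four points lie on $[0,\omega_{2}[$. Next, $(\infty,0)$ and $(0,\infty)$ are ramification points of $y$, hence fixed points of $\phi(\omega)=-\omega+\omega_{3}$ from (\ref{expression_automorphisms_torus}); with $\omega_{3}=3\omega_{2}/4$ (Proposition~\ref{www}) and the reality of these points this leaves only $\{3\omega_{2}/8,\,7\omega_{2}/8\}$ for each. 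Since $x^{-1}(\{|x|\leq 1\})=[\omega_{2}/4,3\omega_{2}/4]\times[0,\omega_{1}/\imath[$ by Proposition~\ref{transformation_circles_uniformization}, the zero $(0,\infty)$ of $x$ lies in $[\omega_{2}/4,3\omega_{2}/4]$ but the pole $(\infty,0)$ does not, so $\omega(0,\infty)=3\omega_{2}/8$ and $\omega(\infty,0)=7\omega_{2}/8$. Finally $\psi(\omega)=-\omega$ exchanges the two points lying over a common value of $x$, whence $\omega(0,-1)=-\omega(0,\infty)\equiv 5\omega_{2}/8$ and $\omega(\infty,-1)=-\omega(\infty,0)\equiv\omega_{2}/8$. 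Inserting these four values into the list of zeros and poles obtained in the first step yields exactly the statement.

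The only points calling for any care are keeping track of the orders (that is, of which among $0$, $-1$, $\infty$ is a branch point of $X$ or of $Y$) and the remark that the four relevant points lie on the oval $[0,\omega_{2}[$ rather than on $[0,\omega_{2}[+\omega_{1}/2$; both follow at once from the orderings $0<x_{1}<x_{2}<1<x_{3}<x_{4}<\infty$ and $0=y_{1}<y_{2}<1<y_{3}<y_{4}=\infty$. The rest is routine bookkeeping with the degree-two property, Lemma~\ref{Properties_X_Y_0}, and the automorphisms $\psi$ and $\phi$.
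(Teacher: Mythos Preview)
Your proof is correct and follows the same approach the paper indicates: the paper's entire proof is the single sentence that the lemma ``easily follows from Lemma~\ref{Properties_X_Y_0} and from the fact that on the parallelogram $[0,\omega_{2}[\times[0,\omega_{1}/\imath[$, the Weierstrass elliptic function $\wp$ takes each value of $\mathbb{C}\cup\{\infty\}$ twice.'' You use exactly these two ingredients, and you additionally invoke Proposition~\ref{omega_3=(3/4)omega_2}, the automorphisms $\psi,\phi$ of (\ref{expression_automorphisms_torus}), and Proposition~\ref{transformation_circles_uniformization} to carry out the bookkeeping that the paper leaves implicit---namely, distinguishing which of the two $\phi$-fixed points on the real oval carries $(0,\infty)$ versus $(\infty,0)$. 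This is a natural and efficient way to make the paper's sketch precise, and there is no circularity since all the results you cite are established earlier in the paper and independently of Lemma~\ref{some_particular_values}.
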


     \noindent{\bf Acknowledgments.} We thank Philippe Bougerol for
drawing our attention to recent combinatorics problems and results.
Many thanks too to Amaury Lambert for suggesting we study Mireille
Bousquet-M\'elou's articles. We also thank Alexis Chommeloux and J\'er\'emie Lumbroso for their
numerous remarks about the English. Finally, we must thank an
anonymous referee for his/her very careful reading and his/her
valuable comments and suggestions.

\footnotesize

\end{document}